\def\be#1\ee{\begin{equation}#1\end{equation}}
\newtheorem{thm}{Theorem}
\newtheorem{lem}[thm]{Lemma}
\newtheorem{prop}[thm]{Proposition}
\newtheorem{cor}[thm]{Corollary}
\newtheorem{rem}[thm]{Remark}
\DeclareMathOperator{\esssup}{esssup}
\def\P{{\mathbb{P}}}
\def\R{\mathbb{R}}
\def\E{\mathbb{E}\,}
\def\N{{\mathbb N}}
\def\dd{\mbox{d}}
\newenvironment{proof}[1][] {\noindent {\bf Proof#1:} }{\hspace*{\fill}$\square$\medskip\par}
\newcommand{\ind}{1\hspace{-0.098cm}\mathrm{l}}
\def\ed#1{ {\mathbf 1}_{ \{#1  \}}}             % indicator
\def\CC{{\mathcal C}}
\def\FF{{\mathcal F}}
\newcommand{\eps}{\varepsilon}
\def\UU{{\mathcal{U}}}
\def\bark{\overline{k}}
\def\di{{\mathfrak{D}}}
\def\txi{{\widetilde{\xi}}}
\def\tR{{\widetilde{R}}}
\def\Vol{\textrm{vol}_d}
\def\Volone{\textrm{vol}_1}
\def\dalpha{\bar\alpha}
\def\KKK{\mathcal{K}}
\let\BFseries\bfseries\def\bfseries{\BFseries\mathversion{bold}} %formulas in headings bold
\title{How complex is a random picture?}
\author{Frank Aurzada and Mikhail Lifshits}
\begin{document}

\maketitle

% \tableofcontents

\begin{abstract}
  We study the amount of information that is contained in ``random pictures'', by which
  we mean the sample sets of a Boolean model. To quantify the notion ``amount of information'', two closely connected questions are investigated: on the one hand, we study the probability  that a large number of balls is needed for a full reconstruction of a Boolean model sample
  set. On the other hand, we study the quantization error of the Boolean model w.r.t.\ the
  Hausdorff distance as a distortion measure.
\end{abstract}

\noindent {\bf Keywords:} Boolean model; functional quantization; high resolution quantization; information based complexity; metric entropy.

\noindent {\bf 2010 Mathematics Subject Classification:} 94A29, 60D05, secondary: 52A22. 

% \tableofcontents

\section{Introduction and results}
\subsection{Introduction}
We are interested in quantifying the amount of information contained in certain random
pictures. Let us first fix some notation. We work
in dimension $d\geq 1$. Let $(\xi_i)_{i\geq 1}$ be i.i.d.\ random variables uniformly
distributed in $[0,1]^d$, $(R_i)_{i\geq 1}$ be i.i.d.\ positive random varibles, and
let $N$ be a Poisson random variable with parameter $\lambda$. Assume that $(\xi_i)$, $(R_i)$,
and $N$ are independent.
%% Notice that the set  $(\xi_i)_{i \leq N}$ is just a Poisson point field controlled
%%by the measure $\lambda \Vol$ where $\Vol$ stands for the $d$-dimensional Lebesgue measure.

Define a ``random picture'' by
$$
   S:= \bigcup_{i=1}^N B(\xi_i,R_i)\cap [0,1]^d.
$$
Here, $B(x,r)$ is a ball with centre $x$ and radius $r$, where for the time being any norm
$||.||$ on $\R^d$ is fine. The set $S$ is a union of balls in $[0,1]^d$; imagine the
balls are painted `black', thus we have a black picture over white background.

We are interested in the (lossy) encoding of the picture $S$ by a finite number of bits. This
problem, the well-known {\it quantization problem}, will be described below. It turns out
that for the analysis of the quantization problem, the following random variable is crucial.
Moreover, we believe that it may be of independent interest. Define the
{\it effective number of balls visible in the picture} as
$$
  K:= \min\{ r\geq 1 | \exists \text{$i_1,\ldots, i_r \in \{1,\ldots,N\}$} :
  S = \bigcup_{s=1}^r B(\xi_{i_s},R_{i_s})\cap[0,1]^d \}.
$$
In other words, with $K$ balls one can reproduce the black picture $S$ exactly as with the
original $N$ balls.

We are interested in the upper tail of $K$, i.e.\ $\P[K\geq n]$ when $n\to\infty$. This means
we study the probability that one needs many balls in order to reconstruct the picture $S$.
In particular, we would like to understand when one can ``save balls'' w.r.t.\ the original
Poisson number of balls $N$. To make this more precise, note that clearly $K\leq N$, and so
\[
   \P[K\geq n]\leq \P[N\geq n] = \exp( -n\log n \cdot (1+o(1))),\qquad n\to\infty.
\]
We would like to show that the upper tail of $K$ is thinner, i.e.\ for some $a>1$
$$
   \P[ K\geq n] = \exp\left( - a \cdot n\log n \cdot  (1+o(1))\right),\qquad n\to\infty.
$$
It turns out that this question is non-trivial and interesting. The answer depends on
the dimension $d$, on the type of norm used, and on the distribution of the radii
$\mathcal{L}(R_1)$.

Boolean models are fundamental objects in stochastic geometry and have a large range of applications, \cite{sg1,sg2}. However, to the knowledge of the authors, until recently mostly the {\it average of observables} of Boolean models are studied. Often this plays a role when estimating parameters of the model in applications.
On the contrary, the present paper deals with {\it rare events}, i.e.\  with large
deviation probabilities.

As mentioned above, the upper tail of the random variable $K$ is an essential ingredient
for solving the so-called quantization problem, which we recall now. Let an arbitrary
norm $||.||$ be fixed on $\R^d$. Let $d_H$ denote the corresponding Hausdorff distance
between the closed subsets of $\R^d$. We define the respective quantization error for
pictures by
$$
    D^{(q)}(r):=\inf_{\#\CC\leq e^r} \E \min_{A\in\CC} d_H(S,A),\qquad r>0.
$$

Here, the sets $\CC$ are called codebooks and the upper index $(q)$ stands for
``quantization''. The idea is that the ``analog'' signal $S$
should be encoded by an element $A\in\CC$. This incurs an error, $d_H(A,S)$, measured in
Hausdorff distance. Losely speaking, $D^{(q)}$ is then the minimal average error over all
codebooks $\CC$ of a size not exceeding $e^r$. We are interested in letting $r\to\infty$,
that is, the size of the codebooks grows; and we would like to understand the rate
of decay of the corresponding quantization error.

Basic references for quantization problems are \cite{lp,ct,kolmogorov}. The analysis
of the quantization started in the 40ies of the 20th century, and mainly finite-dimensional
quantization was the subject of interest until about 2000. Since then, research has shifted
to infinite dimensional quantization, e.g.\ for Brownian motion, fractional Brownian motion,
L\'evy processes, etc.\ attaining values in some function spaces, therefore called functional quantiation, see e.g.\ \cite{dereichscheutzow,creutzigetal,der1,der2,lg1,muellergronbachritter,c1,glp1,altmayeretal} and references therein for a selection, and their applications to numerical probability, see e.g.\ \cite{pagessurvey}. In the present paper, the
signal attains values in a more abstract space, namely, in the class of all compact subsets of $[0,1]^d$.

Results similar to those in this paper are obtained in \cite{vormoor,vormoorphd}. In \cite{vormoor}, certain types of jump processes are studied that resemble (and contain a as special case) compound Poisson processes with values in an abstract space. In the PhD thesis \cite{vormoorphd}, the question we are interested in in this paper appears for the first time.  We comment on the particular relation to the present results below.

The rest of this paper is structured as follows. In Section~\ref{sec:resultstail}, we summarize
our results for the large deviation probabilities of $K$. In Section~\ref{sec:resultsquantization},
the results for the quantization error are listed. The proofs are given
in Section~\ref{sec:proofsldlower} (lower bounds for the large deviation results),
Section~\ref{sec:proofsldupper} (upper bounds for the large deviation results),
Section~\ref{sec:dimension1} (dimension $d=1$),
and Section~\ref{sec:quantization} (quantization results), respectively.

\subsection{Results for the large deviations of $K$} \label{sec:resultstail}
\paragraph*{Dimension $d=1$.} Let us start with the case $d=1$, which is particularly easy.

In $\R^1$ there is essentially one norm, thus we will work with absolute values.
The balls here are just intervals, $B(x,r)=[x-r,x+r]$. The proof of the following result is given in Section~\ref{sec:quantization}.

\begin{thm}  \label{thm:d1} Let $d=1$.
Assume that the distribution of $R_1$ has a probability density $p$ with
$p(z) \approx z^{\alpha-1}$ for $z\to 0$ and some $\alpha>0$. Then
\be \label{eqn:d1}
    \P[ K \geq n ] = \exp( - (1+\alpha) n \log n \cdot (1+o(1)) ),
    \qquad \text{as $n\to\infty$.}
\ee
\end{thm}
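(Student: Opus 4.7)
The target is a pair of matching upper and lower bounds on $\log\P[K\geq n]$, both of order $-(1+\alpha) n\log n(1+o(1))$. For the \emph{lower bound} I would exhibit an explicit event on which $K=n$, namely
$\{N=n\}\cap\{R_i\in[c/n,2c/n]\text{ for every }i\}\cap\{\text{all consecutive spacings of the }\xi_i\text{ are at least }4c/n\}$. On this event the $n$ capped intervals $B(\xi_i,R_i)\cap[0,1]$ are pairwise disjoint, each is its own connected component of $S$, and $K=n$. The three factors contribute $e^{-n\log n(1+o(1))}$ (Stirling for $\P[N=n]=e^{-\lambda}\lambda^n/n!$), $(\Theta(n^{-\alpha}))^n=e^{-\alpha n\log n(1+o(1))}$ (from $p(z)\sim z^{\alpha-1}$), and a factor $\geq e^{-O(n)}$ (the classical formula for uniform spacings, for $c$ small enough). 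Multiplying yields the lower bound.

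For the \emph{upper bound}, the core is a deterministic fact about one-dimensional minimal covers. Write $L_i:=|B(\xi_i,R_i)\cap[0,1]|$ for the capped length. Pick any minimal subfamily $I_{j_1},\ldots,I_{j_K}$ with $\bigcup_s I_{j_s}=S$ and sort it by left endpoint inside each connected component of $S$. Minimality rules out both $I_{j_{s+1}}\subseteq I_{j_s}$ and $I_{j_{s+1}}\subseteq I_{j_s}\cup I_{j_{s+2}}$; the latter forces $a_{j_{s+2}}>b_{j_s}$, so inside each component both the odd-indexed and the even-indexed cover intervals form pairwise disjoint subfamilies. Summing the lengths of either subfamily across components yields
\[
    \sum_{s=1}^{K} L_{j_s}\ \leq\ 2|S|\ \leq\ 2 .
\]
For any $\delta\in(0,1)$, Markov's inequality gives at most $\delta n$ cover intervals with $L_{j_s}>2/(\delta n)$, so on $\{K\geq n\}$ at least $(1-\delta)n$ of the cover intervals---and hence of the full model balls---satisfy $L_i\leq 2/(\delta n)$.

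Finally let $N_\eps:=\#\{i\leq N:L_i\leq\eps\}$; by Poisson thinning $N_\eps$ is Poisson with mean $\mu_\eps=\lambda\P[L_1\leq\eps]\sim c_0\eps^\alpha$ as $\eps\downarrow 0$ (a short calculation from $p(z)\sim z^{\alpha-1}$; the interior contribution dominates the boundary one). The structural step gives $\P[K\geq n]\leq\P[N_{2/(\delta_n n)}\geq(1-\delta_n)n]$ for any $\delta_n\in(0,1)$, and the Chernoff/Stirling estimate $\P[\mathrm{Poisson}(\mu)\geq k]\leq(e\mu/k)^k$ bounds this by
\[
    \exp\Bigl(-(1-\delta_n)n\bigl[(1+\alpha)\log n+\alpha\log\delta_n+O(1)\bigr]\Bigr).
\]
With $\delta_n=1/\log n$ one has $1-\delta_n\to 1$ and $\alpha\log\delta_n=-\alpha\log\log n=o(\log n)$, so the exponent equals $-(1+\alpha)n\log n(1+o(1))$ and matches the lower bound. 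The main obstacle is the structural inequality $\sum_s L_{j_s}\leq 2$: without it, Markov applied to just a disjoint subfamily would control only $K/2$ small balls and produce the weaker constant $(1+\alpha)/2$. Everything else is routine bookkeeping with Stirling and the calibration of $\delta_n$.
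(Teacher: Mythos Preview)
Your proposal is correct and follows essentially the same route as the paper. The lower bound via $n$ well-separated tiny balls is the $d=1$ case of the paper's Proposition~\ref{prop:lowerSB} (the paper uses a box partition instead of uniform spacings, a cosmetic difference), and your structural inequality $\sum_s L_{j_s}\le 2$ is exactly the content of the paper's Lemma~\ref{lem:length} (proved there by ``each point is covered at most twice'' rather than the odd/even split, but equivalent); the subsequent split into large and small intervals followed by a Poisson tail bound, with $\delta_n\to 0$ in place of the paper's $M\to\infty$, is the same argument.
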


Here and below, the notion $p(z)\approx q(z)$ ($z\to 0$)  stands for the fact that $p(z)/q(z)$ is bounded away from zero and infinity for $z$ small enough. Likewise, we use $p(z)\sim q(z)$ if $\lim p(z)/q(z) = 1$.

For the case of constant radius, the large deviations turn out to be trivial, which is
quite natural. Namely, if $R_1\equiv c<1$ is constant, Remark~\ref{rem:empty} below shows
that $\P[ K \geq n ] = 0$, for $n>2/c$.

From now on, we assume $d\geq 2$.

\paragraph*{Constant radius.} Let us now deal with the seemingly simple case of constant radii. It turns out that the rates (and the proofs) are non-trivial and may possibly depend on the geometry of the balls.

\begin{thm}  \label{thm:constantgeneral} Assume $R_1\equiv c<1$ is constant. Then
\begin{enumerate}
\item for $\ell_1$-balls, we have
$$
\P[K\geq n] = \exp\left( - (1+\frac{1}{d-1}) n \log n \cdot (1+o(1)) \right),
$$
\item for $\ell_2$-balls, we have
\begin{eqnarray*}
&& \exp\left( - (1+\frac{2}{d-1}) n \log n \cdot (1+o(1)) \right)
\\
&\leq& \P[K\geq n] \leq \exp\left( - (1+\frac{1}{d-1}) n \log n \cdot (1+o(1)) \right),
\end{eqnarray*}
\item for $\ell_\infty$-balls, we have
$$
\exp\left( - (1+\frac{1}{d-1}) n \log n \cdot (1+o(1)) \right) \leq \P[K\geq n].
$$
\end{enumerate}
\end{thm}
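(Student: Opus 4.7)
The plan is to decompose
\begin{equation*}
\P[K\geq n]=\sum_{m\geq n}e^{-\lambda}\frac{\lambda^m}{m!}\,\P[K\geq n\mid N=m]
\end{equation*}
and to show that the sum is dominated by $m\approx n$, where the Poisson weight $e^{-\lambda}\lambda^n/n!=\exp(-n\log n(1+o(1)))$ supplies the ``$1$'' in the claimed coefficient $(1+\beta)$. All of the variation between the three parts of the theorem therefore sits in the geometric non-redundancy factor
\begin{equation*}
q_n:=\P[K=n\mid N=n]=\P[\text{every $B(\xi_i,c)$ is non-redundant}\mid N=n].
\end{equation*}
The working criterion is that $B(\xi_i,c)$ is non-redundant iff there exists an \emph{exclusive point} $p_i\in B(\xi_i,c)$ with $\|p_i-\xi_j\|>c$ for every $j\neq i$; this reduces everything to the mutual Voronoi-type structure of the centres in the chosen norm.

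For the lower bounds (parts 1 and 3, and the lower half of part 2), the strategy is constructive. I would exhibit a region $A_n\subset[0,1]^d$ of $d$-dimensional volume $\asymp n^{-\beta}$ (with $\beta=1/(d-1)$ for $\ell_1,\ell_\infty$ and $\beta=2/(d-1)$ for $\ell_2$) such that if $N=n$ and all centres fall into $A_n$ then, with conditional probability at least some $c_0>0$ independent of $n$, every ball is non-redundant; this yields
\begin{equation*}
q_n\;\geq\;c_0\,|A_n|^n\;=\;\exp\bigl(-\beta\, n\log n(1+o(1))\bigr),
\end{equation*}
and multiplying by $\P[N=n]$ gives the claimed lower bound. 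For the polyhedral norms $\ell_1$ and $\ell_\infty$ the key idea is to exploit the finite vertex set of the unit ball: place the centres in $A_n$ close enough to a suitable $(d-1)$-dimensional substructure so that each ball can claim one of its $2d$ (resp.\ $2^d$) vertices $\xi_i+c\sigma_i$ as an exclusive point, and show that the vertex assignments $\sigma_i$ can be made pairwise compatible for a positive fraction of configurations. For $\ell_2$, the lack of vertices means the exclusive cap of $B(\xi_i,c)$ must point in a particular direction on the sphere; controlling this costs an extra factor $n^{-1/(d-1)}$ of centre-volume per ball, producing the weaker exponent $2/(d-1)$.

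For the upper bounds (parts 1 and 2) the plan is an entropy/packing argument. Given $K\geq n$, select exclusive points $p_1,\dots,p_n$ and discretise the pairs $(\xi_i,p_i)$ at a scale $\delta_n=n^{-O(1)}$ to obtain a combinatorial \emph{type}; a crude count bounds the number of types by $\exp(O(n\log n))$. The crucial ingredient is a \emph{volume-reduction lemma}: within each type, the constraints $\|p_i-\xi_j\|>c$ for $i\neq j$ force the $n$ non-redundant centres into a region of $d$-dimensional Lebesgue volume $\lesssim n^{-1/(d-1)}(\log n)^{O(1)}$. Summing over types and integrating over the remaining (redundant) balls then yields the matching upper bound $\exp(-(1+1/(d-1))n\log n(1+o(1)))$. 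For $\ell_1$ this lemma relies on the bounded facet structure of the cross-polytope, and for $\ell_2$ on the strict convexity of the Euclidean ball.

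The hard part is the volume-reduction lemma. For $\ell_\infty$-cubes it breaks down: flat faces allow mutually non-redundant configurations whose centres remain spread across $[0,1]^d$, and this is precisely why part 3 contains no upper bound. For $\ell_2$-balls, the standard strict-convexity tools appear to give only $n^{-1/(d-1)}$ from above, whereas the best construction for the lower bound gives volume $n^{-2/(d-1)}$, producing the factor-two gap in part 2; closing it would require sharper quantitative spherical-packing estimates that go beyond the methods sketched here.
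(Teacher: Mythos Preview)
Your lower-bound sketch is close in spirit to the paper's, but the paper's construction is cleaner and avoids the probabilistic step you describe. Rather than a single region $A_n$ with a ``positive fraction $c_0$'' of non-redundant configurations, the paper takes $n$ \emph{disjoint} sub-boxes $V_1,\dots,V_n$ (arranged in a thin slab of thickness $\sim n^{-1/(d-1)}$ for $\ell_1$, thickness $\sim n^{-2/(d-1)}$ for $\ell_2$, and around the diagonal hyperplane $\sum_m x^{(m)}=\text{const}$ for $\ell_\infty$) and works on the event $\{N=n,\ \xi_i\in V_{\pi(i)}\}$. On that event every ball has an exclusive point \emph{deterministically}, and the same direction is used for all balls: $x_i=\xi_i+(0,\dots,0,R_1)$ for $\ell_1,\ell_2$, and the single vertex $x_i=\xi_i-(R_1,\dots,R_1)$ for $\ell_\infty$. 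There are no variable assignments $\sigma_i$ to make ``pairwise compatible''; the $n!$ from permutations cancels the $1/n!$ in the Poisson weight exactly. Your version could be repaired (a factor $e^{-O(n)}$ in place of a constant $c_0$ is harmless for the logarithmic asymptotic), but as stated the claim that $c_0$ is independent of $n$ is unjustified.

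The real gap is the upper bound. Discretising the pairs $(\xi_i,p_i)$ at a polynomial scale produces $\exp(\Theta(n\log n))$ types, which is the \emph{same} order as the main term; your ``volume-reduction lemma'' would then have to give a per-type bound of $\exp(-(1+\tfrac{1}{d-1}+\eps)n\log n)$ for arbitrary $\eps>0$, and you give no mechanism for that. The paper sidesteps this entirely. It first groups the balls into only $2d$ classes $J_m^{\pm}$ according to the dominant coordinate and sign of $\Delta_i=\nu_i-\theta_i$ (plus a bounded class $J_0$). The key geometric lemma then shows, for any $i,j$ in the same class, that
\[
|\theta_i^{(d)}-\theta_j^{(d)}|\ \notin\ \bigl[\,c'\,\|\sigma\theta_i-\sigma\theta_j\|,\ c_3\,\bigr],
\]
where $\sigma$ is projection onto the first $d-1$ coordinates (with $c'=1$ for $\ell_1$ and an explicit constant for $\ell_2$). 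This dichotomy forces all non-redundant centres into some union $U$ of $O(n/A^{d-1})$ cubes of side $An^{-1/(d-1)}$, hence of total volume $O(An^{-1/(d-1)})$. The union bound now runs only over the choices of $U$, of which there are $\exp\bigl(O(n/A^{d-1})\log n\bigr)$, and this is made $\exp(o(n\log n))$ by sending $A\to\infty$ at the end. That two-level structure---coarse grouping by direction, then a coordinate-dichotomy lemma that controls the combinatorics---is the content your ``volume-reduction lemma'' is hiding.
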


These results are proved in Section~\ref{sec:constantlower} (lower bounds) and
Section~\ref{sec:constantupper} (upper bounds).

\paragraph*{Radius distribution with density.} Now we deal with a radius distribution that
has a probability density. The result here is a summary of the results
in Sections~\ref{sec:gen1}, \ref{sec:gen2} (lower bounds) and \ref{sec:gen3} (upper bounds),
where slightly more general results are stated and proved.

\begin{thm}  \label{thm:genericgeneral}
Assume that the radius distribution has a probability density $p$
with $p(z)\approx z^{\alpha-1}$, for $z\to 0$. Set $\dalpha:=\alpha\wedge 1$.
Furthermore, assume that $p$ is bounded. Then for any norm on $\R^d$
\begin{eqnarray*}
&& \exp\left( - \left(1+\frac{\alpha}{d}\right) n \log n \cdot (1+o(1)) \right)
\\
&\leq& \P[K\geq n] \leq \exp\left( - \left(1+\frac{\dalpha}{d}\right) n \log n \cdot (1+o(1)) \right).
\end{eqnarray*}

Additionally, for $\ell_1$-balls,
$$
\exp\left( - (1+\frac{1}{d-1}) n \log n \cdot (1+o(1)) \right) \leq \P[K\geq n]
$$
and for $\ell_2$-balls,
$$
\exp\left( - (1+\frac{2}{d-1}) n \log n \cdot (1+o(1)) \right) \leq \P[K\geq n].
$$
\end{thm}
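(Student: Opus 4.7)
The statement bundles a general lower bound, specialised lower bounds for the $\ell_1$- and $\ell_2$-norms, and a general upper bound, so my plan is to address each of them separately, using Theorem~\ref{thm:constantgeneral} for the norm-specific refinements.

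\textbf{Generic lower bound.} The plan is to force $K=n$ by exhibiting $n$ pairwise disjoint tiny balls. Set $r_n:=\eta n^{-1/d}$ for a small constant $\eta>0$, and consider the event that $N=n$, that each of $n$ prescribed sub-cubes of side $2r_n$ of $[0,1]^d$ contains exactly one centre $\xi_i$, and that every radius satisfies $R_i\leq r_n$. On this event any two centres are separated by more than $2r_n$ while all radii are at most $r_n$, so the balls are pairwise disjoint, each is essential and $K=n$. With $F(r):=\P[R_1\leq r]$, the probability of this event equals $e^{-\lambda}\lambda^n(2\eta)^{dn}n^{-n}F(r_n)^n$; using $F(r_n)\approx r_n^\alpha$ its logarithm works out to $-(1+\alpha/d)n\log n\,(1+o(1))$, as required.

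\textbf{Lower bounds for $\ell_1$- and $\ell_2$-balls.} These follow by reducing to the constant-radius constructions of Theorem~\ref{thm:constantgeneral}. Pick $c>0$ with $p(c)>0$ and a small $\eps_0>0$; on the event that the $n$ balls entering the constant-radius construction have radii in $[c,c+\eps_0]$, the configuration is sandwiched between two genuine constant-radius models (of radius $c$ and $c+\eps_0$), and the geometric arguments behind parts~(1) and~(2) of Theorem~\ref{thm:constantgeneral} go through without change by continuity of every covering relation in the radius parameter. Imposing the radius-window requirement multiplies the probability by an extra factor $(\int_c^{c+\eps_0}p)^n=e^{-O(n)}$, which is absorbed into the $(1+o(1))$ prefactor in front of $n\log n$.

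\textbf{Generic upper bound.} This is the substantive part of the statement. The plan is to show that $K\geq n$ forces many balls to have small radius and then apply a Poisson tail estimate. With $r_n:=Cn^{-1/d}$ for a large constant $C$ and $M_r:=\#\{i\leq N:R_i\leq r\}$, one aims at the implication $\{K\geq n\}\subseteq\{M_{r_n}\geq(1-o(1))n\}$, valid up to events of sufficiently small probability. Granting this, the Poisson tail for $M_{r_n}$ --- whose mean $\lambda F(r_n)$ satisfies $F(r_n)\leq C'r_n^{\bar\alpha}$ by the \emph{uniform} bound $F(r)\leq C'r^{\bar\alpha}$ for all $r>0$ (a consequence of the local behaviour $p(z)\approx z^{\alpha-1}$ near $0$ together with the boundedness of $p$) --- gives
\be
\P\bigl[M_{r_n}\geq(1-o(1))n\bigr]\leq\exp\bigl(-(1+\bar\alpha/d)\,n\log n\,(1+o(1))\bigr),
\ee
which is exactly the announced bound. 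It is precisely the passage from the pointwise to the uniform bound on $F$ that replaces $\alpha$ by $\bar\alpha=\alpha\wedge 1$ in the exponent.

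The main obstacle is the structural implication $K\geq n\Rightarrow M_{r_n}\gtrsim n$. A direct volume argument based on the disjointness of the exclusive regions $B_i\setminus\bigcup_{j\neq i}B_j$ fails because these regions may be thin slivers of volume much smaller than $R_i^d$. The right approach associates to each essential $B_i$ a witness point $w_i\in B_i\setminus\bigcup_{j\neq i}B_j$ and uses the separation $\|w_i-\xi_j\|>R_j$ (for $j\neq i$) to run a refined packing argument, controlling how many essential balls of radius exceeding $r_n$ can coexist in $[0,1]^d$. Making this quantitative is the technical heart of the proof in Section~\ref{sec:gen3}.
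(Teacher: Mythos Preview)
Your generic lower bound is essentially the paper's Proposition~\ref{prop:lowerSB}, so that part is fine. The other two pieces, however, do not work as written.

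\textbf{The $\ell_1$/$\ell_2$ lower bounds.} Your reduction to Theorem~\ref{thm:constantgeneral} via a fixed radius window $[c,c+\eps_0]$ breaks down. In the constant-radius construction the margin by which the witness point $x_i=\xi_i+(0,\dots,0,R_i)$ escapes the ball $B(\xi_j,R_j)$ is only of order $n^{-1/(d-1)}$ (for $\ell_1$) or $n^{-2/(d-1)}$ (for $\ell_2$). If the radii are allowed to differ by a fixed $\eps_0>0$, then $R_j$ may exceed $R_i$ by $\eps_0$ and the inequality $\|x_i-\xi_j\|>R_j$ fails for large $n$. Shrinking $\eps_0$ to order $n^{-1/(d-1)}$ (resp.\ $n^{-2/(d-1)}$) restores the geometry but costs an extra factor $(\eps_0)^n$ in probability, adding $\tfrac{1}{d-1}$ (resp.\ $\tfrac{2}{d-1}$) to the exponent and spoiling the claimed rate. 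The paper avoids this via Proposition~\ref{prop:LBsurface}: instead of pinning the radius, it lets $\xi_i^{(d)}$ range over a set $Z(\beta_n)$ of \emph{constant} Lebesgue measure (produced by Lemma~\ref{lem:concentrationfctn2}) and only forces the sum $\xi_i^{(d)}+R_i$ into a band of width $\beta_n$. The probability cost is then $\beta_n^n$ alone, which gives exactly the stated exponents.

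\textbf{The generic upper bound.} What you outline --- showing that on $\{K\ge n\}$ almost all essential balls have radius $\le Cn^{-1/d}$ via a witness-point packing argument --- is \emph{not} what Section~\ref{sec:gen3} does, and it is unclear how to carry it out for a general norm. In $d\ge 2$ there is no deterministic bound on the number of essential balls of radius $>r$ (the constant-radius lower bounds show $n$ such balls can coexist), so one would need a large-deviation estimate for that count, which for the paper is already a separate, norm-dependent result (Propositions~\ref{prop:upperc1}--\ref{prop:upperc2}, proved only for $\ell_1$ and $\ell_2$). The paper's actual argument (Proposition~\ref{prop:tailofK}) is entirely different: it partitions $[0,1]^d$ into $n$ boxes and builds a collection sequentially, so that a new ball enlarges the collection only when its radius falls within $\mathfrak D\,n^{-1/d}$ of the current maximum in its box. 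The concentration-function hypothesis $\sup_x\P[R_1\in[x,x+r]]\le cr^{\bar\alpha}$ then bounds each such conditional probability by $\kappa n^{-\bar\alpha/d}$, and an exponential-martingale computation yields the stated tail. This is precisely where $\bar\alpha=\alpha\wedge 1$ enters: it is the best exponent available for the concentration function, not for $F(r)=\P[R_1\le r]$.
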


%\begin{rem}
%We remark here that the techniques in this paper can also be applied to other distributions of $N$. However, the case when $\P[ N\geq n ] = e^{-n\log n\cdot (1+o(1))}$ is in some sense the most interesting one. If, for example, $\P[ N\geq n ] \leq e^{-c n^{1+\gamma}}$ with some $\gamma>0$, one gets ...
%
%...so that the asymptotics of $\P[K\geq n]$ is governed by the asymptotics of $\P[N\geq n]$.
%
%On the other hand, for example for $\P[N \geq n] \approx e^{-c n}$, one obtains ... Therefore, the case  $\P[ N\geq n ] = e^{-n\log n\cdot (1+o(1))}$ is the most interesting one where one sees a non-trivial interplay of effects.
%\end{rem}

\subsection{Results for the quantization error}\label{sec:resultsquantization}

\paragraph*{Dimension $d=1$.} Also here we start with dimension $d=1$. First, we treat the case of constant radius.

\begin{thm} \label{thm:d1quantization}
Let $d=1$. Assume that the radius is constant $R_1\equiv c<1/2$. Then for constants $c_1,c_2>0$ and large enough $r$ we have
\begin{equation}
c_1 e^{-r/2m}\le    D^{(q)}(r) \le c_2 e^{-r/2m}, \label{d1c}
\end{equation}
where $m:=\max\{k\in\N : 2kc<1\}$. For $c\geq 1/2$, the statement holds for $m=1/2$.
\end{thm}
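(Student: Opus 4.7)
The proof splits into the matching lower and upper bounds in \eqref{d1c}.

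\emph{Lower bound.} My plan is to isolate a positive-probability sub-event on which the law of $S$ carries a $(2m)$-dimensional absolutely continuous component, and then apply a classical Zador--Gersho-type entropy estimate. Concretely, choose pairwise disjoint windows $W_1,\ldots,W_m\subset(c,1-c)$ of length just under $2c$, placed so that their $c$-neighbourhoods are still mutually disjoint and contained in $(0,1)$. Let $E$ be the event $\{N=2m$ and exactly two centres $\xi_i$ fall in each $W_j\}$. Then $\P(E)>0$ by construction, and on $E$ every pair of centres in a $W_j$ overlaps and produces a single interior component $I_j=[a_j,b_j]$ disjoint from the others, so $S=\bigsqcup_{j=1}^m I_j$. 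The vector $(a_1,b_1,\ldots,a_m,b_m)$ then admits a density bounded below on a $(2m)$-dimensional box, while $d_H(S,S')$ is equivalent, up to a bounded factor, to the $\ell^\infty$-distance of the endpoint vectors. A standard volumetric covering estimate shows that any codebook of cardinality $e^r$ has average Hausdorff error at least a constant times $e^{-r/(2m)}$ on $E$, and multiplying by $\P(E)$ yields $D^{(q)}(r)\ge c_1 e^{-r/(2m)}$.

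\emph{Upper bound.} Set $\eps:=e^{-r/(2m)}$ and place a grid of mesh $\eps$ on $[0,1]$. For each admissible structural pattern of $S$ (which components touch $\{0,1\}$, and how many interior components are present) include in $\CC$ every union of closed intervals with endpoints on the $\eps$-grid that is consistent with that pattern, together with one catch-all entry. By Remark~\ref{rem:empty} the number of admissible patterns is $O(1)$, and within each pattern the shape of $S$ is controlled by at most $2m$ free endpoint coordinates, so $\#\CC\le C\eps^{-2m}=O(e^r)$. For a typical realisation of $S$, rounding each free endpoint to the nearest grid point produces $A\in\CC$ of the same type with $d_H(S,A)\le \eps$. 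For atypical realisations the error is bounded by the diameter of $[0,1]$, but the probability of such events decays at least as fast as $\P(N\ge n)=\exp(-n\log n(1+o(1)))$, so their contribution to $\E\min_{A\in\CC}d_H(S,A)$ is $o(e^{-r/(2m)})$.

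\emph{Case $c\ge 1/2$.} Here every ball contains the mid-point $\tfrac{1}{2}$, so $S$ is almost surely a single closed sub-interval of $[0,1]$ containing $\tfrac{1}{2}$, and its shape is determined by one real coordinate (the unique endpoint not pinned to $\{0,1\}$). This corresponds to effective dimension $1=2m$ with $m=\tfrac{1}{2}$ and gives $D^{(q)}(r)\asymp e^{-r}$.

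\emph{Main obstacle.} The delicate step is the bookkeeping in the upper bound: one has to enumerate all admissible structural patterns, show that within each pattern the configuration has at most $2m$ truly free coordinates, and verify that the rounding operation preserves the pattern on a set of overwhelming probability, since any structural mismatch would give an $O(1)$ Hausdorff error that cannot be absorbed into the $e^{-r/(2m)}$ budget. Once this is in place, both the grid construction and the matching entropy lower bound are standard.
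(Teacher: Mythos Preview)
Your plan matches the paper's proof in both halves. For the lower bound the paper does exactly what you outline: it fixes $m$ well-separated target points $z_1,\ldots,z_m$, conditions on $\{N=2m\}$ with one centre in $[z_k-\delta,z_k]$ and one in $[z_k,z_k+\delta]$ for each $k$, obtains $m$ interior components whose $2m$ endpoints are independent uniforms, bounds $d_H(S,C)$ from below by a constant times the $\ell_\infty$-distance of the endpoint vector to a deterministic $b(C)\in\R^{2m}$, and quotes the cube quantization bound.

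For the upper bound the paper is noticeably more direct than your sketch: it takes the single codebook
\[
\CC=\{\emptyset\}\cup\Bigl\{\,\bigcup_{j=1}^k[a_j,b_j]:1\le k\le m,\ a_j,b_j\in G_\eps\Bigr\}
\]
on an $\eps$-grid $G_\eps$, of size at most $m\,\eps^{-2m}$, and uses that every nonempty $S$ is a union of at most $m$ intervals, hence within $\eps$ of some $C\in\CC$. No enumeration of structural patterns is needed (boundary-touching components are absorbed by taking $a_j$ or $b_j$ at an extreme grid point), and there is no ``atypical'' regime to control: the approximation works for \emph{every} realisation, so your appeal to the tail $\P(N\ge n)$ is superfluous. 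Note also that Remark~\ref{rem:empty} bounds $K$, the number of balls in an irreducible representation, not the number of components or free endpoints; the fact the paper actually uses is simply that $m$ is the maximal possible number of disjoint intervals composing $S$.
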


Now we look at $d=1$ and non-constant radius. Note that Lemma~\ref{lem:quantuppergeneral} below immediately turns an upper bound for the large deviations of $K$ into an upper bound for the quantization rate. In particular, from Theorem~\ref{thm:d1} we obtain the following.

\begin{cor} Let $d=1$. Assume that the distribution of $R_1$ has a probability density $p$ with $p(z) \leq c z^{\alpha-1}$ for $z\to 0$ and some $\alpha>0$. Then
$$
    D^{(q)}(r) \leq \exp\left(-\sqrt{(1+\alpha) r \log r} \cdot (1+o(1))\right)\qquad \text{as $r\to\infty$.}
$$
\end{cor}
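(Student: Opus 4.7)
The plan is to apply Lemma~\ref{lem:quantuppergeneral} to the upper tail estimate of $K$ that comes from the upper-bound half of the argument behind Theorem~\ref{thm:d1}.

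First, I would verify that the upper bound $\P[K\geq n]\leq \exp(-(1+\alpha) n\log n\cdot(1+o(1)))$ of Theorem~\ref{thm:d1} only requires the one-sided hypothesis $p(z)\leq c\, z^{\alpha-1}$. This is natural: large values of $K$ are produced by many small visible intervals, so an \emph{upper} bound on the density of $R_1$ near $0$ already restricts the proliferation of small balls. The matching lower bound on $p$ in the hypotheses of Theorem~\ref{thm:d1} is only needed to ensure that enough small balls occur, which in turn delivers the lower tail bound $\P[K\geq n]\geq \exp(-(1+\alpha) n\log n\cdot(1+o(1)))$ that the corollary does not use.

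With the upper tail bound on $K$ in hand, the corollary follows by invoking Lemma~\ref{lem:quantuppergeneral}. Conceptually, one splits the expectation defining $D^{(q)}(r)$ according to whether $K\leq n$ or $K>n$: on $\{K\leq n\}$, the picture $S$ is a union of at most $n$ intervals in $[0,1]$, so it can be approximated to Hausdorff error $\varepsilon$ by an element of a codebook of size $\varepsilon^{-O(n)}$ built from a grid on the endpoints; on $\{K>n\}$, one uses the trivial bound $d_H(S,A)\leq 1$ together with the tail estimate. Equating the codebook-size constraint $\log\#\CC \asymp r$ with the tail exponent $(1+\alpha)\, n\log n$ leads to the choice $n\asymp\sqrt{r/((1+\alpha)\log r)}$, and this balance produces the stretched-exponential rate $\exp(-\sqrt{(1+\alpha) r\log r}\cdot(1+o(1)))$.

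The main obstacle is arithmetic bookkeeping rather than conceptual work: one must verify that all the implicit constants entering the grid encoding (the factor $2$ for the two endpoints of each interval, the contribution of the Poisson count $N$ itself, edge effects near $0$ and $1$, etc.) contribute only $(1+o(1))$ corrections under the square root, so that the sharp constant $1+\alpha$ is preserved. Since Lemma~\ref{lem:quantuppergeneral} packages precisely this trade-off in a reusable form, once the one-sided tail estimate for $K$ is extracted the proof reduces to a single application of that lemma.
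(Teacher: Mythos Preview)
Your proposal is correct and follows exactly the paper's approach: the corollary is stated there as an immediate consequence of Lemma~\ref{lem:quantuppergeneral} applied to the upper tail bound from Theorem~\ref{thm:d1}. Your observation that the upper-bound half of Theorem~\ref{thm:d1} uses only the one-sided hypothesis $p(z)\leq c\,z^{\alpha-1}$ is accurate (this is visible in the estimate $a=\lambda\int_0^{2M/n}p(z)\,dz\leq c(M/n)^\alpha$ in Section~\ref{sec:dimension1}) and is precisely what justifies the weaker assumption in the corollary.
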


Lower bounds could be obtained in a similar matter as for larger dimensions below, we do not pursue this here to keep the exposition comprehensive. We further mention that Theorem 4.2.1 in \cite{vormoorphd} treats the case $\alpha=1$, which is extended to general $\alpha$ here.

For the rest of this section, we deal with $d\geq 2$.

\paragraph*{Constant radius.} Let us first consider the case of constant radius.

\begin{thm} \label{thm:coding-2-3} For $\ell_1$-balls and constant radius
we have
$$
    D^{(q)}(r)
    = \exp\left( - \sqrt{ \frac{2}{d-1}\,r \log r}  \cdot (1+o(1))\right),
    \qquad \text{as $r\to\infty$}.
$$
%\end{thm}
%\begin{thm} \label{thm:coding3}
 For $\ell_2$-balls and constant radius
we have
\begin{multline*}
      \exp\left( - \sqrt{ \frac{4(d+1)}{d(d-1)}\,r \log r}  \cdot (1+o(1))\right)
\\
    \leq D^{(q)}(r)\leq
    \exp\left( - \sqrt{ \frac{2}{d-1}\,r \log r}  \cdot (1+o(1))\right) ,
    \qquad \text{as $r\to\infty$}.
\end{multline*}
\end{thm}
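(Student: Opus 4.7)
My plan is to deduce Theorem~\ref{thm:coding-2-3} from the large-deviation estimates for $K$ already established in Theorem~\ref{thm:constantgeneral}, using the general principle that tail bounds on $K$ translate directly into bounds on the quantization error. The upper bound is obtained from Lemma~\ref{lem:quantuppergeneral}, and the lower bound from a dual conversion in the opposite direction.

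For the upper bound, which is the same for $\ell_1$- and $\ell_2$-balls, I would fix a parameter $n^*$ and a scale $\delta>0$ and build a codebook $\CC$ consisting of all sets $\bigcup_{i=1}^{k}B(x_i,c)$ with $k\le n^*$ and each center $x_i$ chosen from a fixed $\delta$-net in $[0,1]^d$ of cardinality $O(\delta^{-d})$. Then $|\CC|\le (C/\delta)^{dn^*}$, and on the event $\{K\le n^*\}$ the nearest element of $\CC$ satisfies $\min_{A\in\CC}d_H(S,A)\le\delta$, because the Hausdorff distance between two unions of equal-radius balls is controlled by the matching distance between the two families of centers. Hence
$$
D^{(q)}(r)\le C\delta + D\cdot\P[K>n^*]
$$
under the constraint $dn^*\log(1/\delta)\lesssim r$. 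Plugging in the upper bound $\P[K>n^*]\le\exp(-\tfrac{d}{d-1}n^*\log n^*(1+o(1)))$ from Theorem~\ref{thm:constantgeneral} and balancing $\log(1/\delta)$ against $\tfrac{d}{d-1}n^*\log n^*$ with $n^*$ of order $\sqrt{r/\log r}$ produces the stated upper bound for both cases.

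For the lower bound, I would proceed dually. On $\{K\ge n\}$ the picture $S$ determines $n$ visible centers $\xi_{i_1},\ldots,\xi_{i_n}\in[0,1]^d$. The conditional joint distribution of this unordered tuple has positive density on a $dn$-dimensional subset of $[0,1]^{dn}$, and the Hausdorff distance between two such pictures is comparable to the matching distance of their center families. Therefore any codebook of size $e^r$ can yield Hausdorff error smaller than $\delta$ only for configurations lying in a set of Lebesgue volume $\lesssim e^r\delta^{dn}$. When $e^r\delta^{dn}\ll 1$, a positive fraction of $\{K\ge n\}$ is quantized with error at least $c\delta$, giving
$$
D^{(q)}(r)\ge c\delta\cdot\P[K\ge n].
$$
Inserting the lower bound on $\P[K\ge n]$ from Theorem~\ref{thm:constantgeneral}---with exponent $\tfrac{d}{d-1}n\log n$ in the $\ell_1$ case (matching the upper bound) and $\tfrac{d+1}{d-1}n\log n$ in the $\ell_2$ case---and optimising the product over $n$ produces the respective claimed lower bounds.

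The main technical obstacle is to confirm that on $\{K\ge n\}$ the visible centers genuinely carry positive $dn$-dimensional density: one has to check that small perturbations of the centers preserve the condition that each of the $n$ balls contributes to the boundary of $S$, so that the effective count does not collapse. This is an open-condition argument on the Boolean model that underpins the sharpness of the lower bound. A secondary obstacle is the precise matching of constants in the $\ell_1$ case, which requires that both directions of the conversion between $K$-tails and $D^{(q)}$-rates be simultaneously sharp; the codebook construction above and its entropy lower bound must be shown to be compatible at leading order in $r$.
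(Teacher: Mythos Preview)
Your upper bound is essentially the paper's argument: Lemma~\ref{lem:quantuppergeneral} (constant-radius version) applied with $a=\tfrac{d}{d-1}$ from Propositions~\ref{prop:upperc1}/\ref{prop:upperc2}. That part is fine.

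The lower bound, however, has a genuine gap. Your central claim --- that on $\{K\ge n\}$ the Hausdorff distance between two pictures is comparable to the matching distance between their center families --- is not true here. In the constant-radius scenarios that witness $\{K\ge n\}$ the balls overlap heavily (all centers lie in a thin slab of width $\sim n^{-1/(d-1)}$ in the last coordinate while the common radius is of order one), so most of each ball is covered by others and only a small ``top cap'' is exposed. Perturbing one center by $\eps$ therefore changes $S$ only near that cap. For $\ell_1$-balls the effect on $d_H$ is still of order $\eps$, but for $\ell_2$-balls it is only of order $\eps^2$; in neither case does a generic comparability statement hold, and in the $\ell_2$ case your optimisation would produce $\sqrt{2(d+1)/(d(d-1))}$ rather than the correct $\sqrt{4(d+1)/(d(d-1))}$.

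The paper does \emph{not} work on the event $\{K\ge n\}$ at all. It fixes the explicit scenario $E$ from the proof of Proposition~\ref{prop:lowerc1} (resp.\ \ref{prop:lowerc2}), uses the general inequality
\[
D^{(q)}(r)\ \ge\ \P[E]\cdot\delta\cdot\Bigl(1-e^r\sup_{C,\pi}\P[d_H(C,S)<\delta\mid E_\pi]\Bigr),
\]
and then proves a deterministic geometric lemma: from any set $C$ with $d_H(C,S)<\delta$ one can read off, for each tube $U_{\pi(i)}$, the top point $y_i=\arg\max\{y^{(d)}:y\in C\cap U_{\pi(i)}\}$ and show $\|x_i-y_i\|\le c\,\delta$ for $\ell_1$ (resp.\ $\|x_i-y_i\|\le c\,\sqrt{\delta}$ for $\ell_2$), where $x_i=\xi_i+(0,\dots,0,R)$. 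This turns the small-ball probability into $(c\,\delta^d n^{d/(d-1)})^n$ (resp.\ $(c\,\delta^{d/2} n^{(d+1)/(d-1)})^n$), and the subsequent optimisation yields the stated constants. Your ``positive $dn$-dimensional density conditioned on $\{K\ge n\}$'' route would have to reproduce exactly this recovery-of-centers step, which is the real content of the lower bound; without it the argument does not close.
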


\paragraph*{Radius distribution with density.}  Finally, we deal with a radius distribution with a density.

\begin{thm} \label{thm:coding1}
Assume that the distribution of $R_1$ has a probability density
$p$ with $p(z)\approx z^{\alpha-1}$ for $z\to 0$ for some $\alpha>0$.
Set $\dalpha:=\alpha\wedge 1$. Then
$$
   \exp\left( - b \, \sqrt{r \log r}  \cdot (1+o(1))\right)
   \leq D^{(q)}(r)
   \leq \exp\left( -\, \overline{b} \sqrt{r \log r}  \cdot (1+o(1))\right),
$$
as $r\to\infty$, where $b:=\sqrt{ \frac{2 (1+\alpha/d)}{d+1}}$ and $\overline{b}:=\sqrt{ \frac{2 (1+\dalpha/d)}{d+1}}$.
\end{thm}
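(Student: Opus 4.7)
The plan is to establish the upper and lower bounds separately, in each case combining the matching large-deviation estimate for $K$ from Theorem~\ref{thm:genericgeneral} with an optimisation over the number of balls actually used in the code.

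For the upper bound I would invoke Lemma~\ref{lem:quantuppergeneral}, whose content is essentially a \emph{multi-scale} codebook construction. For each $k=1,\ldots,K_{\max}$, let $\CC_k$ consist of all unions of $k$ balls whose centres and radii lie on an $\eps_k$-grid, and set $\CC=\bigcup_k\CC_k$. Allocate bits so that $|\CC|\leq e^r$ and each $\CC_k$ has size close to $e^r/K_{\max}$, i.e.\ $k(d+1)\log(1/\eps_k)\approx r$. On $\{K=k\}$ with $k\leq K_{\max}$, match each ball of $S$ to a grid ball in $\CC_k$ at Hausdorff error $O(\eps_k)$; this yields
\[
  D^{(q)}(r)\leq\sum_{k\leq K_{\max}}\eps_k\,\P[K=k]+\mathrm{diam}([0,1]^d)\,\P[K>K_{\max}].
\]
Plugging in $\P[K=k]\leq\exp(-(1+\bar\alpha/d)k\log k(1+o(1)))$ from Theorem~\ref{thm:genericgeneral} and $\eps_k=e^{-r/(k(d+1))}$, the $k$-th summand has exponent $f(k)=r/(k(d+1))+(1+\bar\alpha/d)k\log k$. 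The minimum occurs at $k^\ast$ where the two terms balance, giving $k^\ast\sim\sqrt{r/\log r}$ (up to constants); a short computation shows that \emph{both} terms then equal $r/(k^\ast(d+1))$, so $f(k^\ast)=2r/(k^\ast(d+1))=\sqrt{2(1+\bar\alpha/d)r\log r/(d+1)}=\overline{b}\sqrt{r\log r}$. The decisive factor of two in $\overline{b}^2$ arises precisely because at the optimum both precision and probability contribute additively to the exponent; a \emph{single}-scale codebook would only retain the maximum of the two and lose the factor.

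For the lower bound I would use the companion estimate $\P[K\geq n]\geq\exp(-(1+\alpha/d)n\log n(1+o(1)))$ together with a packing argument. The key geometric step is that for any fixed configuration $A$ of $n$ balls whose radii are of order one,
\[
  \P\bigl[d_H(S,A)<\eps,\,K\simeq n\bigr]\leq\eps^{n(d+1)}C^n,
\]
since forcing $S$ into a Hausdorff $\eps$-neighbourhood of $A$ essentially means $N=n$ plus a one-to-one matching of the balls of $S$ to those of $A$ at precision $\eps$ in both centre and radius (the $n!$ from the matching absorbs the $n!$ from $\P[N=n]$). A union bound over the $|\CC|\leq e^r$ codewords then gives $\P[d_H(S,\CC)<\eps,\,K\simeq n]\leq e^r\eps^{n(d+1)}C^n$, and as soon as the right-hand side is at most $\tfrac12\P[K\simeq n]$ we obtain $D^{(q)}(r)\geq\tfrac12\eps\,\P[K\simeq n]$. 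The resulting exponent $\log(1/\eps)+(1+\alpha/d)n\log n$ is minimised (subject to the constraint $r\lesssim n(d+1)\log(1/\eps)-(1+\alpha/d)n\log n$) by exactly the same balance as in the upper bound, producing $b^2=2(1+\alpha/d)/(d+1)$, with the factor of two appearing for the same reason.

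The main obstacle will be the geometric estimate on $\P[d_H(S,A)<\eps]$ underlying the lower bound. One has to rule out ways in which $S$ can be close to $A$ in Hausdorff distance without being given by a one-to-one ball-matching: pairs of balls of $S$ merging into a single ball of $A$, small balls absorbed into larger neighbours, or whole clusters of tiny balls together mimicking a single larger ball. This is also where the asymmetry between $\alpha$ and $\bar\alpha=\alpha\wedge 1$ enters the picture: when $\alpha>1$, the density $p$ vanishes at zero, so the ``free'' creation or absorption of tiny balls is weighted differently in the two directions, which breaks the duality between the packing and covering counts and accounts for the gap between $b$ and $\overline{b}$.
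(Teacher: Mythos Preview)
Your upper-bound plan is essentially the paper's: Lemma~\ref{lem:quantuppergeneral} is exactly the multi-scale codebook you describe, and the optimisation balancing $r/((d+1)k)$ against $(1+\dalpha/d)k\log k$ is carried out there via a Tauberian argument. Nothing to add.

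The lower bound, however, has a real gap. You want to condition on the abstract event $\{K\simeq n\}$ and then argue that $d_H(S,A)<\eps$ forces a one-to-one matching of balls. But as you yourself note, on $\{K\simeq n\}$ the balls of $S$ may overlap in arbitrary ways, $N$ may be much larger than $n$, and there is no mechanism to rule out the degenerate configurations you list. The paper does \emph{not} use the large-deviation lower bound $\P[K\geq n]\geq\exp(-(1+\alpha/d)n\log n)$ at all here. Instead it conditions on a concrete, highly structured event $E$: $N=n$, each centre $\xi_i$ lies in its own small box $V_{\pi(i)}$ of side $\sim n^{-1/d}$, and each radius $R_i$ lies in $[c_1 n^{-1/d},c_2 n^{-1/d}]$. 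The probability $\P[E]$ is computed directly and has the same rate $\exp(-(1+\alpha/d)n\log n)$; but now the balls are \emph{small} and $\delta$-separated by construction, so on $E$ the inequality $d_H(S,C)<\delta$ genuinely localises to each box and forces $\|\xi_i-x_i\|\leq 2\delta$, $|R_i-r_i|\leq 2\delta$ for deterministic $(x_i,r_i)$ extracted from $C$. This yields
\[
   \P\bigl[d_H(S,C)<\delta \,\big|\, E_\pi\bigr]\leq c^n\,\delta^{(d+1)n}\, n^{(1+1/d)n},
\]
and the extra $n^{(1+1/d)n}$ factor (coming from the small volume of the $V_i$ and the short radius interval) is what makes the subsequent optimisation close. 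Your assumption of ``radii of order one'' is the opposite regime and would not produce separated balls; the paper's choice of radii $\sim n^{-1/d}$ is the missing geometric idea that dissolves all the obstacles you anticipate.

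A minor point: your explanation of the $\alpha$ versus $\dalpha$ gap is not quite right. It is not a geometric asymmetry in ball absorption; it is simply that the concentration-function bound $\sup_x\P[R_1\in[x,x+h]]\leq c\,h^\alpha$ used in Proposition~\ref{prop:tailofK} can never hold with exponent larger than $1$, so the upper large-deviation bound saturates at $\dalpha=\alpha\wedge 1$.
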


Theorem~4.3.2 and Theorem~4.3.3 in \cite{vormoorphd} treat the case $\alpha=1$ and give the right lower bound in that case. Here, we show that this lower bound is sharp and extend both bounds to more general $\alpha$.

\section{Lower bounds for the large deviation results} \label{sec:proofsldlower}

\subsection{Constant radius for $\ell_1$-, $\ell_2$-, $\ell_\infty$-norms} \label{sec:constantlower}

\subsubsection{ $\ell_2$-norm}
\begin{prop} \label{prop:lowerc2}
Assume that the radius is a.s.\ constant $R_1\equiv c<1$ and we work with $\ell_2$-balls.
 If $d\geq2$, then
$$
    \P[ K \geq n ] \geq \exp( - (1+\frac{2}{d-1}) n \log n (1+o(1)) ),
    \qquad \text{as $n\to\infty$.}
$$
\end{prop}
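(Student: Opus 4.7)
The strategy is to bound $\P[K\geq n]$ from below by forcing $N=n$ and constraining the $n$ centers to a ``good'' configuration in which every ball is manifestly irreducible. Using
\[
\P[K\geq n] \;\geq\; \P[N=n]\,\P[K=n\mid N=n]
\]
together with Stirling's formula $\P[N=n]=\exp(-n\log n(1+o(1)))$, it will suffice to produce an event of conditional probability at least $\exp(-\tfrac{2}{d-1}n\log n(1+o(1)))$ on which all $n$ balls are essential, hence $K=n$.

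I would fix an interior point $x_0\in[0,1]^d$ and an auxiliary radius $\rho\in(0,c)$ with $B(x_0,\rho+c)\subset[0,1]^d$, and on the $\ell_2$-sphere $\{y:\|y-x_0\|_2=\rho\}$ pick $n$ points $p_1,\dots,p_n$ with mutual angular distance at least $\delta_n:=c_0\,n^{-1/(d-1)}$, by a standard packing argument on a $(d-1)$-dimensional manifold. Around each $p_i$ I define a cell $V_i\subset[0,1]^d$ consisting of all $y$ with $\bigl|\|y-x_0\|_2-\rho\bigr|\leq\eta_n/2$ and angular deviation from $p_i$ at most $\delta_n/4$, where the radial thickness is $\eta_n:=c_1\delta_n^2$ with $c_1>0$ a small dimensional constant fixed below. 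The cells are pairwise disjoint, contained in $[0,1]^d$, and $\Vol(V_i)\gtrsim\rho^{d-1}\delta_n^{d-1}\eta_n\sim n^{-1-2/(d-1)}$. The geometric heart of the argument is the claim: if $\xi_i\in V_{\sigma(i)}$ for some permutation $\sigma$, then for every $i$ the ``outward tip'' $\xi_i^+:=\xi_i+c(\xi_i-x_0)/\|\xi_i-x_0\|_2$ of the ball $B(\xi_i,c)$ lies outside every other $B(\xi_j,c)$, so no ball is redundant and $K=n$. This will follow from the law-of-cosines expansion
\[
\|\xi_i^+-\xi_j\|_2^2-c^2 \;=\; 2\rho(\rho+c)\bigl(1-\cos\Delta_{ij}\bigr)+O(\eta_n),
\]
where $\Delta_{ij}$ is the angular separation of $\xi_i,\xi_j$ seen from $x_0$: the leading term is $\gtrsim\delta_n^2$ for $\Delta_{ij}\geq\delta_n/2$, while the perturbative correction is $O(\eta_n)=O(\delta_n^2)$, so taking $c_1$ small enough makes the main term dominate uniformly in $i\neq j$.

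Combining disjointness of the $V_i$ with the $n!$ ways of assigning the $\xi_i$ to them,
\[
\P[K=n\mid N=n] \;\geq\; n!\prod_{i=1}^n \Vol(V_i) \;=\; \exp\!\Bigl(-\tfrac{2}{d-1}n\log n(1+o(1))\Bigr)
\]
by Stirling, and multiplying by $\P[N=n]$ produces the claimed lower bound. The main hurdle I anticipate is the uniform verification of the geometric lemma over all pairs $(i,j)$: the closest neighbours, with $\Delta_{ij}\sim\delta_n$, give the tight case and force the scaling $\eta_n\sim\delta_n^2$, whereas distant pairs are comfortable because the leading $(1-\cos\Delta_{ij})$ is of order one and easily dominates the $O(\eta_n)$ correction. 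The balance $\eta_n\sim\delta_n^2$, as opposed to the naive isotropic choice $\eta_n\sim\delta_n$, is precisely what produces the exponent $1+\tfrac{2}{d-1}$ rather than the weaker $1+\tfrac{d}{d-1}$ that a full-dimensional cell would yield.
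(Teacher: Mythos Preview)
Your approach is essentially the paper's: spread the $n$ centres over a $(d-1)$-dimensional surface with tangential separation $\delta_n\sim n^{-1/(d-1)}$ and normal thickness $\eta_n\sim\delta_n^2$, and use the outward-normal ``tips'' of the balls as witnesses that none is redundant. The paper simply takes the surface to be a flat coordinate hyperplane rather than a round sphere: the first $d-1$ coordinates of the centres lie on a grid of mesh $\sim n^{-1/(d-1)}$, the $d$-th coordinate lies in $[0,c_1 n^{-2/(d-1)}]$, and the witness is $x_i=\xi_i+(0,\dots,0,c)$. The verification that $\|x_i-\xi_j\|_2>c$ then reduces to a one-line Pythagorean computation in place of your law-of-cosines expansion.

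There is one genuine gap in your version. You require an interior point $x_0$ and $\rho>0$ with $B(x_0,\rho+c)\subset[0,1]^d$, but the largest $\ell_2$-ball inscribed in $[0,1]^d$ has radius $1/2$, so this forces $\rho+c\le 1/2$ and hence $c<1/2$; the range $c\in[1/2,1)$ is not covered by the construction as written. The paper's flat version avoids this: since the centres have $d$-th coordinate $o(1)$, the tip $\xi_i+(0,\dots,0,c)$ stays in $[0,1]^d$ for all $c<1$ once $n$ is large. You can patch your argument either by flattening the surface, or by using only a spherical cap near a face of the cube with the tips pointing inward.
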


\begin{proof}
Consider the following collection of boxes:
$$
   \left\{
   \prod_{m=1}^{d-1} \left[ \frac{k_m}{(2n)^{1/(d-1)}}+\frac{1/4}{(2n)^{1/(d-1)}},
                        \frac{k_m}{(2n)^{1/(d-1)}}+\frac{3/4}{(2n)^{1/(d-1)}}\right]
   \right\}
  \times \left[0,\frac{c_1}{n^{2/(d-1)}}\right]
$$
with $k_m\in\{0,\ldots,\lfloor (2n)^{1/(d-1)}\rfloor-1\}$ and $c_1:=2^{-(4+2/(d-1))}$.
The number of boxes being of order $2n$, we may
choose among them $n$ distinct boxes, say $V_1, \ldots, V_n$. Define the following event:
$$
    E:=E_n:=\{N=n\}\cap \bigcup_{\text{$\pi$ permutation of  $\{1,\ldots,n\}$}}
      \{\xi_i\in V_{\pi(i)}, i=1,\ldots,n\}.
$$
We will show that -- given $E$ -- each ball $B(\xi_i,R_1)$ ($i=1,\ldots,n$) contains
a point that is not covered by any other ball
$B(\xi_j,R_1)$, $j=1,\ldots,n$, $j\neq i$.
Therefore, $E$ implies $K\geq n$. More precisely, the point $x_i:=\xi_i+(0,\ldots,0,R_1)$
is obviously in the ball $B(\xi_i,R_1)$ and it is not covered by any other ball:
Indeed, for $j\neq i$:
\begin{eqnarray*}
     ||x_i-\xi_j||_2^2 &=& |\xi_i^{(d)}+R_1-\xi_j^{{(d)}}|^2
     + \sum_{m=1}^{d-1} |\xi_i^{(m)}-\xi_j^{(m)}|^2
\\
     &=& |\xi_i^{{(d)}}-\xi_j^{{(d)}}|^2+R_1^2-2R_1 |\xi_i^{{(d)}}-\xi_j^{{(d)}}|^2
     + \sum_{m=1}^{d-1} |\xi_i^{(m)}-\xi_j^{(m)}|^2
\\
     &\geq & 0+R_1^2-2R_1 c_1 n^{-2/(d-1)}+\left( \frac{1/2}{(2n)^{1/(d-1)}}\right)^2
%\\
%&> &
>R_1^2,
\end{eqnarray*}
by the choice of $c_1$. Further, note that for large enough $n$ the point $x_i$ is indeed
in $[0,1]^d$, as $R_1\equiv c<1$.

Therefore, the event $E$ implies $K\geq n$ and so
\begin{multline*}
   \P[K\geq n]\geq \P[E]
    = \frac{\lambda^n}{n!} e^{-\lambda} \cdot n!
    \cdot\left(
    \left( \frac{1/2}{(2n)^{1/(d-1)}} \right)^{d-1}  \cdot
    c_1 n^{-2/(d-1)}
    \right)^n
\\
    \geq \exp\left( - \left(1+\frac{2}{d-1}\right) n \log n (1+o(1)) \right).
\end{multline*}
\end{proof}

\subsubsection{ $\ell_1$-norm}
Now we consider the case of constant radius and $\ell_1$-balls.

\begin{prop} \label{prop:lowerc1}
Assume that the radius is a.s.\ constant $R_1=c<1$ and we work with $\ell_1$-balls.
If $d\geq2$, then
$$
    \P[ K \geq n ] \geq \exp\left( - \left(1+\frac{1}{d-1}\right) n \log n (1+o(1)) \right),
    \qquad \text{as $n\to\infty$.}
$$
\end{prop}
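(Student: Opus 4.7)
The plan is to follow exactly the scheme of Proposition~\ref{prop:lowerc2}: place $n$ points in well-separated thin boxes so that each ball has a designated witness point uncovered by the other $n-1$ balls. The key geometric observation specific to the $\ell_1$-case is that the ball is a cross-polytope and the natural witness $x_i:=\xi_i+(0,\ldots,0,R_1)$ sits at a vertex, so
\[
||x_i-\xi_j||_1 \;=\; |\xi_i^{(d)}+R_1-\xi_j^{(d)}|+\sum_{m=1}^{d-1}|\xi_i^{(m)}-\xi_j^{(m)}|
\]
is essentially $R_1$ plus the horizontal $\ell_1$-separation between $\xi_i$ and $\xi_j$, as soon as all the $d$-th coordinates are tiny. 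This is much cheaper than in the $\ell_2$-case, where the curvature of the Euclidean sphere forced the vertical scale down to $n^{-2/(d-1)}$; here I can afford a ``height'' of order $n^{-1/(d-1)}$, which is what produces the improved rate.

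Concretely, I would take boxes
\[
V_{k_1,\ldots,k_{d-1}}:=\prod_{m=1}^{d-1}\left[\frac{k_m+1/4}{(2n)^{1/(d-1)}},\frac{k_m+3/4}{(2n)^{1/(d-1)}}\right]\times\left[0,\frac{c_1}{(2n)^{1/(d-1)}}\right],
\]
with $k_m\in\{0,\ldots,\lfloor(2n)^{1/(d-1)}\rfloor-1\}$ and a small constant $c_1<1/2$. There are of order $2n$ such boxes, so one can select $n$ distinct ones $V_1,\ldots,V_n$, and set
\[
E:=\{N=n\}\cap\bigcup_{\text{$\pi$ permutation of $\{1,\ldots,n\}$}}\{\xi_i\in V_{\pi(i)},\;i=1,\ldots,n\}.
\]

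For the verification, on $E$ and $j\neq i$, the boxes carrying $\xi_i$ and $\xi_j$ differ in at least one of the first $d-1$ indices, so $\sum_{m=1}^{d-1}|\xi_i^{(m)}-\xi_j^{(m)}|\geq \frac{1/2}{(2n)^{1/(d-1)}}$; meanwhile $\xi_i^{(d)},\xi_j^{(d)}\in[0,c_1(2n)^{-1/(d-1)}]$ gives $|\xi_i^{(d)}+R_1-\xi_j^{(d)}|\geq R_1-c_1(2n)^{-1/(d-1)}$. Summing and invoking $c_1<1/2$ produces $||x_i-\xi_j||_1>R_1$, so each witness $x_i\in B(\xi_i,R_1)$ (which lies in $[0,1]^d$ for large $n$) is uncovered, hence $K\geq n$ on $E$. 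Since each box has volume of order $n^{-1-1/(d-1)}$, the same Poisson bookkeeping as in Proposition~\ref{prop:lowerc2} delivers $\P[E]=\exp(-(1+\frac{1}{d-1})n\log n (1+o(1)))$. I do not foresee a real obstacle: the whole content of the improvement over the $\ell_2$-case is the vertex computation of the first paragraph, after which the $\ell_2$-proof carries over verbatim with the better scaling.
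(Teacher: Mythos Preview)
Your proposal is correct and essentially identical to the paper's own proof: the same grid of thin boxes in the first $d-1$ coordinates with height of order $n^{-1/(d-1)}$ in the $d$-th coordinate, the same witness point $x_i=\xi_i+(0,\ldots,0,R_1)$, the same triangle-inequality verification $\|x_i-\xi_j\|_1\ge R_1-|\xi_i^{(d)}-\xi_j^{(d)}|+\sum_{m=1}^{d-1}|\xi_i^{(m)}-\xi_j^{(m)}|>R_1$, and the same Poisson computation of $\P[E]$. The only differences are in the bookkeeping of constants (your $c_1<1/2$ versus the paper's $c_2=2^{-(2+1/(d-1))}$), which is immaterial.
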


The proof is completely analogous to the $\ell_2$-norm case, with the only difference being
the possibility to keep the first component in a larger set due to the geometric structure
of $\ell_1$-balls. This results in the larger bound.

\begin{proof}
Consider the following collection of boxes:
$$
     \left\{
     \prod_{m=1}^{d-1} \left[ \frac{k_m}{(2n)^{1/(d-1)}}+\frac{1/4}{(2n)^{1/(d-1)}},
           \frac{k_m}{(2n)^{1/(d-1)}}+\frac{3/4}{(2n)^{1/(d-1)}}\right]
      \right\}
       \times \left[0,\frac{c_2}{n^{1/(d-1)}}\right] ,
$$
with $k_m\in\{0,\ldots,\lfloor (2n)^{1/(d-1)}\rfloor-1\}$.
Here, $c_2:=2^{-(2+1/(d-1))}$.
The number of boxes being of order $2n$, we may choose among them $n$ distinct boxes,
say $V_1, \ldots, V_n$. Define the following event:
$$
   E:=E_n:=\{N=n\}\cap\bigcup_{\text{$\pi$ permutation of $\{1,\ldots,n\}$ }}\{\xi_i\in V_{\pi(i)},
   i=1,\ldots,n\}.
$$
We will show that -- given $E$ -- each ball $B(\xi_i,R_1)$ ($i=1,\ldots,n$) contains а point
that is not covered by any other ball $B(\xi_j,R_1)$, $j=1,\ldots,n$,
$j\neq i$. Therefore, $E$ implies $K\geq n$. More precisely, the point $x_i:=\xi_i+(0,\ldots,0,R_1)$
is obviously in the ball $B(\xi_i,R_1)$ and it is not covered by any other ball:
Indeed, for $j\neq i$:
\begin{eqnarray*}
    ||x_i-\xi_j||_1 &=& |\xi_i^{{(d)}}+R_1-\xi_j^{{(d)}}|
    + \sum_{m=1}^{d-1} |\xi_i^{(m)}-\xi_j^{(m)}|
\\
   &\geq & R_1 - |\xi_i^{{(d)}}-\xi_j^{{(d)}}| + \sum_{m=1}^{d-1} |\xi_i^{(m)}-\xi_j^{(m)}|
\\
   &\geq & R_1 - c_2 n^{-1/(d-1)} + \frac{1/2}{(2n)^{1/(d-1)}}
%\\
%   &> &
   > R_1,
\end{eqnarray*}
by the choice of $c_2$. Further, note that for large enough $n$ the point $x_i$ is indeed
in $[0,1]^d$, as $R_1=c<1$.

Therefore, the event $E$ implies $K\geq n$. Thus,
\begin{multline}\label{eqn:againsameprob}
    \P[K\geq n]\geq \P[E]
    =\frac{\lambda^n}{n!} e^{-\lambda} \cdot n!
    \cdot\left(
       \left( \frac{1/2}{(2n)^{1/(d-1)}} \right)^{d-1}
         \cdot  c_2 n^{-1/(d-1)}
    \right)^n
\\
   \geq \exp\left( - \left(1+\frac{1}{d-1}\right) n \log n (1+o(1)) \right).
\end{multline}
\end{proof}

\subsubsection{ $\ell_\infty$-norm}
Now we consider the case of constant radius and $\ell_\infty$-balls.
The approach is very similar to the previous ones but the centers of the
balls are placed near a ``diagonal'' hyperplane.

\begin{prop}
Assume that the radius is a.s.\ constant $R_1=c<1$ and we work with $\ell_\infty$-balls.
If $d\geq2$, then
$$
    \P[ K \geq n ] \geq \exp\left( - \left(1+\frac{1}{d-1}\right) n \log n (1+o(1)) \right),
    \qquad \text{as $n\to\infty$.}
$$
\end{prop}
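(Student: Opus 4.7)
The plan is to mirror Propositions~\ref{prop:lowerc2} and~\ref{prop:lowerc1}: construct $n$ disjoint boxes $V_1,\ldots,V_n \subset [0,1]^d$ each of volume $V \asymp n^{-(1+1/(d-1))}$ and take
\[
E := \{N = n\} \cap \bigcup_{\pi}\{\xi_i \in V_{\pi(i)},\ i=1,\ldots,n\},
\]
so that the same routine calculation gives
\[
\P[K\geq n] \geq \P[E] = \lambda^n e^{-\lambda} V^n \geq \exp\!\left(-\Bigl(1+\tfrac{1}{d-1}\Bigr) n \log n \cdot (1+o(1))\right),
\]
provided the boxes are placed so that on $E$ each ball $B(\xi_i, c)$ contains a point that is not covered by the others.

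The new feature of the $\ell_\infty$-norm is that $\|x_i - \xi_j\|_\infty > c$ requires some \emph{single} coordinate to differ by more than $c$; the ``sum of small discrepancies'' trick used for $\ell_1$ fails completely. I would enforce a private coordinate by placing the nominal centers on a thickened neighborhood of the anti-diagonal hyperplane $\{x \in [0,1-c]^d : \sum_m x^{(m)} = d(1-c)/2\}$. Concretely, with $N := \lceil c_3\, n^{1/(d-1)} \rceil$ and $\alpha := 1/(4N)$, set
\[
\hat\xi_k := \left(\tfrac{k_1}{N}+\tfrac{1}{2N},\,\ldots,\,\tfrac{k_{d-1}}{N}+\tfrac{1}{2N},\ \tfrac{d(1-c)}{2} - \sum_{m<d}\Bigl(\tfrac{k_m}{N}+\tfrac{1}{2N}\Bigr)\right)
\]
for each multi-index $k = (k_1,\ldots,k_{d-1}) \in \{0,\ldots,N-1\}^{d-1}$, and let $W_k := \hat\xi_k + [-\alpha,\alpha]^d$; retain only those $k$ for which $W_k \subset [0,1-c]^d$. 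For $c_3$ large enough (depending on $d$ and $c$) at least $2n$ such admissible boxes remain, each of volume $(2\alpha)^d \asymp n^{-d/(d-1)}$, matching the desired $V$.

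Given $E$ I would choose the candidate private point $x_i := \xi_i + c(1, \ldots, 1)$; the box constraint yields $x_i \in [0,1]^d$, and $x_i \in B(\xi_i, c)$ is immediate. For each $j \neq i$ the multi-indices differ, so exactly one of the following holds: (a)~some $m < d$ has $k_{i,m} > k_{j,m}$, giving $\hat\xi_i^{(m)} - \hat\xi_j^{(m)} \geq 1/N$; or (b)~$k_{i,m} \leq k_{j,m}$ for every $m < d$ with strict inequality somewhere, whence the anti-diagonal identity forces $\hat\xi_i^{(d)} - \hat\xi_j^{(d)} = \sum_{m<d}(k_{j,m}-k_{i,m})/N \geq 1/N$. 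In either case some nominal coordinate gap is at least $1/N > 2\alpha$, hence persists after perturbation: $\xi_i^{(m)} > \xi_j^{(m)}$ in that coordinate, and therefore $|x_i^{(m)} - \xi_j^{(m)}| = c + (\xi_i^{(m)} - \xi_j^{(m)}) > c$.

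The main obstacle is precisely this dichotomy: the anti-diagonal construction must yield an antichain in the coordinate-wise partial order robust to the $\alpha$-perturbations inside the boxes. This robustness hinges on the anti-diagonal identity, which forces the last coordinate to move opposite to $\sum_{m<d} k_m$ whenever the first $d-1$ coordinates move monotonically, thereby supplying the needed coordinate of dominance in case (b) even though the first $d-1$ coordinates alone would not.
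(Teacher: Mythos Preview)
Your proposal is correct and follows essentially the same approach as the paper: both place the nominal centers on a hyperplane of constant coordinate sum (the paper calls it a ``diagonal'' hyperplane), exploit that any two such points form an antichain in the coordinate-wise order with a quantified gap, and take the private point $x_i = \xi_i \pm c(1,\ldots,1)$. The only differences are cosmetic: you build the centers as an explicit lattice in the first $d-1$ coordinates and argue via the dichotomy (a)/(b), whereas the paper chooses an abstract $\ell_1$-packing on the hyperplane and uses the balancing identity $\sum_m(\beta_j^{(m)}-\beta_i^{(m)})_+ = \sum_m(\beta_j^{(m)}-\beta_i^{(m)})_-$ to extract a coordinate with gap $\geq c_1 n^{-1/(d-1)}/(2d)$.
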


\begin{proof}
Let us fix $\rho_1,\rho_2$ such that $R_1<\rho_1<\rho_2<1$ and consider the $(d-1)$-dimensional nonempty
set
\[
   H:=\left\{ x\in[0,1]^d:\, \sum_{m=1}^d x^{(m)} = n\rho_2, \min_{1\leq m\leq d} x^{(m)}>\rho_1 \right\}.
\]
For sufficiently small $c_1=c_1(d,\rho_1,\rho_2,c)$  we may choose $n$ points $\beta_1,...,\beta_n$
in $H$ such that $||\beta_i-\beta_j||_1 > c_1 n^{-1/(d-1)}$ for all $i\neq j$. (We stress that we take $\ell_1$-norm here.)

Consider the following collection of boxes:
\[
    V_i:= B(\beta_i,c_2 n^{-1/(d-1)}),  1\leq i \leq n,
\]
with $c_2<  c_1/(4d)$.

Define the following event:
$$
   E:=E_n:=\{N=n\}\cap\bigcup_{\text{$\pi$ permutation of $\{1,\ldots,n\}$ }}\{\xi_i\in V_{\pi(i)},
   i=1,\ldots,n\}.
$$
We will show that -- given $E$ -- each ball $B(\xi_i,R_1)$ ($i=1,\ldots,n$) contains
a point that is not covered by any other ball $B(\xi_j,R_1)$,
$j\neq i$.

Consider the point $x_i:=\xi_i-(R_1,\ldots,R_1)$. For $1 \leq m \leq d$ we have
\begin{eqnarray*}
   x_i^{(m)} &=& \xi_i^{(m)} - R_1
   \geq\beta_{\pi(i)}^{(m)} - ||\xi_i-\beta_{\pi(i)}||_\infty - R_1
\\
   &\geq& \rho_1 - c_2 n^{-1/(d-1)} - R_1 >0
\end{eqnarray*}
for sufficiently large $n$, which yields $x_i\in [0,1]^d$.
It is also obvious that $x_i \in B(\xi_i,R_1)$.

We show now that $x_i$ is not covered by any other ball.
Let $x'_i:=\beta_{\pi(i)}-(R_1,\ldots,R_1)$. Then
\[
  ||x_i-x'_i||_\infty = ||\xi_i-\beta_{\pi(i)}||_\infty  \leq c_2 n^{-1/(d-1)}.
\]
It follows that for any $j\neq i$ we have
\begin{eqnarray} \notag
    ||x_i-\xi_j||_\infty
     &\geq& ||x'_i-\beta_{\pi(j)}||_\infty - ||x_i-x'_i||_\infty
     -||\xi_j-\beta_{\pi(j)}||_\infty
\\     \label{eqn:inftyconstR1}
    &\geq& ||x'_i-\beta_{\pi(j)}||_\infty -  2c_2 n^{-1/(d-1)}.
\end{eqnarray}
It remains to evaluate $ ||x'_i-\beta_{\pi(j)}||_\infty$.
Since $\beta_{\pi(i)},\beta_{\pi(j)}\in H$, we have
\[
  \sum_{m=1}^{d} \left( \beta_{\pi(j)}^{(m)} - \beta_{\pi(i)}^{(m)}\right)
  =  \sum_{m=1}^{d} \beta_{\pi(j)}^{(m)} -   \sum_{m=1}^{d} \beta_{\pi(i)}^{(m)} =0.
\]
In other words,
\[
    \sum_{m=1}^{d} \left( \beta_{\pi(j)}^{(m)} - \beta_{\pi(i)}^{(m)}\right)_+
    =  \sum_{m=1}^{d} \left( \beta_{\pi(j)}^{(m)} - \beta_{\pi(i)}^{(m)}\right)_-.
\]
On the other hand, by construction,
$$\sum_{m=1}^{d} \left( \beta_{\pi(j)}^{(m)} - \beta_{\pi(i)}^{(m)}\right)_+
    +  \sum_{m=1}^{d} \left( \beta_{\pi(j)}^{(m)} - \beta_{\pi(i)}^{(m)}\right)_-
    =||\beta_{\pi(j)} - \beta_{\pi(i)}||_1  > c_1 n^{-1/(d-1)}.
$$
%\begin{eqnarray*}
%   && \sum_{m=1}^{d} \left( \beta_{\pi(j)}^{(m)} - \beta_{\pi(i)}^{(m)}\right)_+
%    +  \sum_{m=1}^{d} \left( \beta_{\pi(j)}^{(m)} - \beta_{\pi(i)}^{(m)}\right)_-
%\\
%    &=& ||\beta_{\pi(j)} - \beta_{\pi(i)}||_1  > c_1 n^{-1/(d-1)}.
%\end{eqnarray*}
It follows that
\[
   \sum_{m=1}^{d} \left( \beta_{\pi(j)}^{(m)} - \beta_{\pi(i)}^{(m)}\right)_+
   > c_1 n^{-1/(d-1)}/2
\]
and
\begin{eqnarray} \notag
   ||x'_i-\beta_{\pi(j)}||_\infty &\geq&  \max_{1\leq m \leq d}
   \left( \beta_{\pi(j)}^{(m)} - {x'_i}^{(m)} \right)
\\  \notag
   &=& \max_{1\leq m \leq d} \left( \beta_{\pi(j)}^{(m)} - \beta_{\pi(i)}^{(m)}\right) +R_1
\\  \notag
     &\geq& \frac{1}{d} \sum_{m=1}^{d}
      \left( \beta_{\pi(j)}^{(m)} - \beta_{\pi(i)}^{(m)}\right)_+ + R_1
\\  \label{eqn:inftyconstR2}
    &>& c_1 n^{-1/(d-1)}/(2d) + R_1.
\end{eqnarray}
By using the bounds \eqref{eqn:inftyconstR1}, \eqref{eqn:inftyconstR2}, and
the definition of $c_2$, we obtain
\[
    ||x_i-\xi_j||_\infty \geq  c_1 n^{-1/(d-1)}/(2d) + R_1 -  2c_2 n^{-1/(d-1)}> R_1.
\]
This means $x_i\not\in B(\xi_i,R_1)$, as claimed.
Therefore, the event $E$ implies $K\geq n$ and so
\begin{multline*}
   \P[K\geq n]\geq \P[E]
    = \frac{\lambda^n}{n!} e^{-\lambda} \cdot n! \cdot\left( c_2 n^{-1/(d-1)} \right)^{dn}
\\
    \geq \exp\left( - \left(1+\frac{1}{d-1}\right) n \log n (1+o(1)) \right).
\end{multline*}
\end{proof}

\subsection{Generic radius: Lower bound via small balls} \label{sec:gen1}

 The following result is valid for arbitrary norm in $\R^d$, \, $d\geq1$.

\begin{prop} \label{prop:lowerSB}
Assume that the distribution of $R_1$ has a probability density $p$ with
$p(z)\geq c z^{\alpha-1}$ for small $z$ and some constants $c>0$ and $\alpha>0$. Then
$$
    \P[ K \geq n ] \geq \exp( - (1+\alpha/d) n \log n (1+o(1)) ),
    \qquad \text{as $n\to\infty$.}
$$
\end{prop}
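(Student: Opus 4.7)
The strategy is a direct ``small balls, well separated'' construction: I would force $N=n$, force every radius to be tiny, and scatter the $n$ centers into pairwise separated cells so that the resulting balls are automatically disjoint. Disjointness of all $n$ balls then yields $K=N=n$ and thus $K\ge n$, independently of the choice of norm.

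First, I would partition $[0,1]^d$ into at least $2n$ disjoint axis-aligned cubes of side $(2n)^{-1/d}$, pick any $n$ of them and call them $V_1,\ldots,V_n$. Because all norms on $\R^d$ are equivalent to $\ell_\infty$, these cubes are pairwise separated by a distance at least $c_0 n^{-1/d}$ in the given norm, for some norm-dependent constant $c_0>0$. I would then fix $\eps>0$ with $2\eps<c_0$ and define the event
\[
E:=\{N=n\}\cap\bigcup_\pi\bigl\{\xi_i\in V_{\pi(i)},\ R_i\le \eps\, n^{-1/d},\ i=1,\ldots,n\bigr\},
\]
where the union runs over all permutations $\pi$ of $\{1,\ldots,n\}$. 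On $E$, every ball $B(\xi_i,R_i)$ is contained in an $\eps n^{-1/d}$-neighborhood of $V_{\pi(i)}$, so balls with different indices are disjoint; hence $K=n$ on $E$.

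Finally, I would estimate $\P[E]$ by the standard product bound, using independence of $N$, $(\xi_i)$ and $(R_i)$, disjointness of the cells (so that the $n!$ permutation-events are disjoint), and the hypothesis $p(z)\ge cz^{\alpha-1}$:
\[
\P[E]\ge \P[N=n]\cdot n!\cdot\bigl(\Vol(V_1)\cdot \P[R_1\le \eps n^{-1/d}]\bigr)^n.
\]
Here $\Vol(V_1)\asymp n^{-1}$, the small-ball probability satisfies $\P[R_1\le \eps n^{-1/d}]\ge c' n^{-\alpha/d}$ for large $n$, and $\P[N=n]=\lambda^n e^{-\lambda}/n!$. The two $n!$'s cancel and Stirling yields
\[
\P[E]\ge \exp\bigl(-(1+\alpha/d)\,n\log n\,(1+o(1))\bigr),
\]
which is the claimed bound.

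There is no real obstacle in this argument; it is essentially the same scheme as in Propositions~\ref{prop:lowerc2} and \ref{prop:lowerc1}, only with the additional requirement that the radii be atypically small, contributing the extra factor $n^{-\alpha n/d}$. The one point worth a brief check is the separation constant $c_0$, which is obtained once and for all from the equivalence between the given norm and $\ell_\infty$, so that the construction works uniformly for arbitrary norms on $\R^d$.
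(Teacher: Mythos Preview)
Your overall scheme is exactly the paper's, but one step fails as written. Cubes of a \emph{partition} of $[0,1]^d$ into cells of side $(2n)^{-1/d}$ are not pairwise separated: adjacent cells share a face, so the distance between two of them is zero, not $c_0 n^{-1/d}$. Norm equivalence does not help here. Consequently, on your event $E$ two centers $\xi_i,\xi_j$ with $\pi(i)\neq\pi(j)$ can lie arbitrarily close to a common face; if $R_i$ is near $0$ and $R_j$ is near $\eps n^{-1/d}$, then $B(\xi_i,R_i)\subset B(\xi_j,R_j)$. Thus $E$ does not force disjointness and does not imply $K=n$.

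The paper fixes precisely this by taking for the $V_i$ the \emph{inner} half-cubes
\[
\prod_{m=1}^d\Bigl[\tfrac{k_m}{(2n)^{1/d}}+\tfrac{1/4}{(2n)^{1/d}},\ \tfrac{k_m}{(2n)^{1/d}}+\tfrac{3/4}{(2n)^{1/d}}\Bigr],
\]
which are genuinely $\ell_\infty$-separated by $\tfrac12(2n)^{-1/d}$; then choosing $c_2$ (your $\eps$) small enough in terms of the norm makes the balls disjoint. With this one-line correction your probability computation goes through verbatim and matches the paper's.
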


\begin{proof} Let $n\geq (2^{1/d}-1)^d$. A lower bound is obtained from the following
scenario. Consider the following collection of cubic boxes:
$$
    \prod_{m=1}^d \left[ \frac{k_m}{(2n)^{1/d}}+\frac{1/4}{(2n)^{1/d}},
          \frac{k_m}{(2n)^{1/d}}+\frac{3/4}{(2n)^{1/d}}\right],
          \qquad k_m\in\{0,\ldots,\lfloor (2n)^{1/d}\rfloor-1\}.
$$
The number of boxes being of order $2n$, we may choose among them $n$ distinct boxes,
say $V_1, \ldots, V_n$. Define the following event:
$$
   E:=E_n:=\{N=n\}\cap\bigcup_{\text{$\pi$ permutation of $\{1,\ldots,n\}$ }} E_\pi,
$$
where
$$
   E_\pi := \{ \xi_i\in V_{\pi(i)}, R_i \in [c_1 n^{-1/d}, c_2 n^{-1/d}],
   \forall i=1,\ldots,n \},
$$
with some constants $c_2>c_1>0$.  The constant $c_2$ depending on the norm under
consideration can be chosen so small that
for distinct $i$ and $j$ the balls $B(\xi_i,R_i)$ and $B(\xi_j,R_j)$ are disjoint.
Therefore, the event $E$ implies $K\geq n$.

Finally, note that
\begin{eqnarray*}
   \P[K\geq n]&\geq& \P[E]
\\
   &=&  n! \cdot \frac{\lambda^n}{n!} e^{-\lambda} \cdot
     \left( \left( \frac{1/2}{(2n)^{1/d}} \right)^{d} \cdot
     \int_{c_1 n^{-1/d}}^{c_2 n^{-1/d}} p(z) \dd z \right)^n
\\
   &\geq &  \left(\lambda 2^{-(d+1)} \right)^n
   e^{-\lambda}\cdot n^{-n}
   \cdot \left( \int_{c_1 n^{-1/d}}^{c_2 n^{-1/d}} c z^{\alpha-1} \dd z\right)^n
\\
   &\geq &   \left(\lambda 2^{-(d+1)}(c_2^\alpha-c_1^\alpha)/\alpha \right)^n
    e^{-\lambda} \cdot n^{-n} \cdot n^{-\alpha n/d}
\\
   & =& \exp( - (1+\alpha/d) n \log n \cdot(1+o(1))).
\end{eqnarray*}
\end{proof}

\subsection{Generic radius: Lower bound via surfaces}  \label{sec:gen2}
\begin{prop} Assume the radius distribution has a bounded density. Then,
\begin{itemize}
\item for $\ell_1$-norm balls,
$$\P[ K\geq n] \geq \exp\left( - \left(1+\frac{1}{d-1}\right) n \log n \cdot (1+o(1))\right);$$
\item for $\ell_2$-norm balls,
$$\P[ K\geq n] \geq \exp\left( - \left(1+\frac{2}{d-1}\right) n \log n \cdot (1+o(1))\right).$$
% \item for $\ell_\infty$-norm balls, ???
% $$\P[ K\geq n] \geq \exp\left( - (1+\frac{1}{d-1}) n \log n \cdot (1+o(1))\right).$$
\end{itemize}
\label{prop:LBsurface}
\end{prop}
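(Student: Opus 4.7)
My plan is to adapt the template of Propositions~\ref{prop:lowerc1}--\ref{prop:lowerc2}: place $n$ centers in a fine $(d-1)$-dimensional horizontal grid inside a thin slab and use the ``top peak'' $x_i:=\xi_i+R_i(0,\ldots,0,1)$ as a point visible only to its own ball. The new difficulty, absent in the constant-radius case, is that a neighbour ball with a larger radius can now engulf a smaller peak; the naive remedy of forcing all radii into a window of width $\lesssim n^{-1/(d-1)}$ costs an additional factor $n^{-n/(d-1)}$ in the probability and would yield the suboptimal exponent $1+2/(d-1)$ even for $\ell_1$-balls. The decisive trick is to \emph{align the peaks at a common height} by choosing $\xi_i^{(d)}\approx h-R_i$ for a single fixed $h$: then all peaks $x_i$ lie near the hyperplane $\{x^{(d)}=h\}$ irrespective of the individual radii, and the original constant-radius analysis carries over with only a fixed-size window for the $R_i$'s.

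Concretely, since $p$ is a probability density, one can choose $0<c<c+\eta<1$ with $q:=\P[R_1\in[c,c+\eta]]>0$. Fix such $c,\eta$, a height $h\in(c+\eta,1)$ bounded away from $c+\eta$ and from $1$, and a slack $\eps>0$. Let $\widehat V_1,\ldots,\widehat V_n$ be $n$ distinct boxes chosen from the $(d-1)$-dimensional grid of side $s:=\tfrac{1/2}{(2n)^{1/(d-1)}}$ inside $[0,1]^{d-1}$ (as in the constant-radius proofs), and consider
\begin{multline*}
E_n := \{N=n\}\cap \bigcup_\pi \Bigl\{R_i\in[c,c+\eta],\ (\xi_i^{(1)},\ldots,\xi_i^{(d-1)})\in \widehat V_{\pi(i)},\\
\xi_i^{(d)}\in[h-R_i-\eps,\,h-R_i+\eps],\ \forall i=1,\ldots,n\Bigr\}.
\end{multline*}
On $E_n$ one has $x_i^{(d)}\in[h-\eps,h+\eps]$ and $\xi_j^{(d)}\in[h-R_j-\eps,h-R_j+\eps]$, whence $|x_i^{(d)}-\xi_j^{(d)}|\geq R_j-2\eps$ for every $j\neq i$.

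Combining this vertical estimate with the horizontal cell separation of order $s$, the same triangle-inequality calculation as in Propositions~\ref{prop:lowerc1}--\ref{prop:lowerc2} gives $\|x_i-\xi_j\|>R_j$ provided $s>2\eps$ in the $\ell_1$-case and $s^2>4\eps(c+\eta)$ in the $\ell_2$-case, so that $E_n\subset\{K\geq n\}$. Taking $\eps\sim s\sim n^{-1/(d-1)}$ (respectively $\eps\sim s^2\sim n^{-2/(d-1)}$), the probability that a single center lies in its assigned box with an admissible radius is $q\cdot s^{d-1}\cdot 2\eps$, of order $n^{-d/(d-1)}$ in the $\ell_1$-case (resp.\ $n^{-(d+1)/(d-1)}$ in the $\ell_2$-case). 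Summing over the $n!$ disjoint choices of $\pi$ (which cancel the $1/n!$ from $\P[N=n]$) one obtains
\[
\P[K\geq n]\ge \P[E_n]\ge \exp\!\bigl(-(1+\tfrac{1}{d-1})\,n\log n\,(1+o(1))\bigr)
\]
in the $\ell_1$-case, and the analogous bound with $\tfrac{1}{d-1}$ replaced by $\tfrac{2}{d-1}$ in the $\ell_2$-case. The main conceptual obstacle is precisely the peak-alignment idea; once it is in place the remaining computation is essentially identical to that of Section~\ref{sec:constantlower}, and the bounded-density hypothesis enters only through the existence of an interval $[c,c+\eta]\subset(0,1)$ with $q>0$.
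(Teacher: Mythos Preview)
Your argument is correct and rests on the same geometric insight as the paper's proof: force all ``peaks'' $x_i=\xi_i+(0,\dots,0,R_i)$ to lie in a thin horizontal slab $\{x^{(d)}\in[h-\eps,h+\eps]\}$, so that $|x_i^{(d)}-\xi_j^{(d)}|\ge R_j-2\eps$ and the constant-radius computation of Propositions~\ref{prop:lowerc1}--\ref{prop:lowerc2} goes through verbatim.

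The implementation, however, is genuinely simpler than the paper's. The paper achieves the alignment by requiring $\xi_i^{(d)}+R_i\in[1-\beta_n,1]$ and, in order to show that the event $\{\xi_i^{(d)}\in Z(\beta_n),\ \xi_i^{(d)}+R_i\in[1-\beta_n,1]\}$ has probability of order $\beta_n$, invokes a concentration-function estimate (Lemma~\ref{lem:concentrationfctn2}) which is where the bounded-density assumption is actually used. You sidestep this entirely: by first restricting $R_i$ to a fixed interval $[c,c+\eta]$ of positive mass and then conditioning, the event $\{\xi_i^{(d)}\in[h-R_i-\eps,h-R_i+\eps]\}$ has probability exactly $2\eps$ by independence of $\xi_i$ and $R_i$, and no auxiliary lemma is needed. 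As you observe, your proof in fact requires only $\P[R_1\in(0,1)]>0$, strictly less than a bounded density. The price is a fixed constant loss (the factor $q$), which is irrelevant at the level of $n\log n$ asymptotics; the paper's route via Lemma~\ref{lem:concentrationfctn2} is more robust in that it never localises the radius, but for the present statement your shortcut is both adequate and cleaner.
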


We will prepare the proof with the following lemma. It shows that any probability
distribution with bounded density
has many intervals in its support with the mass proportional to the length of those intervals
or larger.

\begin{lem} \label{lem:concentrationfctn2}
Let $R\in[0,1]$ be a random variable having a bounded probability density $p$, say
\begin{equation} \label{eqn:densitybound}
\esssup_{x\in[0,1]} p(x) \leq c < \infty.
\end{equation}
Then for any $\beta\in[0,1]$ and any $\delta>0$ we have
$$
   \Volone
   \lbrace x\in [0,1] : \P[ R\in [x,x+\beta] ] > \delta \beta \rbrace
   > \frac{1 - 2 c \beta - \delta}{c}.
$$
\end{lem}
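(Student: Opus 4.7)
The plan is a standard two-sided estimate on the function
\[
F(x) := \P[R\in[x,x+\beta]], \qquad x\in[0,1],
\]
where we combine a pointwise upper bound with an averaged lower bound.

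First I would record the pointwise bound. By the density bound \eqref{eqn:densitybound}, for every $x\in[0,1]$ we have $F(x)\le c\beta$, because $F(x)=\int_{\max(x,0)}^{\min(x+\beta,1)} p(y)\d y$.

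Next I would produce a lower bound on the mean of $F$. By Fubini,
\[
\int_0^1 F(x)\d x = \int_0^1 p(y)\,\Volone\bigl\{x\in[0,1]: y-\beta\le x\le y\bigr\}\d y.
\]
For $y\in[\beta,1]$ the inner length equals $\beta$, so dropping the contribution of $y\in[0,\beta]$ and using the bounded density once more on that small interval ($\int_0^\beta p\le c\beta$) gives
\[
\int_0^1 F(x)\d x \ge \beta\int_\beta^1 p(y)\d y \ge \beta(1-c\beta).
\]

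Finally, I would combine these two estimates by a Markov-type split. Setting $A=\{x\in[0,1]:F(x)>\delta\beta\}$ and using $F\le c\beta$ on $A$ and $F\le \delta\beta$ on $[0,1]\setminus A$, we get
\[
\beta(1-c\beta)\le \int_0^1 F(x)\d x \le c\beta\,\Volone(A) + \delta\beta\,(1-\Volone(A)),
\]
which solves to $\Volone(A)\ge (1-c\beta-\delta)/(c-\delta)\ge (1-2c\beta-\delta)/c$ (the last inequality being crude but matching the form stated in the lemma).

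There is no real obstacle here; the only mildly delicate point is handling the boundary effect when $x+\beta>1$, which is dealt with cleanly by the Fubini computation above, and checking that the final algebraic manipulation goes the right way (trivial when $\delta<c$, and the lemma is vacuous otherwise).
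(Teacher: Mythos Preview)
Your proof is correct and follows essentially the same two--sided estimate as the paper: a Fubini lower bound on $\int F$ combined with the pointwise bound $F\le c\beta$ and a Markov split. The only cosmetic differences are that you integrate over $[0,1]$ rather than $[0,1-\beta]$ (hence one boundary term instead of two, giving the sharper intermediate $1-c\beta$) and that your Markov split keeps the factor $(1-\Volone A)$, so you obtain $(1-c\beta-\delta)/(c-\delta)$ first and then weaken to the stated $(1-2c\beta-\delta)/c$.
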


%It is simple to draw the following corollary (set $\delta=1/4$).
%
%\begin{cor} Let $R\in[0,1]$ be a random variable having a bounded probability density $p$, say with bound $c<\infty$. Then for any $\beta\in[0,\frac{1}{4c}]$ we have
%$$
%|\lbrace x\in [0,1] : \P[ R\in [x,x+\beta] ] > \frac{\beta}{4} \rbrace | > \frac{1}{4c}.
%$$
%\end{cor}

\begin{proof} Set, for ease of notation, $\P[ R\in [x,x+\beta] ]=:Q_x(\beta)$.
First note that we have
\begin{eqnarray*}
    \int_0^{1-\beta} Q_x(\beta) \dd x
    &=& \int_0^{1-\beta} \int_x^{x+\beta} p(y) \dd y  \dd x
    = \int_0^1 \int_{0\vee (y-\beta)}^{(1-\beta)\wedge y}   \dd x p(y) \dd y
\\
    &\geq&  \int_\beta^{1-\beta} \int_{y-\beta}^y  \dd x p(y) \dd y
   = \beta \int_\beta^{1-\beta} p(y) \dd y
\\
 &\geq&  \beta \left( \int_0^1 p(y) \dd y - 2 c \beta \right)
   = \beta (1- 2c\beta).
\end{eqnarray*}

On the other hand,  note that $Q_x(\beta)\leq c \beta$. Therefore,
\begin{eqnarray*}
    \int_0^{1-\beta} Q_x(\beta) \dd x
    &=& \int_0^{1-\beta} Q_x(\beta)\ind_{Q_x(\beta)\leq \delta \beta}\dd x
        +  \int_0^{1-\beta} Q_x(\beta)\ind_{Q_x(\beta)> \delta \beta}\dd x
\\
    &\leq & \delta \beta +  c\beta \int_0^1 \ind_{Q_x(\beta)> \delta \beta}\dd x.
\end{eqnarray*}
It follows that
\[
    1-2c\, \beta \leq \delta + c\ \Volone
      \lbrace x\in [0,1] : \P[ R\in [x,x+\beta] ] > \delta \beta \rbrace.
\]
Rearranging the terms gives the claim.
\end{proof}

\begin{proof}[ of Proposition~\ref{prop:LBsurface}]
We shall proceed in a number of steps: after some preparations,
we define a scenario, estimate its probability, and then show that
the scenario implies $K\geq n$.

{\it Preparation: $(d-1)$-dimensional boxes.} Set $\eps_n:=(2n)^{-1/(d-1)}/2$.
Let us consider the following collection of boxes:
$$
    \prod_{m=1}^{d-1} \big[ 2 k_m \eps_n , (2k_m+1)\eps_n\big],\qquad
    k_m \in\{ 0,\ldots, \lfloor \frac{\eps_n^{-1}-1}{2}\rfloor\}.
$$
The number of such boxes is greater or equal to
$$
   \Big( \lfloor \frac{\eps_n^{-1}-1}{2}\rfloor + 1\Big)^{d-1}
   \sim (2\eps_n)^{-(d-1)} = 2n.
$$
Therefore, one can choose $n$ of these boxes, say $V_1,\ldots, V_n$.
The main feature of these boxes is that
\begin{equation} \label{eqn:differentboxes}
     \forall i\neq j ~ \forall y\in V_i, y'\in V_j ~:~
     \max_{1\leq m \leq d-1} |y^{(m)} - (y')^{(m)}| \geq \eps_n.
\end{equation}

{\it Preparation: support of the $d$-th component.}  Let $c$ be the bound
of the density (as in (\ref{eqn:densitybound})). It follows immediately from
Lemma~\ref{lem:concentrationfctn2} (with $\delta=1/4$) that
for any $\beta\in[0,\frac{1}{4c}]$ we have
$$
   \Volone\left\lbrace x\in [0,1] : \P[ R\in [x,x+\beta] ] >  \frac{\beta}{4}
   \right\rbrace
   > \frac{1}{4c}.
$$
This implies that for $\beta<1/(8c)$ we have
\begin{eqnarray*}
     \frac{1}{8c}
     &<& \Volone \left\lbrace x\in [0,1-\beta] : \P[ R\in [x,x+\beta] ]
        > \frac{\beta}{4} \right\rbrace
\\
     &=& \Volone \left\lbrace z\in [0,1-\beta] : \P[ R+z\in [1-\beta,1] ]
        > \frac{\beta}{4} \right\rbrace.
\end{eqnarray*}
Let us denote
$$
   Z(\beta):=\left\lbrace z\in [0,1-\beta] : \P[ R+z\in [1-\beta,1] ]
      > \frac{\beta}{4} \right\rbrace.
$$
We shall use this set as the support of the $d$-th component in the scenario we
will construct.
It will be used for $\beta=\beta_n\to 0$ so that the assumption $\beta<1/(8c)$
is satisfied for $n$ large enough.

{\it Definition of the scenario and evaluation of its probability.}
Define the `tubes'
$$
W_i:=V_i \times Z(\beta_n), \qquad i=1,\ldots, n,
$$
where $\beta_n$ is chosen later according to the involved norm.

Consider the following scenario:
$$
   E:=E_n:=\{N=n\}\cap
   \bigcup_{\text{$\pi$ permutation of $\{1,\ldots,n\}$ }} E_\pi,
$$
where
$$
   E_\pi := \{ \xi_i\in W_{\pi(i)}, \xi_i^{(d)}+R_i \in [1-\beta_n,1],
   \forall i=1,\ldots,n\}.
$$
By using the projection
$\sigma(x^{(1)},\ldots,x^{(d)}):=(x^{(1)},\ldots,x^{(d-1)})$,
we can estimate the probability of $E$ as follows:
\begin{eqnarray}
    \P[E] &\geq & \frac{\lambda^n}{n!} \, e^{-\lambda} \cdot n! \cdot
    \left( \P[ \sigma(\xi_1)\in V_1]\cdot
    \P[\xi_1^{(d)}\in Z(\beta_n), \xi_1^{(d)}+R_1 \in [1-\beta_n,1]] \right)^n
    \notag
\\
    &\geq & \lambda^n e^{-\lambda} \left( \eps_n^{d-1} \cdot
    \P[\xi_1^{(d)}\in Z(\beta_n)] \,
    \inf_{z\in Z(\beta_n)}
    \P[z+R_1 \in [1-\beta_n,1]] \right)^n
    \notag
\\
    &\geq & \lambda^n e^{-\lambda}
    \left( \eps_n^{d-1} \cdot \frac{1}{8c}\, \frac{\beta_n}{4}\right)^n
    \notag
\\
    &=& \left(\frac{\lambda}{2^{d+5} c}\right)^n e^{-\lambda}
    n^{-n} \beta_n^n,
    \label{eqn:rate}
\end{eqnarray}
where we used that the $\xi_i$ are uniformly distributed in $[0,1]^d$ in the second
and third step. Later, we will chose $\beta_n$ (polynomially decaying in $n$)
according to the involved norm.

{\it Scenario $E$ implies $K\geq n$.} We now proceed to showing that the
scenario $E=E_n$ implies $K\geq n$.

For this, it is sufficient to show that under $E$, for any $i$ the following auxiliary
point $x_i$ belongs to the ball $B(\xi_i,R_i)$ but it is not covered by any other ball.
Thus, none of the balls $B(\xi_i,R_i)$ can be left out when representing the picture $S$.

Define $x_i:=\xi_i + (0,\ldots,0,R_i)$. Clearly,
\[
   \| \xi_i - x_i\|_\infty =  \|\xi_i - x_i\|_2 = \| \xi_i - x_i\|_1 = R_i,
\]
thus $x_i\in B(\xi_i,R_i)$. It remains to show that
$x_i\not\in B(\xi_j,R_j)$ for any $j\neq i$, i.e.
\begin{equation} \label{eqn:auxpointout}
    ||x_i-\xi_j||_q > R_j\quad \forall j\neq i,
\end{equation}
for respective $q\in\{1;2\}$.
This will be achieved separately for the different norms and with different choices
of the sequence $(\beta_n)$.

{\it Proof of $\eqref{eqn:auxpointout}$ for $\ell_1$-norm.}
Here we choose $\beta_n:=\eps_n/2$.

Assume for the sake of contradiction that $||x_i-\xi_j||_1\leq R_j$ for some
$i\neq j$.

Since $\xi_j^{(d)} + R_j \in [1-\beta_n,1]$ on the event $E$, for any
$z\in[1-\beta_n,1]$ we have
$$
   \beta_n \geq | (\xi_j^{(d)} + R_j ) - z| = |R_j - (z-\xi_j^{(d)})|
   \geq R_j - |z-\xi_j^{(d)}|
$$
and so $|z-\xi_j^{(d)}| \geq R_j - \beta_n$.
Letting $z:=x_i^{(d)}=\xi_i^{(d)}+R_i\in[1-\beta_n,1]$, we have
\be \label{eqn:xixijd}
   |x_i^{(d)} -\xi_j^{(d)}| \geq R_j - \beta_n.
\ee
It follows that
\begin{eqnarray*}
   R_j &\geq& ||x_i -\xi_j||_1
   = \sum_{m=1}^{d-1} | x_i^{(m)}-\xi_j^{(m)}|
   + | x_i^{(d)}-\xi_j^{(d)}|
\\
   &\geq &\sum_{m=1}^{d-1} | \xi_i^{(m)}-\xi_j^{(m)}| + R_j - \beta_n
\\
   &\geq &\max_{1\leq m \leq d-1} | \xi_i^{(m)}-\xi_j^{(m)}| + R_j - \beta_n.
\end{eqnarray*}
Hence,
$$
    \max_{1\leq m \leq d-1} | \xi_i^{(m)}-\xi_j^{(m)}|
    \leq \beta_n = \eps_n/2,
$$
in contradiction to (\ref{eqn:differentboxes}).
Therefore, we must have $||x_i-\xi_j||_1> R_j$.

{\it Proof of $\eqref{eqn:auxpointout}$ for $\ell_2$-norm.}
Here we choose $\beta_n:=\eps_n^2/3$.

Assume for the sake of contradiction that $||x_i-\xi_j||_2\leq R_j$
for some $j\neq i$.

Using \eqref{eqn:xixijd} again, we have
\begin{eqnarray*}
   R_j^2 &\geq& ||x_i -\xi_j||_2^2
   = \sum_{m=1}^{d-1} | x_i^{(m)}-\xi_j^{(m)}|^2 + | x_i^{(d)}-\xi_j^{(d)}|^2
\\
   &\geq &\sum_{m=1}^{d-1} | \xi_i^{(m)}-\xi_j^{(m)}|^2 + (R_j - \beta_n)^2
\\
   &\geq &\max_{1\leq m \leq d-1} | \xi_i^{(m)}-\xi_j^{(m)}|^2
   + R_j^2 -2 R_j \beta_n + \beta_n^2.
\end{eqnarray*}
Hence,
$$
    \max_{1\leq m \leq d-1} | \xi_i^{(m)}-\xi_j^{(m)}|^2 \leq 2 R_j \beta_n
    \leq 2 \eps_n^2 / 3 < \eps_n^2,
$$
in contradiction to (\ref{eqn:differentboxes}). Therefore, we must have
$||x_i-\xi_j||_2> R_j$.

{\it Rate of $\P[E]$.} Finally, it is simple to see that the rate in \eqref{eqn:rate}
with the choices $\beta_n=\eps_n/2$ ($\ell_1$-norm) and $\beta_n=\eps_n^2/3$
($\ell_2$-norm) leads to the asserted rates in the statement of the proposition.
\end{proof}

\section{Upper bounds for the large deviation results} \label{sec:proofsldupper}
\subsection{Generic radius}  \label{sec:gen3}

The following result based on an assumption on the radius
concentration function is valid for arbitrary norm in $\R^d$.

\begin{prop} \label{prop:tailofK}
If $R_1$ is such that $\sup_{x>0} \P\big[ R_1 \in [x,x+r]\big] \leq c r^{\alpha}$
for some $c>0,\alpha\in(0,1]$ and all $r>0$,
%% and $\dalpha:=\alpha\wedge 1$
then
$$
   \P[ K \geq n ] \leq \exp( - (1+\alpha/d) n \log n (1+o(1)) ),
   \qquad \text{as $n\to\infty$.}
$$
\end{prop}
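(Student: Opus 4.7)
The plan is to combine a Slivnyak--Mecke factorial-moment bound with a discretization of witness positions, using the radius concentration condition to gain the extra $n^{-\alpha n/d}$ factor on top of the Poisson prefactor.

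\textbf{Step 1 (Structural observation and Mecke bound).} On $\{K \geq n\}$, any minimum covering subset $T^*$ satisfies $|T^*| \geq n$, and each $i \in T^*$ admits a witness $y_i \in B_i \setminus \bigcup_{j \in T^*,\, j \neq i} B_j$. Restricting to any $n$ indices of $T^*$ yields $n$ \emph{mutually essential} balls, i.e.\ each has a witness not covered by the others. Hence $\{K \geq n\} \subseteq \bigcup_{|T| = n} A_T$, where $A_T$ is the mutual essentiality event for $T$. Since $A_T$ depends only on the balls in $T$, taking expectations and using the Poisson identity $\E[\binom{N}{n}] = \lambda^n/n!$ gives
\[
\P[K \geq n] \leq \frac{\lambda^n}{n!}\, \P[A_n],
\]
with $A_n$ the mutual essentiality event for $n$ i.i.d.\ balls. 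Since $\lambda^n/n! = \exp(-n \log n\,(1+o(1)))$, it suffices to show $\P[A_n] \leq \exp(-(\alpha/d)\, n \log n\,(1+o(1)))$.

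\textbf{Step 2 (Witness discretization and radius constraint).} A trivial bound on $\P[A_n]$ is useless (small disjoint balls satisfy $A_n$ with high probability), so I would discretize the witness positions. Take a grid $G \subset [0,1]^d$ of mesh $\delta_n \asymp n^{-1/d}$ (so $|G| \asymp n$) and, on $A_n$, round each witness $y_j$ to the nearest grid point. A union bound over ordered labelled tuples $(y_1,\ldots,y_n) \in G^n$ with distinct entries yields
\[
\P[A_n] \leq \sum_{(y_1,\ldots,y_n)} \P\bigl[\text{witnesses in the cells of } y_j \text{ realise mutual essentiality}\bigr].
\]
For each tuple the conditions $y_k \in B_{i_k}$ and $y_k \notin B_{i_l}$ ($l \neq k$) translate, up to $O(\delta_n)$, into: $y_j$ is the strict nearest $y$-point to $\xi_j$, and
\[
R_j \in \bigl[\|\xi_j - y_j\|,\ \min_{i \neq j} \|\xi_j - y_i\|\bigr).
\]
By the concentration hypothesis, the probability that $R_j$ lies in this interval of length $L_j(\xi_j, y)$ is at most $c\, L_j(\xi_j, y)^\alpha$.

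\textbf{Step 3 (Integration, conclusion, and main obstacle).} By independence of $(\xi_j, R_j)_{j=1}^n$ and integrating $\xi_j$ uniformly in its Voronoi cell $V_j(y)$ with respect to $\{y_1, \ldots, y_n\}$ (volume $\asymp 1/n$, diameter $\lesssim n^{-1/d}$, so $L_j \lesssim n^{-1/d}$ throughout), each factor is at most $\int_{V_j(y)} c\,L_j^\alpha \dd\xi_j \lesssim n^{-(1+\alpha/d)}$. The number of ordered tuples with distinct entries is $(|G|)_n \asymp n!$, so
\[
\P[A_n] \leq C^n\, n!\cdot n^{-(1+\alpha/d)n} = \exp\bigl(-(\alpha/d)\, n \log n\,(1+o(1))\bigr),
\]
which combined with Step~1 gives the required estimate. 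The main obstacle is the last step: the integral $\int_{V_j(y)} L_j^\alpha \dd\xi_j$ must be controlled uniformly across all grid tuples (in particular the degenerate ones with clustered $y_j$'s, where the Voronoi estimate breaks down), and the $O(\delta_n)$ rounding must be absorbed into the concentration bound, since $L_j$ and $\delta_n$ live on the same scale. The choice $\delta_n \asymp n^{-1/d}$ is critical: it matches the lower-bound scale from Proposition~\ref{prop:lowerSB} and yields a sharp exponent.
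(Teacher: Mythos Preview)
Your Step~1 is correct: the Mecke-type reduction $\P[K\ge n]\le \tfrac{\lambda^n}{n!}\,\P[A_n]$ via mutual essentiality is valid, and it does reduce the problem to showing $\P[A_n]\le \exp(-(\alpha/d)\,n\log n\,(1+o(1)))$.

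The gap is in Steps~2--3, and it is exactly the ``obstacle'' you flag but do not resolve. After discretising the witnesses $w_j\to y_j\in G$, the radius constraint becomes
\[
R_j\in\bigl[\,\|\xi_j-y_j\|-c\delta_n,\ \min_{i\ne j}\|\xi_j-y_i\|+c\delta_n\bigr),
\]
and non-emptiness of this interval only forces $\xi_j$ into the \emph{approximate Voronoi cell of $y_j$ with respect to the chosen tuple $(y_1,\dots,y_n)$}, not into a cell of the ambient grid $G$. There is no reason this cell has volume $\asymp 1/n$ or diameter $\lesssim n^{-1/d}$: if, say, $y_j$ is isolated from the other $y_i$'s, its Voronoi cell has volume of order~$1$ and $L_j$ is of order~$1$ on much of it. So neither the claimed bound $\int_{V_j(y)} L_j^\alpha\,d\xi_j\lesssim n^{-(1+\alpha/d)}$ nor the restriction to distinct $y_j$'s is justified. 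You are in effect assuming the very uniformity that needs to be proved, and the combinatorics of controlling $\sum_{y}\prod_j\int L_j^\alpha$ over all tuples---which is what would actually be required---is not addressed.

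The paper avoids this difficulty entirely by a sequential argument rather than a witness union bound. It partitions $[0,1]^d$ into $n$ boxes of side $n^{-1/d}$ and processes the balls one by one, maintaining a ``collection'' that covers every omitted ball. At each step the collection grows by at most one, and it grows \emph{only} if the new radius lies within $\di\,n^{-1/d}$ of the current maximal radius in its box (otherwise either the new ball covers the old maximum or is covered by it). The concentration hypothesis makes this event $\mathcal F_k$-conditionally of probability $\le \kappa\,n^{-\alpha/d}$, so the number $S_m$ of such events satisfies $\E e^{\gamma S_m}\le(1+\kappa n^{-\alpha/d}(e^\gamma-1))^m$. Summing over the Poisson $N$ and optimising in $\gamma$ gives the bound directly. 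The point is that the paper never needs to track witness positions or Voronoi geometry; the only spatial information used is ``which of $n$ boxes does the centre fall in'', and the gain of $n^{-\alpha/d}$ per ball comes cleanly from the radius concentration via a martingale-type estimate.
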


\begin{rem}
If, for example, $R_1$ has a bounded density, then the assumption of proposition
holds with $\alpha=1$.

Note that for $\alpha>1$ one cannot expect to have a bound $\leq c r^\alpha$
in the assumption of the proposition.
\end{rem}

\begin{proof}
{\it Step 1:} Initial definitions.

In order to avoid cumbersome notations,
in this proof we assume that $n^{1/d}$ is an integer.

Let us divide the unit cube into $n$ equal boxes of side length $n^{-1/d}$.
Denote by $J_i\in\{1,\ldots,n\}$ the number of the box that contains $\xi_i$.
Further, denote by $N(k,j):=\# \{ i\leq k : J_i=j\}$ the number of balls among
the first $k$ that have their centres in the $j$-th box. If $N(k,j)>0$ we define
$$
   R^\ast(k,j):=\max\{ R_i : i\leq k, J_i=j \}
$$
the maximal radius of the balls having centres in the box $j$ among the first
$k$ balls.

{\it Step 2:} Building a collection.

We shall now gather certain balls (identified by their numbers) into a
{\it collection}.
We proceed by looking in the $k$-th step at the $k$-th ball, possibly adding it
to the collection and possibly deleting another ball from the collection. During
the whole time, the collection will maintain the following important properties:
\begin{itemize}
 \item[1)] In each step $k$, every ball (among the first $k$ balls) that is not
 included in the collection is covered by some other ball (from the first $k$
 balls).
 \item[2)] In each step $k$, for every box $j$, if $N(k,j)>0$ then a ball
 corresponding to the maximal radius with centre in that box, $R^\ast(k,j)$,
 {\it is} included in the collection.
\end{itemize}

Let us now describe the inspection procedure that will lead to a collection.
In this procedure the diameter (with respect to the norm we consider)
of the unit cube will be involved. We denote it $\di$.

At step $0$, we start with an empty collection, which certainly satisfies 1)
and 2).

When moving from $k$ to $k+1$, we first identify the box of the next ball,
$J_{k+1}$. If $N(k,J_{k+1})=0$ (i.e.\  there was no ball with the
centre located in the box $J_{k+1}$ so far),
we include the ball $k+1$ into the collection. Further, note that this step
certainly does preserve properties 1) and 2). Also note that for each box $j$ the
case $j=J_{k+1}$ and $N(k,J_{k+1})=0$  happens at most once.

If $N(k,J_{k+1})>0$, we compare $R_{k+1}$ with $R^\ast(k,J_{k+1})$, i.e.\
the radius of the current ball $k+1$ with the maximal radius in box
$J_{k+1}$:
\begin{itemize}
\item If $R_{k+1}>R^\ast(k,J_{k+1}) + \di n^{-1/d}$, then ball $k+1$ is
large enough to cover the ball that corresponds to the maximal radius
$R^\ast(k,J_{k+1})$, because the distance of their centres is at most
$\di n^{-1/d}$ (as the centres are in the same box). So, we may delete
from the collection the balls that correspond to the current maximal radius
$R^\ast(k,J_{k+1})$ (it {\it is} in the collection by property 2)) and add
the $(k+1)$-th ball. At the same time, $R_{k+1}$ becomes the maximal radius, i.e.
$R^\ast(k+1,J_{k+1})=R_{k+1}$. Certainly, this preserves 1), as any ball
that we deleted from the collection is covered by the ball that was added to
the collection. It also preserves property 2), since the newly added ball
is the one that corresponds to the maximal radius now.
\item If $R_{k+1}<R^\ast(k,J_{k+1}) - \di n^{-1/d}$, then the $(k+1)$-th
ball is not added to the collection. Note that it is covered
by the ball that corresponds to the maximal radius, because the distance between
their centres is at most $\di n^{-1/d}$ (the centres being in the same box).
This shows that property 1) is preserved, and certainly 2) is preserved, because
the maximal radius is unchanged, $R^\ast(k+1,J_{k+1})=R^\ast(k,J_{k+1})$.
\item Finally, if
$R_{k+1} \in [R^\ast(k,J_{k+1})-\di n^{-1/d},R^\ast(k,J_{k+1})+\di n^{-1/d}]$,
we do add the $(k+1)$-th ball into the collection. It may have the new maximal radius
or not, in both cases (since we do not exclude any ball from the collection) properties
1) and 2) are preserved.
\end{itemize}

We now count how often the last of the three cases above occurs. Let us define
the corresponding event
$$
  A_{k+1}:=\left\lbrace N(k,J_{k+1})>0, R_{k+1}
  \in \left[R^\ast(k,J_{k+1})-\frac{\di}{n^{1/d}},R^\ast(k,J_{k+1})+\frac{\di}{ n^{1/d}}\right]
  \right\rbrace
$$
and denote by
$$
   S_m:= \sum_{k=1}^m \ind_{A_k}
$$
the number of occurrences of the last of the three cases up to $m$ steps
of the algorithm. Let us further denote by $K_c(m)$ the size of the
collection after $m$ steps, and set
$$
   K'(m):= m - \sum_{k=1}^m \ind_{ \{ \exists i \neq k : B(\xi_k,R_k)\subseteq B(\xi_i,R_i)\} }
$$
for the number of balls (with index $\leq m$) that are not covered by {\it some} other ball.
The major observation is that, because of property 1) and the fact that in each step of the
algorithm the number of balls increased by at most one (either because we had $N(k,J_{k+1})=0$
or because $A_{k+1}$ occured), we have, respectively,
\begin{equation} \label{eqn:secondestimate}
   K'(m)\leq K_c(m) \leq n + S_m.
\end{equation}

Another important observation is that for $K$ from the statement of the proposition and the
Poisson random variable  $N$ we have
\begin{equation} \label{eqn:firstestimate}
   K \leq K'(N).
\end{equation}

{\it Step 3:} Evaluation of the probability of one $A$-event given the past.

Let the $\sigma$-fields $\FF_k$ be defined by
$$
   \FF_k := \sigma( (\xi_i)_{i\leq k+1}; (R_i)_{i\leq k}), \qquad k=0,1,2,\ldots.
$$
These $\sigma$-fields represent the information about the centres and radii up
to step $k$.

We show that there is a number $\kappa>0$ (only depending on the law of the radii)
such that for each $k$
\begin{equation} \label{eqn:aevent}
   p_{k+1} := \P[ A_{k+1} | \FF_k] \leq \kappa \, n^{-\alpha/d},
\end{equation}

To see (\ref{eqn:aevent}), note that
\begin{eqnarray*}
    p_{k+1} &:=&  \P[ A_{k+1} | \FF_k]
\\
    &=& \P[ R_{k+1} \in
    [R^\ast(k,J_{k+1})-\di n^{-1/d},R^\ast(k,J_{k+1})+\di n^{-1/d}]
    \big| \FF_k ].
\end{eqnarray*}
Note that $R^\ast(k,J_{k+1})$ is measurable w.r.t.\ $\FF_k$ and
$R_{k+1}$ is independent of $\FF_k$. Therefore, we can estimate
$$
   p_{k+1} \leq \sup_{x>0} \P[ R_{k+1} \in [x-\di n^{-1/d},x+\di n^{-1/d}]
    \leq \di\, c\, n^{-\alpha/d} := \kappa \, n^{-\alpha/d},
$$
where we used the bound for the concentration function of $R_{k+1}$
from the proposition's  assumption.

{\it Step 4:} Majorization of $S_m$.
% by a sum of independent Bernoulli random variables.

%For large enough $n$, we may assume that
%$\Delta=\Delta(n):=\kappa \sqrt{d} n^{-1/d}$ is smaller than $1$.

Set $\Delta=\Delta(n):=\kappa  n^{-\alpha/d}$.

We shall prove by induction that for any $\gamma>0$ and any $m\in\N$ we have
\begin{equation} \label{eqn:majorization1}
   \E \exp( \gamma S_m )\leq (\Delta (e^{\gamma}-1)  +1)^m.
\end{equation}
% where $B_1$ is a Bernoulli random variable with success parameter $\Delta$. % and $\Delta:=\kappa \sqrt{d} n^{-1/d}$.

Clearly the estimate holds for $m=0$. Assume the estimate holds for $m=k$. Then
\begin{eqnarray*}
   \E \exp( \gamma S_{k+1} )
   &=& \E [ \E[ \exp( \gamma (S_{k} + \ind_{A_{k+1}})) | \FF_k ]]
\\
   &=& \E [ \exp( \gamma S_{k} ) \E[ \exp( \gamma \ind_{A_{k+1}}) | \FF_k ] ]
\\
   &=& \E [ \exp( \gamma S_{k} ) \E[ e^\gamma \ind_{A_{k+1}} + \ind_{A_{k+1}^c}
       \big| \FF_k ] ]
\\
   &=& \E[ \exp( \gamma S_{k} )  ( e^\gamma p_{k+1}  + (1-p_{k+1}))]
\\
   &=& \E[ \exp( \gamma S_{k} )  ((e^\gamma-1) p_{k+1}  + 1)]
\\
   &\leq & \E[ \exp( \gamma S_{k} )  ((e^\gamma-1) \Delta  + 1)]
% \\
% &= & \E[ \exp( \gamma (S_{k} ) ] \cdot (e^\gamma \Delta  + (1-\Delta)),
\end{eqnarray*}
where we used (\ref{eqn:aevent}) in the last step. This shows the claim
in (\ref{eqn:majorization1}).

{\it Step 5:} Final computations.
% We are now ready to conclude the proof of the lemma.

Fix $B\in\N$, $B\geq 2$. Then due to (\ref{eqn:firstestimate}),
(\ref{eqn:secondestimate}), and (\ref{eqn:majorization1}) we have
for any $\gamma>0$
\begin{eqnarray*}
   \P[ K\geq B n ] &\leq& \P[ K'(N) \geq B n]
\\
   &=& \sum_{m=0}^\infty \P[ N=m] \cdot \P[ K'(m) \geq B n]
\\
   &\leq & \sum_{m=0}^\infty \P[ N=m]  \cdot  \P[ S_m \geq (B-1) n]
\\
   &\leq & \sum_{m=0}^\infty \P[ N=m] \cdot  \E[ \exp( \gamma S_m ) ]
   \, e^{-\gamma (B-1)n}
\\
   &\leq & \sum_{m=0}^\infty \frac{\lambda^m}{m!} e^{-\lambda} \cdot
   \left( \Delta (e^\gamma-1)  + 1\right)^m \, e^{-\gamma (B-1)n}
\\
   &= & e^{-\lambda-\gamma (B-1)n}\, \sum_{m=0}^\infty \frac{\lambda^m}{m!}
   \cdot  \left( \Delta (e^\gamma-1)  + 1\right)^m
\\
   &= & \exp\left( -\gamma (B-1)n + \lambda  \Delta (e^\gamma-1)  \right).
\end{eqnarray*}

Choosing $\gamma:=\log ((B-1)n/(\lambda\Delta))$ (and $n$ large enough to
ensure that $\gamma>0$), we obtain the estimate
$$
   \log \P[ K\geq B n ] \leq -(B-1) n \log ((B-1)n/(\lambda\Delta))
   + (B-1) n - \lambda\Delta.
$$

Since $(B-1)n/\Delta\sim  (B-1) n^{1+\alpha/d}/\kappa$, dividing the last
estimate by $B n\log (B n)$ and letting $n\to \infty$ gives
$$
    \limsup_{n\to \infty} \frac{\log \P[ K\geq n]}{ n \log n}
    = \limsup_{n\to \infty} \frac{\log \P[ K\geq B n]}{B n \log (Bn)}
    \leq - \frac{B-1}{B} ( 1+ \alpha/d).
$$
Letting $B\to \infty$ shows the claim.
\end{proof}

\begin{rem}
We did not use the fact that the $(\xi_i)$ are uniformly distributed,
i.e.\ any distribution on $[0,1]^d$ works.
\end{rem}

\begin{rem}
The same argument works if the $(\xi_i)$ take values in a totally bounded
metric space: Let us denote the covering numbers of that space by $N(\eps)$,
$\eps>0$. Let
$$
   Q_R(r):=\sup_{x>0} \P\big[ R_1 \in [x,x+r]\big]
$$
be the concentration function of $R$. Then the above proof can be modified
to show that for any $\eps>0$
$$
    K(m) \leq N(\eps) + S_m
$$
where $S_m$ is a random variable satisfying
$$
    \E e^{\gamma S_m} \leq ( Q_R(\eps) (e^\gamma -1) + 1)^m,
    \qquad \forall m\in\N ~ \forall \gamma>0
$$
and
$$
     K(m):= \min\{ r\geq 1 | \exists i_1,\ldots, i_r \in \{1,\ldots,m\} :
     \bigcup_{s=1}^m B(\xi_{i_s},R_{i_s}) = \bigcup_{s=1}^r B(\xi_{i_s},R_{i_s}) \},
$$
\end{rem}

\subsection{Constant radius for $\ell_1$-, $\ell_2$-norms} \label{sec:constantupper}

\subsubsection{Constant radius: $\ell_1$-norm}

The following result completely matches the lower bound from Proposition
\ref{prop:lowerc1}.

\begin{prop} \label{prop:upperc1}
Assume that the radius is a.s.\ constant $R_1\equiv c<1$ and we work
with $\ell_1$-balls. If $d\geq2$, then
\begin{equation} \label{eqn:constl1upper}
    \P[ K \geq n ] \leq \exp\left( - \left(1+\frac{1}{d-1}\right) n \log n (1+o(1)) \right),
    \qquad \text{as $n\to\infty$.}
\end{equation}
\end{prop}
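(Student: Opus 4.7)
The generic upper bound of Proposition~\ref{prop:tailofK} does not apply, since its concentration-function hypothesis $\sup_x\P[R_1\in[x,x+r]]\le c\,r^\alpha$ fails trivially for constant $R_1$ (any window either contains $c$ or not). I would therefore adapt the inspection algorithm underlying that proof by replacing its radius-based decision rule with a geometric one tailored to the cross-polytope shape of the $\ell_1$-ball.

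The setup I propose is to partition $[0,1]^d$ into $n$ ``columns'' whose bases are $(d-1)$-dimensional cubes of side $\eta_n := n^{-1/(d-1)}$, running parallel to the $e_d$-axis. Balls are processed in order of index, and I maintain a collection that always represents the union of balls seen so far. When ball $k+1$ arrives in column $J_{k+1}$, I compare $\xi_{k+1}^{(d)}$ with the current maximum $\xi^{\ast,d}(k,J_{k+1})$ of the $d$-th coordinate among collection balls in the same column. The three cases of the generic algorithm carry over: if $\xi_{k+1}^{(d)}$ significantly exceeds $\xi^{\ast,d}$, replace the old extremal; if it is significantly below, discard $\xi_{k+1}$ (it is then covered); and in the borderline case $|\xi_{k+1}^{(d)}-\xi^{\ast,d}|\le C\eta_n$ (the ``undecided'' event $A_{k+1}$) add $\xi_{k+1}$ to the collection.

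The validity of the first two regimes rests on a coordinatewise $\ell_1$-covering computation: two centres in the same column differ by at most $\eta_n$ in each of the first $d-1$ coordinates, so a gap in $\xi^{(d)}$ exceeding $(d-1)\eta_n$ guarantees that the top vertex $\xi_{\mathrm{below}}+c e_d$ lies in $B(\xi_{\mathrm{above}},c)$. To cover \emph{all} of $B(\xi_{\mathrm{below}},c)$ (and not just its top vertex), I would run the algorithm $2d$ times in parallel---once for each half-direction $\pm e_m$, $m=1,\dots,d$---and take the union of the resulting collections, so that any given ball is covered by extremals in all $2d$ directions; the resulting constant-factor loss is absorbed by the $(1+o(1))$ in the final exponent.

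The decisive probabilistic estimate is that the borderline event has conditional probability at most $2M_k\eta_n^d\le 2n\eta_n^d=2\eta_n=2n^{-1/(d-1)}=:\Delta$ uniformly in $k$, where $M_k\le n$ counts occupied columns; the slab associated to each occupied column has base $\eta_n^{d-1}$ and thickness $2\eta_n$ in the $d$-th coordinate. This $\Delta$ plays precisely the role of $\kappa n^{-\alpha/d}$ in Proposition~\ref{prop:tailofK} with effective $\alpha/d=1/(d-1)$. Feeding $\Delta$ verbatim into the Chernoff computation of Step~5 of that proof, with the choice $\gamma=\log((B-1)n/(\lambda\Delta))=(1+1/(d-1))\log n+O(1)$, yields $\log\P[K\ge Bn]\le -(B-1)(1+1/(d-1))n\log n+O(n)$, whence dividing by $Bn\log(Bn)$ and letting $B\to\infty$ gives the claimed exponent. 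The principal technical obstacle is the geometric covering statement in the ``below'' regime: while coverage of the top vertex is an elementary coordinate calculation, covering the interior of $B(\xi_{\mathrm{below}},c)$ exploits the coordinate-separable structure of the $\ell_1$-norm in an essential way, which is also why the matching upper bound is not claimed for $\ell_2$-balls in Theorem~\ref{thm:constantgeneral}.
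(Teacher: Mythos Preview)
Your approach has a genuine gap in the geometric covering step. The claim that running the inspection algorithm $2d$ times in parallel (once per half-axis $\pm e_m$) produces a collection whose union still equals $S$ is false. Take $d=3$, radius $c=0.1$, column width $\eta_n\ll 0.1$, and the seven centres $\xi_0=(0.5,0.5,0.5)$ together with $\xi_0\pm 0.4\,e_m$ for $m=1,2,3$. In the $e_m^+$ algorithm the column of $\xi_0$ (based on the remaining $d-1$ coordinates) contains only $\xi_0$ and $\xi_0\pm 0.4\,e_m$; the centre $\xi_0$ is dominated and discarded. Symmetrically it is discarded from all $2d$ collections. Yet each of the six surrounding balls is at $\ell_1$-distance $0.4>2c$ from $\xi_0$, so their union does not even meet $B(\xi_0,c)$. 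Thus the bound $K\le 2d(n+S_m)$ does not follow. The underlying heuristic ``covering the $2d$ vertices of the cross-polytope by same-radius balls forces covering the whole cross-polytope'' is simply wrong: with equal radii no ball contains another, so coverage of $B(\xi_{\mathrm{below}},c)$ must come from many mutually overlapping balls close to $\xi_{\mathrm{below}}$, and nothing in the algorithm produces such a configuration.

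The paper's argument is non-algorithmic and rests on a different geometric fact. Starting from an irreducible representation with $K$ balls, each ball $B(\theta_i,r)$ carries a witness point $\nu_i$ lying in no other ball; the balls are split into $2d$ classes $J_m^{\pm}$ according to the dominant coordinate of $\Delta_i:=\nu_i-\theta_i$ (plus a bounded exceptional class $J_0$). The key Lemma~\ref{lem:thetad1} shows that for $i,j\in J_d^+$ one has $|\theta_i^{(d)}-\theta_j^{(d)}|\notin[\,\|\sigma\theta_i-\sigma\theta_j\|_1,\,c_3\,]$, since otherwise the witness $\nu_j$ would fall inside $B(\theta_i,r)$. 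This forbids moderate vertical gaps between centres whose projections are close, so within any column of side $A n^{-1/(d-1)}$ the centres from a fixed class hit only $O(1)$ boxes of the same mesh. Hence all $K$ centres lie in a union $U$ of $O(n/A^{d-1})$ such boxes, of total volume $O(A\,n^{-1/(d-1)})$; a union bound over the possible $U$'s combined with the Poisson tail $\P[N_U\ge n]$ yields the exponent $1+\tfrac{1}{d-1}$ after letting $A\to\infty$.
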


\begin{proof}
The first three steps of the proof are deterministic ones,
while probability estimates appear in the fourth step.
\medskip

{\sl Step 1. Combining the balls in groups.}\
For a while, let the norm be arbitrary and let
\[
   S=\bigcup_{i=1}^K B(\theta_i,r_i)\ \cap \ [0,1]^d
\]
be an irreducible representation of the picture $S$. Then for every $i\leq K$
there exists a point $\nu_i\in B(\theta_i,r_i) \cap \ [0,1]^d$ such that
$\nu_i\not\in B(\theta_j,r_j)$ for $j\not=i$. Let $\Delta_i:=\nu_i-\theta_i$.

Let $r:=\min_{1\leq i\leq K} r_i$ and denote
$J_0:= \{i: ||\Delta_i|| \leq r/2\}$. Then for any distinct $i,j\in J_0$ we have
\[
   r\leq r_i< ||\nu_j- \theta_i|| \leq ||\nu_j-\nu_i|| + ||\nu_i- \theta_i||
   =  ||\nu_j-\nu_i||  + ||\Delta_i||  \leq  ||\nu_j-\nu_i||  + r/2.
\]
It follows that $||\nu_j-\nu_i|| > r/2$ and we conclude that
$\#\,J_0\leq c_1\, r^{-d}$ with a constant $c_1:=2^d / \Vol B(0,1)$ depending
only on the dimension and the norm.

Let now $i\not\in J_0$. Then
\[
   \max_{1 \leq m  \leq d} |\Delta_i^{(m)}| =||\Delta_i||_\infty \geq c_2 ||\Delta_i||
   > c_2\, r/2
\]
with $c_2$ depending only on the norm. Therefore, $i$ belongs to one of the $2d$ sets
\[
    J_m^+:=\{  i: \ \Delta_i^{(m)}> c_2\, r/2  \},
    \qquad
    J_m^-:=\{  i: \ \Delta_i^{(m)}< - c_2\, r/2  \}.
\]
From now on we fix one of these sets, say, $J_d^+$. For  $i\in J_d^+$ we have
\[
    \nu_i^{(d)}  - \theta_i^{(d)} = \Delta_i^{(d)}\geq c_2\, r/2.
\]
\medskip

{\sl Step 2. Evaluation of coordinate differences.}
We specify to the case of equal radii $r_1=\cdots=r_K=r$ and $\ell_1$-norm;
the subsequent constants $c_j$ are allowed to depend on $r$.

At this step we give a bound for the difference
$|\theta_i^{(d)}-\theta_j^{(d)}|$ for $i,j\in J_d^+$
and show that it can be either quite large or small.

Let $\sigma:\R^d\mapsto \R^{d}$ be the projection defined by
\[
  \sigma x:=(x^{(1)},\cdots, x^{(d-1)},0).
\]

\begin{lem} \label{lem:thetad1}
Let  $i,j\in J_d^+$ and $c_3:= c_2\, r/2$. Then
\be
   |\theta_i^{(d)}-\theta_j^{(d)}|
   \not\in [\, ||\sigma\theta_i-\sigma\theta_j||_1,  c_3 \, ].
\ee
\end{lem}

\begin{proof} [ of Lemma] Without loss of generality we may and will assume that
$i\not=j$ and $\theta_i^{(d)}>\theta_j^{(d)}$. Introduce an auxiliary point
$\psi:=(\theta_i^{(1)},\cdots,\theta_i^{(d-1)},\theta_j^{(d)})$.
We have
\begin{eqnarray} \notag
  ||\nu_j-\psi||_1
  &\le&  ||\nu_j-\theta_j||_1+ ||\theta_j-\psi||_1
\\   \notag
  &=& ||\nu_j-\theta_j||_1 + ||\sigma\theta_j-\sigma\theta_i||_1
\\  \label{eqn:up1}
  &\le& r + ||\sigma\theta_j-\sigma\theta_i||_1.
\end{eqnarray}

Assume, for the sake of contradiction, that
\be\label{eqn:up2}
   \theta_i^{(d)}\in
   [\, \theta_j^{(d)}+ ||\sigma\theta_i-\sigma\theta_j||_1,
     \theta_j^{(d)}+  c_3 \, ].
\ee
Since $j\in J_d^+$, \eqref{eqn:up2} yields
\[
   \nu_j^{(d)}\geq\theta_j^{(d)} + c_2 r /2 = \theta_j^{(d)} +c_3
   \geq\theta_i^{(d)}.
\]
On the other hand,
\be \label{eqn:up3}
   ||\nu_j-\psi||_1 =  ||\sigma\nu_j-\sigma\psi||_1 + \nu_j^{(d)}-\psi^{(d)}
   =||\sigma\nu_j-\sigma\theta_i||_1 + \nu_j^{(d)}-\theta_j^{(d)}.
\ee
Therefore, by using \eqref{eqn:up3},  \eqref{eqn:up2},  \eqref{eqn:up1}  we obtain
\begin{eqnarray*}
    ||\nu_j-\theta_i||_1
    &=& ||\sigma\nu_j -\sigma\theta_i||_1 + \nu_j^{(d)}- \theta_i^{(d)}
\\
    &=&  ||\nu_j-\psi||_1 -  (\nu_j^{(d)}-\theta_j^{(d)})  + \nu_j^{(d)}- \theta_i^{(d)}
\\
    &=&  ||\nu_j-\psi||_1   - (\theta_i^{(d)}  -\theta_j^{(d)})
\\
    &\le&  ||\nu_j-\psi||_1   - ||\sigma\theta_i-\sigma\theta_j||_1
\\
    &\le&   r + ||\sigma\theta_j-\sigma\theta_i||_1  - ||\sigma\theta_i-\sigma\theta_j||_1
    =r,
\end{eqnarray*}
which contradicts the assumption $\nu_j\not \in B(\theta_i,r)$ from
the definition of $\nu_j$. It follows that \eqref{eqn:up2} does not hold,
hence, the assertion of Lemma \ref{lem:thetad1} is true.
\end{proof}
\medskip

{\sl Step 3. Counting boxes' hits.}

Let us fix a large $A>0$ and cover $[0,1]^d$ with the following
collection of cubic boxes:
$$
    V_{\bark,k_d}:= \prod_{m=1}^d
    \left[ \frac{A k_m}{n^{1/(d-1)}},
    \frac{A(k_m+1)}{n^{1/(d-1)}}\right],
$$
with $k_m\in\{0,\ldots,\lfloor  A^{-1} n^{1/(d-1)}\rfloor\}$ for
$1 \leq m \leq d$ and multi-index $\bark:=(k_1,\ldots, k_{d-1})$.

Let us fix $\bark$, and evaluate the number of the corresponding
boxes hit by the ball centers:
\[
   N(\bark,d,+):= \# \{ k: \theta_i\in  V_{\bark,k} \textrm{for some } i\in J_d^+\}.
\]
Indeed, if $\theta_i\in  V_{\bark,\kappa_i}$ and  $\theta_j\in  V_{\bark,\kappa_j}$
for some $i,j\in J_d^+$,  then
\[
    ||\sigma\theta_i-\sigma \theta_j||_1 \leq (d-1)A n^{-1/(d-1)}
\]
and Lemma \ref{lem:thetad1} yields
\[
   |\theta_i^{(d)}-\theta_j^{(d)}|
   \not\in [\, (d-1)A n^{-1/(d-1)}  , c_3\, ].
\]
By splitting $[0,1]$ into $\lceil c_3^{-1} \rceil$ pieces of length less or equal to $c_3$
we see that if $\theta_i^{(d)},\theta_j^{(d)}$ belong to the same piece, then
$|\theta_i^{(d)}-\theta_j^{(d)}| \leq (d-1)A n^{-1/(d-1)}$. Hence,
$|\kappa_i-\kappa_j| \leq d$.
It follows immediately that
\[
    N(\bark,d,+) \leq  d \lceil c_3^{-1} \rceil =: c_4.
\]
By the symmetry of coordinates, the total number of hit boxes admits the estimate
\[
   \sum_{\bark}\sum_{m=1}^{d}  \left( N(\bark,m,+) +  N(\bark,m,-) \right)
    \leq   (2d c_4) \cdot  (n/A^{d-1})=:\frac{c_5 n}{A^{d-1}}.
\]

Let denote $\UU$ the ensemble of all possible unions of  $\lfloor\frac{c_5 n}{A^{d-1}}\rfloor$ boxes.

Notice that $\#\UU$,  the number of choices of $\lfloor\frac{c_5 n}{A^{d-1}}\rfloor$ boxes
from the total number of $\frac{n^{d/(d-1)}}{A^{d}}$ boxes, admits the bound
\begin{eqnarray} \notag
   \#\UU &\leq&
   \exp\left(\frac{c_5 n}{A^{d-1}} \ln\left( \frac{n^{d/(d-1)}}{A^{d}}\right) \right)
\\  \label{eqn:UU}
   &=& \exp\left(    \frac{c_5 d n}{A^{d-1}(d-1)} \ln n (1+o(1)) \right).
\end{eqnarray}
For every $U\in\UU$ we have the bound
\be\label{eqn:volU}
   \Vol(U)  \leq \frac{c_5 n}{A^{d-1}} \cdot \left(\frac{A}{n^{1/(d-1)}} \right)^{d}
   = \frac{A c_5 }{n^{1/(d-1)}}.
\ee
\medskip

{\sl Step 4. Probabilistic estimates.}

Recall that
\[
   K=\#J_0+  \#\left(\bigcup_{m=1}^{d}(J_d^+ \cup J_d^-)\right)
   =: K^{(0)} + K^{(\pm)}.
\]

Notice that $K^{(\pm)}$ centers simultaneously belong to some random $U\in \UU$, which
can be written as
\[
   N_U:= \#\{i:\xi_i\in U\} \geq  K^{(\pm)}.
\]

Recall that $K^{(0)} \leq c_1\, r^{-d} =:c_6$.
Therefore,  we obtain
\begin{eqnarray*}
\P[K \geq n] &\leq& \P[ K^{(\pm)} \geq n - c_6\, ]
\\
&\leq& \sum_{U\in\UU}  \P[ N_U \geq n - c_6 ]
\\
&\leq& \#\UU \cdot   \max_{U\in\UU} \P[ N_U \geq n - c_6 ].
\end{eqnarray*}
Since for every deterministic $U$ the random variable $N_U$ is a
Poissonian one  with expectation
$\lambda \Vol(U)$,
by using \eqref{eqn:UU}, \eqref{eqn:volU} we have
\begin{eqnarray*}
\P[K \geq n]  &\leq&  \exp\left(    \frac{c_5 d n}{A^{d-1}(d-1)} \ln n (1+o(1)) \right)
           \left(\frac{\lambda\Vol(U)e}{n-c_6}\right)^{n-c_6}
\\
 &\leq&   \exp\left(    \frac{c_5 d n}{A^{d-1}(d-1)} \ln n (1+o(1)) \right)
           \left(\frac{(\lambda A c_5\, e) \,  n^{-1/(d-1)}}{n-c_6}
           \right)^{n-c_6}
\\
 &\leq&   \exp\left(   \left( \frac{c_5 d }{A^{d-1}(d-1)}
            -\left(1+ \frac{1}{d-1}\right)
           \right)  n \ln n (1+o(1)) \right).
\end{eqnarray*}
Since $A$ can be chosen arbitrarily large, we get (\ref{eqn:constl1upper}).
%\[
%   \P[K \geq n] \leq  \exp\left( -\left(1+ \frac{1}{d-1} \right)
%                     n \ln n (1+o(1)) \right),
%\]
%as required.
\end{proof}

\subsubsection{Constant radius: $\ell_2$-norm}

The following result corresponds to the lower bound from Proposition
\ref{prop:lowerc2} but does not exactly match it.

\begin{prop} \label{prop:upperc2}
Assume that the radius is a.s.\ constant $R_1\equiv r <1$ and we work
with $\ell_2$-balls. If $d \geq 2$, then
$$
    \P[ K \geq n ] \leq
    \exp\left( - \left(1+\frac{1}{d-1}\right) n \log n (1+o(1)) \right),
    \qquad \text{as $n\to\infty$.}
$$
\end{prop}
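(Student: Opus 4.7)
The strategy is to closely mimic the proof of Proposition~\ref{prop:upperc1}, which already pointed out that Step~1 (grouping the balls via the directions of $\Delta_i := \nu_i - \theta_i$) is valid in every norm. Starting from an irreducible representation of $S$, one picks witness points $\nu_i\in B(\theta_i,r)\setminus\bigcup_{j\neq i}B(\theta_j,r)$, separates the indices into a bounded-cardinality set $J_0$ of ``centred'' witnesses and the sets $J_m^{\pm}$ in which one coordinate of $\Delta_i$ is at least $c_2 r/2$, and then fixes one of the latter, say $J_d^+$. The genuinely new input for the $\ell_2$-case is an analogue of Lemma~\ref{lem:thetad1}: for $i,j\in J_d^+$ one should forbid $|\theta_i^{(d)}-\theta_j^{(d)}|$ from taking intermediate values between a constant multiple of $\|\sigma\theta_i-\sigma\theta_j\|_2$ and the absolute constant $c_3:=c_2 r/2$.

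\textbf{The gap lemma.} Fix $i,j\in J_d^+$, assume without loss of generality that $b:=\theta_i^{(d)}-\theta_j^{(d)}\ge 0$, and set $a:=\nu_j^{(d)}-\theta_j^{(d)}\ge c_3$. The containment $\nu_j\in B(\theta_j,r)$ gives $\|\sigma\nu_j-\sigma\theta_j\|_2\le \sqrt{r^2-a^2}$, while $\nu_j\notin B(\theta_i,r)$ gives $\|\sigma\nu_j-\sigma\theta_i\|_2>\sqrt{r^2-(a-b)^2}$ as soon as $|a-b|\le r$. Under the assumption $b\le c_3$ one has $b\le c_3\le a\le r$, hence $|a-b|\le a$, and the elementary identity
\[
   \sqrt{r^2-(a-b)^2}-\sqrt{r^2-a^2}=\frac{b(2a-b)}{\sqrt{r^2-(a-b)^2}+\sqrt{r^2-a^2}}\ge\frac{b\,c_3}{2r}
\]
combined with the triangle inequality $\|\sigma\theta_i-\sigma\theta_j\|_2\ge \|\sigma\nu_j-\sigma\theta_i\|_2-\|\sigma\nu_j-\sigma\theta_j\|_2$ yields the desired gap
\[
   b\le \frac{2r}{c_3}\,\|\sigma\theta_i-\sigma\theta_j\|_2.
\]
With this lemma in hand, Steps~3 and~4 of the $\ell_1$-argument transfer almost verbatim. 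Covering $[0,1]^d$ by cubic boxes of side $A\,n^{-1/(d-1)}$ one has $\|\sigma\theta_i-\sigma\theta_j\|_2\le \sqrt{d-1}\,A\,n^{-1/(d-1)}$ whenever the first $d-1$ box-coordinates coincide; the gap then forces each piece of $[0,1]$ of length $c_3$ in the $d$-th direction to receive only $O(1)$ distinct $\kappa_i$-indices, so $N(\bark,d,\pm)=O(1)$ and the total number of hit boxes is $O(n/A^{d-1})$ with volume $O(A\,n^{-1/(d-1)})$. The Poisson union bound of Step~4 then delivers the exponent $(1+1/(d-1))\,n\log n$ after letting $A\to\infty$.

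\textbf{Main obstacle.} The $\ell_1$-proof of Lemma~\ref{lem:thetad1} is essentially algebraic, because $\|\cdot\|_1$ decouples coordinate by coordinate. In $\ell_2$ this decoupling is lost and the dichotomy must instead be extracted from the concave-quadratic shape of the vertical cross-section of a Euclidean ball, via the identity displayed above. The delicate point is that the prefactor $2r/c_3$ multiplying $\|\sigma\theta_i-\sigma\theta_j\|_2$ is independent of $n$; this is exactly what keeps $N(\bark,d,\pm)$ uniformly bounded and preserves the $1+1/(d-1)$ exponent. The resulting upper bound does not match the lower bound of Proposition~\ref{prop:lowerc2}, so it presumably is not sharp; closing that gap would require exploiting the tighter Euclidean geometry in the box-counting step, which is left as a separate problem.
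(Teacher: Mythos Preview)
Your proposal is correct and follows essentially the same route as the paper: the same four-step structure is invoked, and the only new ingredient is an $\ell_2$-version of the gap lemma with the identical conclusion $|\theta_i^{(d)}-\theta_j^{(d)}|\notin[\,c\,\|\sigma\theta_i-\sigma\theta_j\|_2,\,c_3\,]$. Your proof of that lemma differs slightly in packaging---you use the Pythagorean decomposition and the difference-of-square-roots identity, whereas the paper expands $\|\nu_j-\theta_i\|_2^2$ through the auxiliary point $\psi=(\sigma\theta_i,\theta_j^{(d)})$ and bounds $(r+\|\sigma\theta_i-\sigma\theta_j\|_2)^2$---but the two computations are equivalent and yield the same (order of) constant, after which Steps~3 and~4 carry over verbatim.
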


\begin{proof}
 Steps 1 and 4 of the proof are exactly the same as in the proof
of Proposition~\ref{prop:upperc1}. Step~3 is almost identical, up to an appropriate
modification of the constant $c_4$. We do have to modify Step~2, where
the particular form of the norm is used. The following is an appropriate modification
of Lemma~\ref{lem:thetad1}.

\begin{lem} \label{lem:thetad2}
Let  $i,j\in J_d^+$, $c_3:= c_2\, r/2$, $c'_3:= (2r+\sqrt{d-1})/c_3$. Then
\be
   |\theta_i^{(d)}-\theta_j^{(d)}|
   \not\in [\, c'_3 ||\sigma\theta_i-\sigma\theta_j||_2,  c_3 \, ].
\ee
\end{lem}

\begin{proof} [ of Lemma] Without loss of generality we may and do assume that
$i\not=j$ and $\theta_i^{(d)}>\theta_j^{(d)}$. Introduce the same auxiliary point
$\psi:=(\theta_i^{(1)},\cdots,\theta_i^{(d-1)},\theta_j^{(d)})$ as before.
We have
\begin{eqnarray} \notag
  ||\nu_j-\psi||_2
  &\le&  ||\nu_j-\theta_j||_2+ ||\theta_j-\psi||_2
 \\   \notag
  &=& ||\nu_j-\theta_j||_2 + ||\sigma\theta_j-\sigma\theta_i||_2
 \\ \label{eqn:up21}
  &\leq& r + ||\sigma\theta_j-\sigma\theta_i||_2.
\end{eqnarray}

Assume temporarily that
\be\label{eqn:up22}
   \theta_i^{(d)}\in
   [\, \theta_j^{(d)}+  c'_3 ||\sigma\theta_i-\sigma\theta_j||_2,
     \theta_j^{(d)}+  c_3 \, ].
\ee
Since $j\in J_d^+$, \eqref{eqn:up22} yields
\be  \label{eqn:up22a}
   \nu_j^{(d)}\geq \theta_j^{(d)} + c_2 r /2 = \theta_j^{(d)} +c_3 \geq \theta_i^{(d)}.
\ee
On the other hand,
\be \label{eqn:up23}
   ||\nu_j-\psi||_2^2 =  ||\sigma\nu_j-\sigma\psi||_2^2 + (\nu_j^{(d)}-\psi^{(d)})^2
   =||\sigma\nu_j-\sigma\theta_i||_2^2 + (\nu_j^{(d)}-\theta_j^{(d)})^2.
\ee
Therefore, by using \eqref{eqn:up23}, \eqref{eqn:up21}, \eqref{eqn:up22}, \eqref{eqn:up22a}
and the definition of $c'_3$, we obtain
\begin{eqnarray*}
    ||\nu_j-\theta_i||_2^2
    &=& ||\sigma\nu_j -\sigma\theta_i||_2^2 + (\nu_j^{(d)}- \theta_i^{(d)})^2
\\
    &=&  ||\nu_j-\psi||_2^2 -  (\nu_j^{(d)}-\theta_j^{(d)})^2  + (\nu_j^{(d)}- \theta_i^{(d)})^2
\\
    &=&  ||\nu_j-\psi||_2^2   - (\theta_i^{(d)}
         -\theta_j^{(d)}) (2\nu_j^{(d)}- \theta_i^{(d)}- \theta_j^{(d)} )
\\
    &\le& \left(r + ||\sigma\theta_j-\sigma\theta_i||_2\right)^2
         - (\theta_i^{(d)}  -\theta_j^{(d)})
           (\nu_j^{(d)}-  \theta_j^{(d)} )
\\
    &\le&  r^2 +  (2r+\sqrt{d-1}) ||\sigma\theta_j-\sigma\theta_i||_2
            - c'_3 ||\sigma\theta_i-\sigma\theta_j||_2 \cdot c_3
\\
    &=& r^2,
\end{eqnarray*}
which contradicts the assumption $\nu_j\not \in B(\theta_i,r)$ from
the definition of $\nu_j$. It follows that \eqref{eqn:up22} does not hold,
hence, the assertion of Lemma \ref{lem:thetad2} is true.
\end{proof}

The rest of the proof of Proposition \ref{prop:upperc2} goes exactly as
 in the $\ell_1$-norm case.
\end{proof}

\section{Dimension $d=1$}  \label{sec:dimension1}

 \begin{proof}[ of Theorem~\ref{thm:d1}]
 The lower bound follows from Proposition \ref{prop:lowerSB} which is valid
 for any dimension. The upper bound is based on the following elementary lemma.

 \begin{lem} \label{lem:length}
 Let
 \be \label{eqn:Krepr}
    S=\bigcup_{i=1}^K \ [x_i-r_i,x_i+r_i]\ \cap \ [0,1]
 \ee
 be an irreducible representation of a one-dimensional picture $S$. Then
 \be \label{eqn:Klength}
    \sum_{i=1}^K \min\{r_i,1\}  \leq 2.
 \ee
 \end{lem}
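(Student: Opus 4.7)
The plan is to exploit the rigid structure of an irreducible family of intervals on the line. After sorting the intervals by their left endpoint, irreducibility will force the right endpoints to be strictly increasing as well and make non-consecutive intervals disjoint, so that the odd-indexed intervals $I_1,I_3,\dots$ are pairwise disjoint as subsets of $\R$, and likewise the even-indexed ones. Each of the two families then contributes at most $1$ to the sum of the intersection-lengths with $[0,1]$, for a total bound of $2$.

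First I would sort the intervals $I_i:=[\ell_i,\rho_i]$ with $\ell_i:=x_i-r_i$ and $\rho_i:=x_i+r_i$ so that $\ell_1<\cdots<\ell_K$; the $\ell_i$ are automatically distinct, because equal left endpoints force one interval to contain the other, contradicting irreducibility. The same reasoning (if $\ell_i<\ell_j$ and $\rho_i\geq\rho_j$ then $I_j\subseteq I_i$) shows that $\rho_1<\cdots<\rho_K$. Next I would prove $\rho_{i-1}<\ell_{i+1}$ for every $2\leq i\leq K-1$: otherwise $I_{i-1}\cup I_{i+1}=[\ell_{i-1},\rho_{i+1}]\supseteq I_i$, so $I_i$ could be removed without changing $S$. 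This immediately yields that $I_1,I_3,\dots$ and $I_2,I_4,\dots$ are each pairwise disjoint as subsets of $\R$.

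The second ingredient is a geometric length comparison. Since the centres $x_i$ lie in $[0,1]$, a short case analysis gives
\[
|I_i \cap [0,1]| \geq \min\{r_i,1\}.
\]
Indeed, if $I_i\subseteq[0,1]$ the left-hand side equals $2r_i$; if exactly one of $\ell_i,\rho_i$ lies outside $[0,1]$ then, using $x_i\in[0,1]$, the intersection has length $\rho_i=x_i+r_i\geq r_i$ or $1-\ell_i=1-x_i+r_i\geq r_i$; and if both $\ell_i<0$ and $\rho_i>1$ then $I_i\supseteq[0,1]$ and the length equals $1\geq\min\{r_i,1\}$. Since the sets $I_i\cap[0,1]$ with $i$ odd are disjoint subsets of $[0,1]$, summing gives $\sum_{i\text{ odd}}\min\{r_i,1\}\leq 1$, and analogously for even indices; adding the two estimates produces $\sum_{i=1}^K \min\{r_i,1\}\leq 2$. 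The only mildly delicate point is the case analysis in this length comparison, which rests on $x_i\in[0,1]$; everything else is a direct consequence of the way irreducibility rigidifies the arrangement of the intervals on the line.
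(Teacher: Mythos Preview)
Your argument is correct. Both your proof and the paper's rest on the same two ingredients: the length comparison $|I_i\cap[0,1]|\geq\min\{r_i,1\}$ (which the paper states exactly as you do, using $x_i\in[0,1]$), and the fact that an irreducible family of intervals on the line has covering multiplicity at most~$2$. The difference is in how this second fact is packaged. You obtain it \emph{structurally}: after sorting by left endpoint you prove $\rho_{i-1}<\ell_{i+1}$, giving an explicit $2$-colouring of the intervals (odd/even) into two disjoint subfamilies, and then bound each subfamily's total intersection with $[0,1]$ by~$1$. The paper obtains it \emph{pointwise}: for any $x\in[0,1]$ covered by the family, the interval with smallest left endpoint and the one with largest right endpoint together cover every other interval through $x$, so at most two intervals pass through $x$; the bound then follows by Fubini, $\sum_i |I_i\cap[0,1]|=\int_0^1 \#\{i:x\in I_i\}\,dx\leq 2$. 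Your route yields a slightly stronger structural statement (the explicit bipartition), at the cost of a little more bookkeeping; the paper's route is shorter and avoids sorting altogether.
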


 \begin{proof}[ of Lemma \ref{lem:length}]
 We notice first that any point $x\in[0,1]$ is covered by at most two intervals
 $[x_i-r_i,x_i+r_i]$. The corresponding indices are those where
 $\min_{i: x\in[x_i-r_i,x_i+r_i]} (x_i-r_i)$ and
 $\max_{i: x\in[x_i-r_i,x_i+r_i]} (x_i+r_i)$ are attained.
 Would there be another interval covering $x$, it would be covered by those two
 we have chosen which contradicts to irreducibility of \eqref{eqn:Krepr}.

Using that for any $x\in[0,1],r>0$
\[
   \Volone\left([x-r,x+r]\cap[0,1]\right)  \geq \min\{r,1\},
 \]
we have by integration
 \begin{eqnarray*}
    \sum_{i=1}^K \min\{r_i,1\} &\leq&  \sum_{i=1}^K
    \Volone\left([x_i-r_i,x_i+r_i]\cap[0,1]\right)
 \\
    &=& \sum_{i=1}^K \int_{0}^{1} \ed{x\in[x_i-r_i,x_i+r_i]} dx
 \\
 &=& \int_{0}^{1}\left( \sum_{i=1}^K  \ed{x\in[x_i-r_i,x_i+r_i]}\right) dx
 \leq \int_{0}^{1} 2 \, dx =2,
 \end{eqnarray*}
 and \eqref{eqn:Klength} follows.
 \end{proof}

 We derive now the upper bound in \eqref{eqn:d1}. Fix a large $M>0$ and observe that
 the identity
 \[
    K=\#\{i: r_i>2M/n\} + \#\{i: r_i \leq 2M/n\} =: K_1+K_2.
 \]
 The bound \eqref{eqn:Klength} ensures that $K_1 \leq n/M$ for $n \geq 2M$,
 while $K_2$
 is bounded by the total number of balls of radius less or equal to $2M/n$
 in the {\it initial} representation of the picture $S$. The latter is
 a Poissonian random variable with expectation
 $a:= \lambda \int_0^{2M/n} p(z)dz  \leq c\, (M/n)^\alpha$.
 Therefore,
\begin{eqnarray*}
    \P[K\geq n] &\leq& \P[K_2\geq (1-1/M)n]
\\
    &\le& e^{-a} \frac{a^{(1-1/M)n}}{[(1-1/M)n]!}\, (1+o(1))
\\
    &\sim&
    \frac{a^{(1-1/M)n} e^{(1-1/M)n}}{\sqrt{2\pi (1-1/M) n}[(1-1/M)n]^{(1-1/M)n}}
\\
   &\le&  \frac{[ c\, (M/n)^\alpha]^{(1-1/M)n} e^{(1-1/M)n}}{[(1-1/M)n]^{(1-1/M)n}}
\\
    &=& \exp( - (1+\alpha)(1-1/M) n \log n\, (1+o(1)) ).
\end{eqnarray*}
Letting $M\to\infty$ proves the required upper bound.
\end{proof}

\begin{rem} If $\P[R_i\geq r]=1$ for some $r>0$, then it follows from
\eqref{eqn:Klength} that $\P[K\leq \tfrac{2}{\min\{r,1\}}]=1$.
 This means that if the radii are separated from zero, the large
deviations for $K$ are trivial. \label{rem:empty}
\end{rem}

\section{The coding problem}  \label{sec:quantization}

\subsection{Proof of Theorem~\ref{thm:d1quantization}}
Notice that $m$ in the theorem is the maximal possible number of disjoint intervals that
compose our random set $S$.

\begin{proof} Let us start with the case $c<1/2$.

{\it Upper bound.} Let $\eps \in (0,1)$ and let $G_\eps:=\{\eps,2\eps...,
\lfloor \eps^{-1}\rfloor\eps \}$
be the corresponding grid.
We produce a dictionary
\[
   \CC:= \left\{ \bigcup_{j=1}^k [a_j,b_j], \ 1\le k\le m, a_j\le b_j, a_j,b_j\in
G_\eps
   \right\} \cup \{\emptyset\}.
\]

If our random set $S$ is not empty, it is a union of $k$ intervals with  $1\le k\le m$
and we always have a set $C\in \CC$ such that $d_H(S,C)\le \eps$.

On the other hand, we have
\[
 \#\CC  \le \sum_{k= 1}^m \lfloor\eps^{-1}\rfloor^{2k} \le   m \, \eps^{-2m}.
\]
For given large $r$, we choose $\eps$ from equation $m\,\eps^{-2m}=\exp(r)$, i.e.
$\eps:= m^{1/2m} e^{-r/2m}$. We conclude with the required bound
\[
  D^{(q)}(r) \leq  m^{1/2m} e^{-r/2m} :=c_2 \, e^{-r/2m}.
\]
\medskip

{\it Lower bound.} By the definition of $m$ there exists a sufficiently small
$\delta$ such that
$m(2c+4\delta)<1$. Therefore, we may place  $m$ disjoint intervals
$[ z_k- (c+2\delta), z_k+(c+2\delta)]$, $1\le k\le m$, into $[0,1]$. In the
following, we will consider the case when
$S$ is a union of pairwise overlapping $2m$ intervals of length $2c$ with centers
$\xi_{2k-1}\in [z_k-\delta,z_k]$ and $\xi_{2k}\in [z_k,z_k+\delta]$, for $1\le k\le
m$.
We have
\[
    S = \bigcup_{k=1}^m [a_{2k-1}, a_{2k}],
\]
where
\begin{eqnarray*}
    a_{2k-1} &=& \xi_{2k-1} -c \in [z_k- c-\delta,z_k-c],
\\
    a_{2k} &=& \xi_{2k} +c \in [z_k+ c,z_k+c +\delta].
\end{eqnarray*}

Moreover, the random points $a_k$, $1\le k\le 2m$, are independently
and uniformly distributed on the corresponding
intervals. We will denote $a:=(a_k)_{1\le k\le 2m}$ the corresponding
$2m$-dimensional random vector uniformly distributed on a cube of side
length $\delta$.

Let us now fix a non-random closed set $C\subset [0,1]$ and  build
a $2m$-dimensional deterministic vector $b=b(C):=(b_k)_{1\le k\le 2m}$ by
\begin{eqnarray*}
    b_{2k-1} &=& \min \{x\big| x\in C\cap [z_k- (c+2\delta),z_k+(c+2\delta)],
\\
    b_{2k} &=& \max \{x\big| x\in C\cap [z_k- (c+2\delta),z_k+(c+2\delta)].
\end{eqnarray*}
Assume first that all sets $C\cap [z_k- (c+2\delta),z_k+(c+2\delta)]$
are non-empty, thus $B$ is well defined. The main observation is as follows:
we have
\[
   d_H(S,C)\ge \min \{ ||a-b||_\infty; \delta\} \ge \delta \cdot ||a-b||_\infty,
\]
where the minimum with $\delta$ appears because of possible points in $C$
outside each interval $[z_k-(c-2\delta),z_k+(c+2\delta)]$. Similarly, if the set
$C\cap [z_k- (c+2\delta),z_k+(c+2\delta)]$ is empty
for some $k\le m$, we simply have
\[
   d_H(S,C) \ge \delta  \ge  \delta \cdot ||a-b||_\infty.
\]
For any dictionary $\CC$ we thus have
\[
   \E \min_{C\in \CC} d_H(S,C) \ge  \delta \ \E \min_{C\in \CC} ||a-b(C)||_\infty.
\]
By using the well known bound for the quantization error of finite-dimensional
vectors uniformly distributed on cubes (see e.g.\ Lemma~22 in \cite{vormoor} or \cite{lp}), it follows immediately that
\[
   D^{(q)}(r) \ge \delta\,  D^{(q)}(a,\|.\|_\infty;r)\ge  c_1 \, e^{-r/2m},
\]
where $c_1$ depends on $m$ and on $\delta$.

{\it The case $c\geq 1/2$.} For this case, the result is the same as in \eqref{d1c} with
$2m$ replaced by $1$. The reason is that in this case $S$ consists of a unique
interval with only {\it one} random end.
 \end{proof}

\subsection{Proof of the upper bound in Theorem~\ref{thm:coding1}}
The upper bound follows from the next general lemma, which relates an upper bound
for the asymptotics of $\P[K\geq n]$ for $n\to\infty$ to the upper bound for
the quantization error.

\begin{lem}  \label{lem:quantuppergeneral}
Assume that for some $a>0$
$$
   \P[K\geq n] \leq \exp( - a \cdot n \log n (1+o(1)) ),
   \qquad \text{as $n\to\infty$}.
$$
Then
\be \label{lem19_gen}
  D^{(q)}(r)  \leq \exp\left( -\sqrt{ \frac{2 a}{d+1}\, r \log r } \cdot (1+o(1))\right),
   \qquad \text{as $r\to\infty$}.
\ee
Moreover, if the radius is constant, then
\be \label{lem19_Rconst}
   D^{(q)}(r)  \leq \exp\left( -\sqrt{ \frac{2 a}{d}\, r \log r } \cdot (1+o(1))\right),
   \qquad \text{as $r\to\infty$}.
\ee
\end{lem}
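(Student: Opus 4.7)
I will construct a deterministic codebook whose codewords are unions of up to $n$ balls with centers and radii on an $\eps$-grid, bound the expected Hausdorff distance by splitting on the event $\{K\le n\}$, and then optimise $n$ and $\eps$ subject to $\#\CC\le e^r$. The tail hypothesis on $K$ will play the role of controlling the overshoot event $\{K>n\}$.

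\textbf{Codebook.} Fix $n\in\N$, $\eps\in(0,1)$, and a truncation level $M>0$ for the radii. Let $\CC=\CC(n,\eps,M)$ consist of the empty set together with all finite unions $\bigcup_{s=1}^k B(x_s,r_s)\cap[0,1]^d$ for $0\le k\le n$, where each center $x_s$ lies on an $\eps$-grid of $[0,1]^d$ and each radius $r_s$ on an $\eps$-grid of $[0,M]$. Up to polynomial factors one has $\#\CC\le\eps^{-(d+1)n}$; in the constant-radius case the radius-grid collapses and $\#\CC\le\eps^{-dn}$. The constraint $\#\CC\le e^r$ thus reads $(d+1)n\log(1/\eps)\le r(1+o(1))$, respectively $dn\log(1/\eps)\le r(1+o(1))$.

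\textbf{Error control.} The elementary identity $d_H(B(x,r),B(x',r'))=\|x-x'\|+|r-r'|$, together with monotonicity of $d_H$ under intersection with $[0,1]^d$ and the union bound $d_H(\bigcup_iA_i,\bigcup_iA_i')\le\max_id_H(A_i,A_i')$, shows that on the event $\{K\le n\}\cap\{\max_{i\le N}R_i\le M\}$ rounding each effective ball's centre and radius to the grid produces a codeword $A\in\CC$ with $d_H(S,A)\le c\eps$. On the complementary event we use the default codeword $\emptyset\in\CC$, paying at most $D:=\mathrm{diam}([0,1]^d)$. Altogether
\[
  \E\min_{A\in\CC}d_H(S,A)\le c\eps+D\,\P[K>n]+D\,\P\bigl[\max_{i\le N}R_i>M\bigr],
\]
and letting $M=M(r)\to\infty$ slowly renders the last term sub-exponentially small compared with the others.

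\textbf{Balancing and constant-radius variant.} Invoking $\P[K>n]\le\exp(-an\log n(1+o(1)))$, I balance the two remaining terms by choosing $n,\eps$ so that $\log(1/\eps)\sim an\log n$; substituting into the codebook constraint gives $a(d+1)n^2\log n\sim r$. Solving this self-consistently via $\log n\sim\tfrac12\log r$ yields $n\sim\sqrt{2r/(a(d+1)\log r)}$, and the resulting common exponential rate is $\sqrt{\tfrac{2a}{d+1}r\log r}(1+o(1))$, which is \eqref{lem19_gen}; the constant-radius case \eqref{lem19_Rconst} follows by repeating the argument with $d+1$ replaced by $d$. The main technical difficulty is the careful joint optimisation of $n$ and $\eps$ under the transcendental condition $n^2\log n\asymp r/(a(d+1))$, and tuning the truncation level $M=M(r)$ so that the contribution of $\{\max R_i>M\}$ is genuinely of lower order than the principal exponential rate.
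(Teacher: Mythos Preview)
Your construction is sound, but the final optimisation contains an arithmetic slip that hides a genuine methodological gap. With the constraint $(d+1)n\log(1/\eps)=r$ you have $\eps=\exp(-r/((d+1)n))$, and balancing against $\P[K>n]\le\exp(-an\log n)$ forces $r/((d+1)n)=an\log n$, i.e.\ $r=a(d+1)n^2\log n$. At this $n$ the common exponent is $an\log n$, and
\[
   (an\log n)^2 = a\cdot(an^2\log n)\cdot\log n
   = a\cdot\frac{r}{d+1}\cdot\log n
   \sim \frac{a}{2(d+1)}\,r\log r,
\]
so $an\log n\sim\tfrac12\sqrt{\tfrac{2a}{d+1}\,r\log r}$, exactly half of what you assert. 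Your single-threshold codebook therefore only yields
\[
   D^{(q)}(r)\le\exp\Bigl(-\tfrac12\sqrt{\tfrac{2a}{d+1}\,r\log r}\,(1+o(1))\Bigr),
\]
which is strictly weaker than \eqref{lem19_gen}.

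The loss is structural, not arithmetical: in your bound $c\eps+D\,\P[K>n]$ the grid error and the tail probability are combined \emph{additively}, so the final rate is governed by the larger of the two exponents. The paper instead builds a separate sub-codebook $\CC_k^\circ$ for every value $k\le r$ of $K$, spending a budget $e^{r-k}$ on level $k$, so that conditionally on $\{K=k\}$ the error is $\le c\,e^{-r/((d+1)k)}$. Taking expectations then gives
\[
   D^{(q)}(r)\le c\,\E\,e^{-r/((d+1)K)}
   \le c\sum_k \P[K=k]\,e^{-r/((d+1)k)},
\]
where the tail decay $e^{-ak\log k}$ and the approximation exponent $r/((d+1)k)$ enter \emph{multiplicatively}. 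At the optimal $k$ they are equal and their sum is $2ak\log k$, recovering the missing factor of two. (The paper then finishes with a Tauberian theorem for $\E e^{-r/(cK)}$.) To repair your argument you must likewise stratify the codebook over all possible values of $K$, not cut off at a single $n$.
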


\begin{proof}[ of Lemma~\ref{lem:quantuppergeneral}]
The proof relies on the following coding strategy. Recall that $K$ is the number of balls
needed in order to produce the random picture $S$ {\it without any error}. We shall encode
the positions and radii of these $K$ balls {\it approximately} and thus retrieve the picture
$S$ approximately. In particular, if $K=k$,
then our random picture admits a representation
\[
 S= \bigcup_{i=1}^k B(\txi_i,\tR_i)
\]
and one has to encode a vector
in $[0,1]^{d k}\times \R_+^k$ (the centres $\txi_i$, $i=1,\ldots,k$ and the radii
$\tR_i$, $i=1,\ldots,k$). It is clear that one does not have to encode radii that are larger
than the diameter $\di$ of the unite cube  $[0,1]^d$, as in that case the picture is
trivial, i.e. $S=[0,1]^d$.

Note that Lemma~22 in \cite{vormoor} (also see \cite{lp}) gives an explicit bound on the
quantization error in $[0,1]^\ell$ with respect to $\ell_\infty$-norm, namely,
for any random element
$X\in [0,1]^\ell $ and any $\rho>0$ there exists a dictionary $\CC(\ell,\rho)$ such that
$\# \CC(\ell,\rho) \leq e^{\rho}$ and
\be \label{cubequant}
    \E \min_{x\in \CC(\ell,\rho)} || X - x ||_\infty \leq e^{-\rho/\ell}.
\ee

Fix $k$ with $1\leq k\leq r$.
Using \eqref{cubequant} with $\ell=(d+1)k$, $\rho:=r-k$,  we can choose sets
$\CC_k=\{ (p_i^j,r_i^j)_{i=1,\ldots,k},j=1,\ldots,\# \CC_k \}$ with $\# \CC_k \leq e^{r-k}$
such that
\be \label{cenradquant}
 \E \left[ \min_j || (\txi_i,\tR_i/\di)_{i=1}^k
                     - (p_i^j,r_i^j/\di)_{i=1}^k ||_\infty
    \Big| K=k \right]
 \leq  e^{-(r-k)/(k(d+1))}.
\ee
Furthermore, we build a sub-dictionary of pictures,
\[
  \CC^\circ_k= \left\{  \bigcup_{i=1}^k  B(p_i^j,r_i^j), j=1,\ldots,\# \CC_k
               \right\}.
\]
Set $\CC^\circ_0:=\{[0,1]^d\}$ and define the full dictionary by
$\CC^\circ:=\bigcup_{k=0}^r \CC^\circ_k$.
Then $\#\CC^\circ \leq 1 + \sum_{k=1}^r e^{r-k} \leq e^r$ for large enough $r$.

On the other hand, we may use the following result.
\begin{lem} \label{lem:dHBB}
  For any norm $||.||$ on $\R^d$, the respective
balls $B(\cdot,\cdot)$ and the respective Hausdorff distance, we have
for all centers $x,x'\in \R^d$ and radii $r,r'>0$
\be \label{dHBB}
   d_H(B(x,r),B(x',r')) =  ||x-x'|| + |r-r'|.
\ee
\end{lem}

\begin{proof}[ of Lemma~\ref{lem:dHBB}]
The upper bound follows from the triangle inequality in Hausdorff distance,
\begin{eqnarray*}
   d_H(B(x,r),B(x',r')) &\leq&   d_H(B(x,r),B(x',r)) +  d_H(B(x',r),B(x',r'))
\\
   &\leq& ||x-x'|| + |r-r'|.
\end{eqnarray*}
The lower bound follows from the triangle inequality in $\R^d$.
Let $v\in B(x,r)$ be an arbitrary point and choose  $v'\in B(x',r')$
such that $x,x',v'$ lay on the same line, $||x'-v'||=r'$, and $x'$
is situated between $x$ and $v'$. Then
\begin{eqnarray*}
   r+||v-v'|| &\geq& ||x-v|| + ||v-v'|| \geq ||x-v'||
 \\
  &=& ||x-x'||+ ||x'-v'|| =  ||x-x'||+ r',
\end{eqnarray*}
which implies $||v-v'||\geq ||x-x'||+ r'-r$. Therefore,
\[
    d_H(B(x,r),B(x',r'))\geq \inf_{v\in B(x,r)} ||v-v'||\geq ||x-x'||+ r'-r.
\]
By the full symmetry, we obtain
\[
    d_H(B(x,r),B(x',r'))\geq ||x-x'||+ |r'-r|,
\]
as required.
\end{proof}

Furthermore, \eqref{dHBB} yields an upper estimate of the same type
for the unions of balls, namely,
\[
    d_H\left( \bigcup_{i=1}^k B(x_i,r_i),\bigcup_{i=1}^k B(x'_i,r'_i)\right)
    \leq \max_{1 \leq i \leq k}  [\, ||x_i-x_i'|| + |r_i-r_i'| \,] .
\]
Applying this estimate to any element of $\CC^\circ_k$ and to $S$,
we obtain
\begin{eqnarray*}
    d_H\left(  \bigcup_{i=1}^k  B(p_i^j,r_i^j ), S \right)
    &=&  d_H\left( \bigcup_{i=1}^k  B(p_i^j,r_i^j),  \bigcup_{i=1}^k B(\txi_i,\tR_i) \right)
\\
    &\leq& \max_{1 \leq i \leq k}  [\, ||\txi_i -p_i^j|| + |\tR_i -r_i^j| \,]
\\
    &\leq& \max_{1 \leq i \leq k}  \left[\,  d \di\, ||\txi_i-p_i^j||_\infty
    + \di\, \left|\frac{\tR_i}{\di} -\frac{r_i^j}{\di}\right| \,\right]
\\
     &\leq&  (d+1) \di \,
     || (\txi_i,\tR_i/\di)_{i=1}^k - (p_i^j,r_i^j/\di)_{i=1}^k ||_\infty.
\end{eqnarray*}

By \eqref{cenradquant}  we obtain
\begin{eqnarray*}
    && \E\left[ \min_{C\in \CC_k} d_H(C,S) \Big| K=k\right]
\\
    &\leq&   (d+1) \di \,
            \E \left[ \min_j
        || (\xi_i,R_i/\di)_{i=1}^k - (p_i^j,r^j/\di)_{i=1}^k ||_\infty
         \Big| K=k\right]
\\
    &\leq&  (d+1) \di \cdot e^{-(r-k)/(k(d+1))}.
\end{eqnarray*}

Thus, we obtain
\begin{eqnarray*}
    D^{(q)}(r) & \leq & \E \min_{C\in\CC} d_H(C,S)
\\
    &= & \sum_{k=0}^r \P[K=k] \cdot \E [\min_{C\in\CC} d_H(C,S) | K= k]
\\
    && + \sum_{k=r}^\infty \P[K=k] \cdot \E [\min_{C\in\CC} d_H(C,S) | K= k]
\\
    &\leq & \sum_{k=0}^r \P[K=k] \cdot \E [\min_{C\in\CC_k} d_H(C,S) | K= k]
    + \sum_{k=r+1}^\infty \P[K=k] \cdot 1
\\
    &\leq &   (d+1) \di \, \sum_{k=0}^r \P[K=k]  e^{-(r-k)/(k(d+1))}
\\
    && + e^{1/(d+1)}\sum_{k=r+1}^\infty \P[K=k] \cdot e^{-r/(k(d+1))}
\\
    &\leq &  ( (d+1) \di+e^{1/(d+1)})   \sum_{k=0}^\infty \P[K=k]  e^{-r/(k(d+1))}
\\
    &\leq & ( (d+1) \di+e^{1/(d+1)}) \, \E e^{-r/(K(d+1))}.
\end{eqnarray*}

By a standard Tauberian theorem (cf.\ \cite{bgt}, Theorem 4.12.9), the assumption
$\P[K\geq k]\leq \exp(-a k \log k (1+o(1)))$, as $k\to\infty$ of our lemma
gives the conclusion \eqref{lem19_gen}.
Moreover, if the radius is constant, the same reasoning applies with $\ell=dk$
instead of $\ell=(d+1)k$ and we arrive at \eqref{lem19_Rconst}.
\end{proof}

\subsection{Proof of the lower bound in Theorem~\ref{thm:coding1}} In the following,
$c$ is used for a constant not depending on $r$, $n$ that may change at each occurrence.

Consider the following collection of boxes:
$$
    \prod_{m=1}^d \left[ \frac{k_m}{(2n)^{1/d}}+\frac{1/4}{(2n)^{1/d}},
    \frac{k_m}{(2n)^{1/d}}+\frac{3/4}{(2n)^{1/d}}\right],
    \qquad k_m\in\{0,\ldots,\lfloor (2n)^{1/d}\rfloor-1\}.
$$
The number of boxes being of order $2n$, we may choose among them $n$ distinct boxes, say $V_1, \ldots, V_n$.
In the sequel we will also need larger boxes $W_1,\ldots,W_n$ from the collection
$$
   \prod_{m=1}^d \left[ \frac{k_m}{(2n)^{1/d}},
       \frac{k_m+1}{(2n)^{1/d}}\right],\qquad
       k_m\in\{0,\ldots,\lfloor (2n)^{1/d}\rfloor-1\}.
$$
such that $W_i\supseteq V_i$ for each $i$.

Define the following event:
$$
     E:=\bigcup_{\text{$\pi$ permutation of } \{1,\ldots,n\} }
     E_\pi,
$$
where
\[
  E_\pi := \left\{ N=n, \xi_i\in V_{\pi(i)},
     i=1,\ldots,n , R_i \in [c_1 \theta n^{-1/d}, c_2 \theta n^{-1/d}]
     \right\},
\]
$c_2:=2^{-3-1/d}$, $c_1:=c_2/2>0$, and $\theta>0$ is chosen so that
$||x||\geq \theta ||x||_\infty$ for all $x\in \R^d$.

In view of the lower bound in the assumption on the density $p$,
the probability of this event admits the following bound:
\begin{eqnarray}
\P[E]
     &=& \frac{\lambda^n}{n!} e^{-\lambda} \cdot n!
        \cdot  \left( \frac{1/2}{(2n)^{1/d}} \right)^{dn}
        \cdot \P[ R_1 \in [c_1 n^{-1/d}, c_2 n^{-1/d}] ]^n
        \notag
\\
    &=& (\lambda/2^{d+1})^n e^{-\lambda} n^{-n} \cdot
        \left(\int_{c_1 n^{-1/d}}^{c_2 n^{-1/d}} p(z) \dd z\right)^n
        \notag
\\
   &\geq& (\lambda/2^{d+1})^n e^{-\lambda} n^{-n}
   \cdot \left( c \, n^{-\alpha/d} \right)^n
   \notag
\\  \label{eqn:probabofE}
   &=& \exp( - (1+\alpha/d) n \log n \cdot(1+o(1))).
\end{eqnarray}

Consider
\begin{eqnarray*}
D^{(q)}(r) &=& \inf_{\#\CC\leq e^r} \E[ \min_{C\in\CC} d_H(C,S) ]
\\
&\geq& \inf_{\#\CC\leq e^r} \E[ \min_{C\in\CC} d_H(C,S) \ind_E ]
\\
&=& \inf_{\#\CC\leq e^r} \E[ \min_{C\in\CC} d_H(C,S) |E ] \cdot \P[E].
\end{eqnarray*}
Further, denoting by $\KKK$ the set of all measurable subsets of $[0,1]^d$
note that for any dictionary $\CC$ with $\#\CC\leq e^r$
and any $\delta>0$
\begin{eqnarray*}
   \E[ \min_{C\in\CC} d_H(C,S) |E ] &\geq& \delta \cdot
   \P[ \forall C\in\CC : d_H(C,S) \geq \delta |E]
\\
   &=& \delta \cdot (1- \P[ \exists C\in\CC : d_H(C,S) < \delta |E])
\\
   &\geq & \delta \cdot (1- \#\CC \cdot \sup_{C\in\KKK} \P[ d_H(C,S) < \delta |E]).
\\
   &\geq & \delta \cdot (1- e^r \cdot \sup_{C\in\KKK} \P[ d_H(C,S) < \delta |E])
\\
   &\geq & \delta \cdot (1- e^r \cdot \sup_{C\in\KKK}  \sup_\pi\,
   \P[ d_H(C,S) < \delta |E_\pi]).
\end{eqnarray*}

Combining this bound together with the last estimate yields
\be \label{eqn:Dqr}
     D^{(q)}(r) \geq \P[E] \cdot \delta
     \cdot (1- e^r \sup_{C\in\KKK}  \sup_\pi \P[ d_H(C,S) < \delta |E_\pi]).
\ee

Now we estimate $\P[ d_H(C,S) < \delta |E_\pi]$ for a fixed set $C$,
fixed permutation $\pi$ and
\be \label{eqn:delta}
   \delta< \frac{\theta}{8(2n)^{1/d}}.
\ee
%%The facts that, under $E$,
%%$$
%%   R_i \leq c_2 n^{-1/d} = \frac{1/8}{(2n)^{1/d}}
%%   \qquad\text{and}\qquad d(V_i,\partial W_i)=\frac{1/4}{(2n)^{1/d}}
%%$$

We first show that under $E_\pi$ for each $i\leq n$ one has
\be \label{BinW}
    B(\xi_i,R_i+\delta) \subset W_{\pi(i)}.
\ee
Indeed, if $x\in B(\xi_i,R_i+\delta)$, then
\[
   ||x-\xi_i||_\infty \leq \theta^{-1} ||x-\xi_i|| \leq \theta^{-1} (R_i+\delta)
    \leq c_2 n^{-1/d}+\delta/\theta  \leq 2^{-2}(2n)^{1/d}.
\]
Since $\xi_i\in V_{\pi(i)}$, we obtain $x\in W_{\pi(i)}$ and \eqref{BinW} follows.

We see from \eqref{BinW} that all balls in the representation
\[
   S=\bigcup_{i=1}^n  B(\xi_i,R_i)
\]
are not only disjoint but $\delta$-separated. Therefore, if
$d_H(C,S)<\delta$, then  $C\cap W_{\pi(i)} \neq \emptyset$ for all $i$ and so
\[
    d_H(C\cap W_{\pi(i)},S \cap W_{\pi(i)})
     =  d_H(C\cap W_{\pi(i)},B(\xi_i,R_i)) < \delta.
\]
Since $C$ is deterministic, when $\pi$ is fixed there exists a deterministic ball
$B(x_i,r_i)$ such that $d_H(C\cap W_{\pi(i)},B(x_i,r_i)) < \delta$.

Indeed, let $U$ (here $U=C\cap W_{\pi(i)}$ for short) be a deterministic set
such that $\P[ d_H (U, B(x(\omega), r(\omega))) <\delta] > 0$.
Take a countable set of balls $(B(x_k,r_k))_{k\in \N}$ which is $d_H$-dense
in the set of all balls. We clearly have
\begin{eqnarray*}
  \sum_{k\in \N} \P[d_H (U, B(x_k, r_k)) <\delta] &\geq& \P( \inf_k  d_H (U, B(x_k, r_k)) <\delta)
\\
  &\geq& \P[ d_H (U, B(x(\omega), r(\omega))) <\delta] > 0.
\end{eqnarray*}
Obviously, there exists some $k\in\N$ such that  $\P[d_H (U, B(x_k, r_k)) <\delta]>0$.

But both sets, $U$ and $B(x_k,r_k)$, are deterministic. Therefore, we simply have
$d_H (U,  B(x_k, r_k) )<\delta$, as required.

Hence, $d_H(C,S)<\delta$ yields, by the triangle inequality,
\begin{multline*}
   d_H(B(x_i,r_i),B(\xi_i,R_i))
   \\ \leq d_H(B(x_i,r_i),C\cap W_{\pi(i)})+d_H(C\cap W_{\pi(i)},B(\xi_i,R_i))
   < 2\delta.
\end{multline*}

The equality \eqref{dHBB} yields now $||\xi_i-x_i|| \leq 2\delta$
and $||R_i-r_i|| \leq 2\delta$. Recall that $x_i,r_i$ are deterministic
and depend only on $C$ and $\pi$.

Even after conditioning on $E_\pi$, the ensembles of centers $(\xi_i)_{1 \leq i \leq n}$
and  radii $(R_i)_{1 \leq i \leq n}$ remain independent; while  $\xi_i$ is uniformly distributed
on $V_\pi(i)$ and  $R_i$ is distributed on $[c_1 \theta n^{-1/d}, c_2 \theta n^{-1/d}]$
with a density proportional to $p$.

These observations show that
\begin{eqnarray*}
  && \P\left(  d_H(S,C)<\delta \Big| E_\pi\right)
\\
  &\le&
    \P\left( ||\xi_i-x_i|| \leq 2\delta, |R_i-r_i|  \leq 2\delta, 1 \leq i \leq n  \Big| E_\pi\right)
\\
  &=& \prod_{i=1}^{n}  \P\left( ||\xi_i-x_i|| \leq 2\delta \Big| E_\pi\right)
  \cdot
   \prod_{i=1}^{n}  \P\left( |R_i-r_i|  \leq 2\delta \Big| E_\pi\right).
\end{eqnarray*}

We clearly have
\[
   \P\left( ||\xi_i-x_i|| \leq 2\delta \big| E_\pi\right)
   \leq \frac{\Vol(B(0,1))(2\delta)^d}{\Vol(V_1)} =: c\, \delta^d n.
\]
Using the upper bound in the assumption on the density $p$ we obtain
$$
    \P\left[ |R_i-r_i| < 2\delta \big| E_\pi \right]
     \leq c\, \int_{r_i-2\delta}^{r_i+2\delta} z^{\alpha-1} \dd z
        (c\, n^{-\alpha/d})^{-1}
     \leq c\, \delta n^{1/d}.
$$
Hence,
\be \label{eqn:condprob}
    \P\left(  d_H(S,C)<\delta \big| E_\pi\right) \leq
     (c\, \delta^d n)^n \cdot
   (c\, \delta n^{1/d})^n =: c^n\, \delta^{(d+1)n} \, n^{(1+1/d)n}.
\ee

Putting estimates \eqref{eqn:probabofE},\eqref{eqn:Dqr}, \eqref{eqn:condprob}
together yields
$$
    D^{(q)}(r) \geq \exp( - (1+\alpha/d) n \log n (1+o(1)) )
    \cdot \delta \cdot \left( 1 - e^{r}  c^{n}\, \delta^{(d+1)n}
     \, n^{(1+1/d)n} \right).
$$
Now we choose $\delta$ such that
$$
    e^{r} c^{n }\delta^{(d+1)n} n^{(1+1/d)n} = 1/2
$$
and obtain  $\delta = c\, e^{-r/((d+1)n)} n^{-1/d}$, which gives
\begin{eqnarray*}
   D^{(q)}(r)  &\geq& \exp\left( - (1+\alpha/d) n \log n (1+o(1)) -r/((d+1)n)\right)
\\
   &=:& \exp\left( - A n \log n (1+o(1)) -r/(Bn) \right).
\end{eqnarray*}
Now we optimize in $n$ by letting $n\sim \sqrt{\frac{2r}{AB \log r}}$
and obtain
\begin{eqnarray*}
   D^{(q)}(r)  &\geq& \exp\left(- \sqrt{\frac{2Ar \log r}{B}}\ (1+o(1)) \right)
\\
  &=& \exp\left(-\sqrt{\frac{2(1+\alpha/d)}{d+1}\,  r\log r}\ (1+o(1))\right),
\end{eqnarray*}
as required in the assertion of the theorem. It remains to notice that
the choice of $\delta$ agrees with required property  \eqref{eqn:delta}
for large $n$ and $r$.

\subsection{Proof of Theorem~\ref{thm:coding-2-3}, $\ell_1$-balls part}

The upper bound follows from the claim \eqref{lem19_Rconst}
of Lemma \ref{lem:quantuppergeneral}
where we may let $a:=\frac{d}{d-1}$ by Proposition \ref{prop:upperc1}.

For getting the lower bound we use the construction from the proof of
Proposition \ref{prop:lowerc1} and the proof scheme of the lower bound
in Theorem \ref{thm:coding1}. We repeat everything for completeness.

Consider the following collection of boxes:
$$
    \left\{
       \prod_{m=1}^{d-1}
       \left[ \frac{k_m}{(2n)^{1/(d-1)}}+\frac{1/4}{(2n)^{1/(d-1)}},
       \frac{k_m}{(2n)^{1/(d-1)}}+\frac{3/4}{(2n)^{1/(d-1)}}\right]
    \right\}
    \times \left[0,\frac{c_2}{n^{1/(d-1)}}\right],
$$
and the larger tubes
$$
    \left\{
        \prod_{m=1}^{d-1}
        \left[ \frac{k_m}{(2n)^{1/(d-1)}},
        \frac{k_m+1}{(2n)^{1/(d-1)}}\right]
    \right\}
  \times [0,1],
$$
with $k_m\in\{0,\ldots,\lfloor (2n)^{1/(d-1)}\rfloor-1\}$.
Here, $c_2:=2^{-( 4+1/(d-1))}$.

The number of boxes being of order $2n$, we may choose among them $n$ distinct boxes,
say $V_1, \ldots, V_n$ and use the corresponding tubes $U_1,\ldots, U_n$
such that $V_i\subset U_i$, $i=1,\dots,n$.

As before, we consider the event $E$,
$$
     E:=\bigcup_{\text{$\pi$ permutation of } \{1,\ldots,n\} } E_\pi,
$$
where
\[
  E_\pi := \left\{ N=n, \xi_i\in V_{\pi(i)}, i=1,\ldots, n \right\},
\]
and recall from (\ref{eqn:againsameprob}) the bound
\be \label{eqn:probabofE_c1}
    \P[E] \geq \exp\left( - \frac{d}{d-1}\  n \log n (1+o(1)) \right).
\ee

We will use inequality \eqref{eqn:Dqr} with this $E$ and these $E_\pi$.
Note that its derivation does not depend on the concrete event $E$, but it holds
for any event.

As in the previous proof, we have to estimate $\P[ d_H(C,S) < \delta |E_\pi]$
for a fixed set $C$, fixed permutation $\pi$ and small $\delta$, however
using very different geometric arguments.

Recall that notation $c$ is used for a constant not depending on $r$ or $n$
that may change at each occurrence.
Instead of \eqref{eqn:condprob}, we will prove
\be \label{eqn:condprob_1c}
    \P\left(  d_H(S,C)<\delta \big| E_\pi\right)
    \leq
     \left(c\, \delta^d n^{d/(d-1)}\right)^n.
\ee
Putting estimates \eqref{eqn:Dqr}, \eqref{eqn:probabofE_c1}, \eqref{eqn:condprob_1c}
together yields
$$
    D^{(q)}(r) \geq \exp\left( -  \frac{d}{d-1}\ n \log n (1+o(1)) \right)
    \cdot \delta \cdot \left( 1 - e^{r}   \left(c\, \delta^d n^{d/(d-1)}\right)^n  \right).
$$

Now we choose $\delta$ such that
$$
    e^{r}  \left(c\, \delta^d n^{d/(d-1)}\right)^n  = 1/2
$$
and obtain  $\delta = c\, 2^{-1/dn}\, e^{-r/(dn)} n^{-1/(d-1)}$, which gives
\begin{eqnarray*}
   D^{(q)}(r)  &\geq& \exp\left( -  \frac{d}{d-1}\  n \log n (1+o(1))-r/(dn)\right)
\\
   &=:& \exp\left( - A n \log n (1+o(1)) -r/(Bn) \right)
\end{eqnarray*}
with $A:=\frac{d}{d-1}$ and $B=d$. We optimize in $n$ as before, by letting
$n\sim \sqrt{\frac{2r}{AB \log r}}$ and obtain
\begin{eqnarray*}
   D^{(q)}(r)  &\geq& \exp\left(- \sqrt{\frac{2Ar \log r}{B}}\ (1+o(1)) \right)
\\
  &=& \exp\left(-\sqrt{\frac{2}{d-1}\, r\log r}\ (1+o(1))\right),
\end{eqnarray*}
as required in the assertion of the theorem.

%%It remains to notice that
%%the choice of $\delta$ agrees with required property  \eqref{eqn:delta}
%%for large $n$ and $r$.

It remains to prove \eqref{eqn:condprob_1c}. To this aim, we fix a deterministic
set $C$ and a permutation $\pi$.
Assume that
\be \label{eqn:d_H_delta}
  d_H(S,C)<\delta
\ee
with a small $\delta$ such that
\be \label{eqn:delta2}
    \delta< \frac{\theta}{2^6(2n)^{1/(d-1)}}.
\ee
For every $i\le n$ we have the following. Let
\[
  y_i := \textrm{argmax} \{ y^{(d)} | y\in C\cap U_{\pi(i)}\}
\]
be a local top point of $C$ and let
$x_i:=\xi_i+(0,\dots,0,R)$ be the top point of the ball $B(\xi_i,R)$.
We will show that $y_i$ and $x_i$ are close.

First, we prove that
\be \label{eqn:yx}
   y_i^{{(d)}}\ge  x_i^{{(d)}} -\delta.
\ee
Indeed, by \eqref{eqn:d_H_delta} there exists $y\in C$ such that
$||y-x_i||_1\le \delta$.
Using $\xi_i\in V_{\pi(i)}$ and the inequality
$\delta \le \frac{1/4}{(2n)^{1/(d-1)}}$,
we see that  $y\in U_{\pi(i)}$. Hence,
\[
   y_i^{{(d)}}\ge  y^{{(d)}} \ge  x_i^{{(d)}} -  ||y-x_i||_1 \ge  x_i^{{(d)}} -\delta.
\]
\medskip

Second, for any $b\in B(\xi_i,R)$ it is true that
\begin{eqnarray*}
  R-||x_i-b||_1 &\ge& ||b-\xi_i||_1- ||x_i-b||_1
  = |b^{{(d)}}-\xi_i^{{(d)}}| -(x_i^{{(d)}}- b^{{(d)}})
\\
   &\ge& (b^{{(d)}}-\xi_i^{{(d)}}) -(x_i^{{(d)}}- b^{{(d)}})
   = 2(b^{{(d)}}-x_i^{{(d)}})+R,
\end{eqnarray*}
hence,
\be \label{eqn:bx}
   b^{{(d)}}\le x_i^{{(d)}}- ||x_i-b||_1/2.
\ee
\medskip

Third, by (\ref{eqn:d_H_delta}) there exists a $b_i\in S$ such that
$||b_i-y_i||_1\le \delta$.
In particular,
\be  \label{eqn:yb}
   y_i^{{(d)}}  \le b_i^{{(d)}}+ \delta.
\ee
Moreover,  it is true  that $b_i\in B(\xi_i,R)$.
Indeed, assume that $b_i\in B(\xi_j, R)$ for some $j\not= i$.
Then
\begin{eqnarray*}
  b_i^{{(d)}} &\le& x_j^{{(d)}}- ||x_j-b_i||_1/2
  \le  x_i^{{(d)}} + \frac{c_2}{n^{1/(d-1)}}  - (||x_j-y_i||_1-\delta)/2
\\
  &\le&  x_i^{{(d)}} + \frac{c_2}{n^{1/(d-1)}}  - \frac{1}{8(2n)^{1/(d-1)}} +
  \delta/2
\\
   &=&  x_i^{{(d)}}  -  \frac{1}{2^4((2n)^{1/(d-1)}} + \delta/2
  <  x_i^{{(d)}} -2\delta.
\end{eqnarray*}
Here we used  inequality \eqref{eqn:bx} with $b=b_i$ and with $j$ instead of $i$,
the definition of $c_2$, and the bound \eqref{eqn:delta2} for $\delta$.
The result contradicts
\[
   b_i^{{(d)}}\ge  y_i^{{(d)}} -\delta \ge  x_i^{{(d)}} -2\delta
\]
and we see that $b_i\in B(\xi_j, R)$, for $j\not= i$, is impossible.
\medskip

Fourth,
by applying \eqref{eqn:bx} to $b=b_i$ and combining with \eqref{eqn:yb},
it follows that
\[
  y_i^{{(d)}}  \le x_i^{{(d)}}- ||x_i-b_i||_1/2 + \delta.
\]
 By comparing this inequality with \eqref{eqn:yx} we obtain
$||x_i-b_i||_1 \le 4\delta$,
 and so by the definition of $b_i$ we get $||x_i-y_i||_1 \le 5\delta$.
The latter is equivalent to
$||\xi_i-z_i||_1 \le 5\delta$, where a deterministic point $z_i$
is defined by $z_i:=y_i-(0,\dots,0,R)$.

These observations show that
\begin{multline*}
  \P\left(  d_H(S,C)<\delta \Big| E_\pi\right)
  \le
    \P\left( ||\xi_i-z_i||_1 \leq 5\delta, 1 \leq i \leq n  \Big| E_\pi\right)
 \\= \prod_{i=1}^{n}  \P\left( ||\xi_i-z_i||_1 \leq 5\delta \Big| E_\pi\right),
\end{multline*}
and using that $\xi_i$ is uniformly distributed in $V_{\pi(i)}$ on $E_\pi$ we get
\[
   \P\left( ||\xi_i-z_i||_1 \leq 5\delta \big| E_\pi\right)
   \leq \frac{\Vol(B(0,1))(5\delta)^d}{\Vol(V_1)} =: c\, \delta^d n^{d/(d-1)}.
\]
Hence,
\[
    \P\left(  d_H(S,C)<\delta \big| E_\pi\right)
    \leq \left(c\,\delta^{d} \, n^{d/(d-1)}\right)^n,
\]
as required in \eqref{eqn:condprob_1c}. \hfill $\Box$

\subsection{Proof of Theorem~\ref{thm:coding-2-3}, $\ell_2$-balls part}

This proof closely follows the previous one with two minor changes.
For getting the upper bound, we refer to Proposition \ref{prop:upperc2}
instead of Proposition \ref{prop:upperc1}.
For getting the lower bound we use the construction from the proof of
Proposition \ref{prop:lowerc2} instead of Proposition \ref{prop:lowerc1}.

Furthermore, the geometric properties of the $\ell_2$-norm come into play.
We must use inequality
\[
   b^{{(d)}}\le x_i^{{(d)}}- ||x_i-b||_2^2/(2R_1).
\]
instead of \eqref{eqn:bx}. All other arguments go through exactly as before.
\hfill $\Box$

\bigskip
{\bf Acknowledgement.} This research was supported by the Russian Foundation 
Basic Research grant 16-01-00258 and by the co-ordinated
grants of DFG (GO420/6-1) and St.\ Petersburg State University (6.65.37.2017).

\bibliographystyle{abbrv}

\noindent {\bf Addresses of the authors:}\\
Frank Aurzada, Technische Universit\"at Darmstadt, Schlossgartenstra\ss e 7, 64287 Darmstadt, Germany\\
Mikhail Lifshits, St.\ Petersburg State University, 197372 St.\ Petersburg, Postbox 104, Russia

\end{document}